\definecolor{dpg}{RGB}{105,186,72}
\definecolor{refkey}{gray}{.5}   
\definecolor{labelkey}{gray}{.5} 
\definecolor{Red}{rgb}{1,0,0}
		\newcommand{\wt}{\widetilde}
\newtheorem{theorem}{Theorem}[section]
\newtheorem{proposition}[theorem]{Proposition}
\newtheorem{lemma}[theorem]{Lemma}
\newtheorem{corollary}[theorem]{Corollary}
\newtheorem{question}[theorem]{Question}
\theoremstyle{definition}
\newtheorem{remark}[theorem]{Remark}
\newtheorem{example}[theorem]{Example}
\newtheorem{definition}[theorem]{Definition}
	\newcommand{\BN}{\mbox{$\mathbb N$}}
\newcommand{\BQ}{\mbox{$\mathbb Q$}}	\newcommand{\BR}{\mbox{$\mathbb R$}}
	\newcommand{\CN}{\mbox{$\mathcal N$}}
	\newcommand{\CP}{\mbox{$\mathcal P$}}
\title{Descent problem for certificate of non-negativity on semi-algebraic sets}
\author{Manoj K. Keshari, Debapriya Ojha and Niladri Sekhar Patra}
\newcommand{\Addresses}{{
  \bigskip
  \footnotesize

\textsc{Manoj K. Keshari, Department of Mathematics, IIT Bombay
            Mumbai 400076, INDIA}\par\nopagebreak
  \textit{E-mail:} \texttt{<keshari@math.iitb.ac.in>}

\medskip
  
 \textsc{Debapriya Ojha, Department of Mathematics, IIT Bombay             Mumbai 400076, INDIA}\par\nopagebreak
 \textit{E-mail:}  \texttt{<debapriya.ojha1908@gmail.com>, <22D0788@iitb.ac.in>}

 \medskip
 
 \textsc{Niladri Sekhar Patra, Department of Mathematics, IIT Bombay          Mumbai 400076, INDIA}\par\nopagebreak
  \textit{E-mail:}  \texttt{<nilubanshichak@gmail.com>, <23D0790@iitb.ac.in>}

\medskip
  }}
\begin{document}
\maketitle
\subjclass 2020 Mathematics Subject Classification:{14P05, 14P10, 11E25}

 \keywords {Keywords:}~ {Sum of squares, basic closed semi-algebraic set, preordering, quadratic module}

\begin{abstract}
Let $F$ be a subfield of $\BR$ and let $K$ be a basic closed semi-algebraic set in $\BR$ with $\partial K\subset F$. Let $\CN$ be the natural choice of generators of $K$. We show that if $f\in F[x]$ is $\geq 0$ on $K$, then $f$ can be written as    $$f=\sum_{e\in\{0,1\}^s } a_e\sigma_e g^e,$$
         where $a_e\in F_{\geq 0}$, $\sigma_e\in \sum F[x]^2$ and  $g^{e}=g_1^{e_1} 
 \cdots g_s^{e_s}$ (see \ref{1 thm}). In other words, the preordering $T_{\mathcal N}$ of $F[x]$ is saturated. In case $F=\BR$, this result is due to Kuhlmann and Marshall \cite[Theorem 2.2]{Sal1}. As an application, we prove that
     if $K$ is compact, then
       $M_{\mathcal N}=T_{\mathcal N}=Pos(K)$ (see \ref{M=T}). In other words, the quadratic module $M_{\mathcal N}$ of $F[x]$ is saturated.
\end{abstract}
\vskip0.50in

\section{Introduction}

Let $F$ be a subfield of $\BR$. We will write $F[x]$ for the polynomial ring in one variable and $F[x_1,\ldots,x_n]$ for the polynomial ring in $n$ variables. Let $\CP$ be one of the following properties : saturation of preordering and quadratic module, archimedean preordering and quadratic module, Schm\"{u}dgen's and Putinar's property. In this paper, we will discuss the following descent problem. If 
$f\in F[x]$ or $F[x_1,\ldots,x_n]$ has property $\CP$ over $\BR$, then whether  $f$ has property $\CP$ over $F$. 

Let us begin with some notations.

\begin{definition} 
Let $A$ denotes $F[x]$ resp. $F[x_1,\ldots,x_n]$ and $A\otimes \BR$ denotes $\BR[x]$ resp. $\BR[x_1,\ldots,x_n]$. Let $g_0=1$ and $S=\{g_1,\ldots,g_s\}$ be a finite subset of $A$. Let $K$ be a closed set of $\BR$ resp. $\BR^n$.
\begin{enumerate}
    \item Let $T_S$ resp. $T_S\otimes \BR$ be
     the preordering of $A$ resp. $A\otimes \BR$ generated by $S$. It is defined as 
    $$T_S=\sum_{e\in\{0,1\}^s} F_{\geq 0} A^2 g^e\hspace{.4in}
     \text{,}\hspace{.4in} T_S\otimes \BR=\sum_{e\in\{0,1\}^s}  (A\otimes \BR)^2 g^e$$ where $g^e=g_1^{e_1}\ldots g_s^{e_s}$. 

\item Let $M_S$ resp. $M_S\otimes \BR$ be
     the quadratic module of $A$ resp. $A\otimes \BR$ generated by $S$. It is defined as
     $$M_S=\sum_0^s F_{\geq 0} A^2 g_i \hspace{.4in} \text{,}\hspace{.4in} M_S\otimes \BR=\sum_0^s  (A\otimes \BR)^2 g_i$$

\item Let $K_S$ be the basic closed semi-algebraic set in $\BR$ resp. $\BR^n$ defined by the non-negativity of $S$ as follows.
$$K_S=\{x\in \BR \hspace{.3in}\text{resp.}\hspace{.3in}\BR^n~~:~~g_i(x)\geq 0,~i=1,\ldots,s\}$$ 

\item  Let $Pos(K)$ resp. $Pos(K)\otimes \BR$ be the set of all polynomials in $A$ resp. $A\otimes \BR$ which are non-negative on $K$, i.e. 
$$\hspace{.2in}
Pos (K)=\{f\in A~:~f\geq 0~~  \text{on}~~ K\} \hspace{.1in} \text{,}\hspace{.1in}  Pos (K)\otimes \BR=\{f\in A\otimes \BR[x]~:~f\geq 0~~  \text{on}~~ K\}$$ 

\item Let $K^\circ$  and $\partial K$ denote the interior and boundary of $K$ resp.

\item Let $K\neq \mathbb R$ be a finite union of closed intervals in $\BR$ with $\partial K\subset F$. Then $K=K_{\mathcal{N}}$, where $\mathcal{N}$ is the {\it natural choice of generators} of $K$ consisting of the following elements of $F[x]$.

\begin{enumerate}
\item If $a\in \partial K$ and $(-\infty,a)\cap K=\emptyset$, then $x-a\in \mathcal{N}$.
\item If $a\in \partial K$ and $(a,\infty)\cap K=\emptyset$, then $a-x\in \mathcal{N}$.
\item If $a,b\in \partial K$, $a<b$ and $(a,b)\cap K=\emptyset$, then $(x-a)(x-b)\in \mathcal{N}$.
\end{enumerate}
For $K=\mathbb R$, we take $\mathcal{N}=\{1\}$.
\end{enumerate}

\end{definition}

Now we will describe the results considered in the paper.

\subsection{} Let $\CP_1$ denote  the saturation property for the preordering $T_S$ with $S=\{1\}$ and $A=F[x]$, i.e. whether $T_{\{1\}}=Pos(K_{\{1\}})$ holds. Here, $T_{\{1\}}=\sum F_{\geq 0} F[x]^2$, $T_{\{1\}}\otimes \BR=\sum \BR[x]^2$ and $K_{\{1\}}=\BR$.

Let $f\in \BR[x]$ be $\geq 0$ on $\BR$. Then it is easy to see that $f=f_1^2+f_2^2$ with $f_i\in \BR[x]$. Thus $T_{\{1\}}\otimes \BR=Pos(\BR)\otimes \BR$ and saturation property $\CP_1$ holds over $\BR$.

Landau \cite{L} proved that $T_{\{1\}}=Pos(\BR)$ in case $F=\BQ$, i.e. saturation property $\CP_1$ descends over $\BQ$. Infact, he showed that any $f\in \BQ[x]\cap Pos(\BR)$ is a sum of $8$ squares in $\BQ[x]$. Pourchet \cite{P} improved this result by showing that any $f\in \BQ[x]\cap Pos(\BR)$ is a sum of $5$ squares.
Magron, El Din and Schweighofer \cite[Prop 12]{Sch1} gave an algorithmic proof in the general case and proved that saturation property $\CP_1$ descends over $F$. More precisely, they proved that any $f\in F[x] \cap Pos(\BR)$ has a representation 
$$f=\sum_1^{deg(f)} a_i f_i^2$$ with $a_i\in F_{\geq 0}$ and $f_i\in F[x]$, i.e. the number of weighted squares needed in the representation of $f$ is bounded by $deg(f)$.

Note that we have to allow weights from $F_{\geq 0}$. For example, if $F=\BQ(\sqrt 2)$ and $f=\sqrt 2$ is the constant polynomial in $F[x]$, then $f\in Pos(\BR)$, but $f\notin \sum F[x]^2$. So, descent property will not hold if we take $T_{\{1\}}=\sum F[x]^2$.

\subsection{} Let $K=K_{\mathcal N}$ be a basic closed semi-algebraic set in $\BR$ with $\partial K \subset F $, where $\mathcal{N}=\{g_1,\ldots,g_s\} \subset F[x]$ is the natural choice of generators of $K$.  Let $\CP_2$ denote  the saturation property for preordering $T_{\mathcal N}$, i.e. whether $ T_{\mathcal N}=Pos(K)$ holds.

Kuhlmann and Marshall \cite[Thm 2.2]{Sal1} proved that saturation property $\CP_2$ holds over $\BR$, i.e. $Pos(K)\otimes \BR=T_{\mathcal N}\otimes \BR$.  We prove that the saturation property $\CP_2$ descends over $F$. More precisely, we prove the following $(\text{see}~ \ref{1 thm})$.
 
 \begin{theorem}\label{mt.1}
     Let  $K=K_{\mathcal N}$ be a basic closed semi-algebraic set in $\BR$ with $\partial K \subset F $, where $\mathcal N=\{g_1,\ldots,g_s\}\subset F[x]$ is the natural choice of generators of $K$. Then 
     
\begin{enumerate}
  
       \item $Pos(K)=  T_{\mathcal N}$, i.e. $T_{\mathcal N}$ is saturated.

         \item If $f\in Pos(K)$ and $deg(f)=n$, then  
         $$f=\sum_{e\in\{0,1\}^s } \sigma_e g^e,$$
         where  $deg(\sigma_e g^e)\leq n$,  $\sigma_e=\sum_1^{d_e} a_{e,i}f_{e,i}^2$ with $d_e=n-deg(g^e)$, $a_{e,i}\in F_{\geq 0}$ and $f_{e,i}\in F[x]$.
     \end{enumerate}
 \end{theorem}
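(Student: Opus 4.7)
The plan is to mirror Kuhlmann--Marshall's proof of the $\BR$-analogue \cite[Theorem~2.2]{Sal1}, using \cite[Proposition~12]{Sch1} as the $F$-rational substitute for the Hilbert identity $\sum\BR[x]^2 = Pos(\BR)\otimes\BR$ whenever the latter is invoked. Proceed by induction on the number of connected components of $K$: the workhorse is the case of a single half-line, from which the compact-interval case is derived by a M\"obius substitution, and the multi-interval case is derived by peeling off one gap at a time.

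The critical base case is $K=[a,\infty)$ with $a\in F$. After translating to $a=0$, the polynomial $f(x^2)\in F[x]$ is $\geq 0$ on all of $\BR$, so \cite[Proposition~12]{Sch1} yields $f(x^2)=\sum_i a_i p_i(x)^2$ with $a_i\in F_{\geq 0}$ and $p_i\in F[x]$ of degree $\leq n$. Writing each $p_i(x)=E_i(x^2)+xO_i(x^2)$ in even-plus-odd form and equating parts of each $x$-parity in the identity (the odd-in-$x$ part must vanish, since the left side is even in $x$) yields
\[
f(y)=\sum_i a_i E_i(y)^2 + y\sum_i a_i O_i(y)^2,
\]
the required $\sigma_0+(x-a)\sigma_1$ representation with the claimed degree bounds. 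The reflected half-line $K=(-\infty,b]$ is symmetric. For a compact interval $K=[a,b]$, first reduce to $K=[0,1]$ by an affine $F$-change of variable; then the M\"obius substitution $y=x/(1-x)$ converts $f$ into $\widetilde f(y):=(1+y)^n f\bigl(y/(1+y)\bigr)\in F[y]$, of degree at most $n$ and $\geq 0$ on $[0,\infty)$. Applying the half-line case gives $\widetilde f=\widetilde\sigma_0(y)+y\widetilde\sigma_1(y)$; back-substituting and multiplying through by $(1-x)^n$ produces terms $(1-x)^{n-2d_i}\widetilde p_i(x)^2$ and $x(1-x)^{n-1-2d_j'}\widetilde q_j(x)^2$, which split according to the parity of the $(1-x)$-exponent into the four buckets $\sigma_0,\;\sigma_1 x,\;\sigma_2(1-x),\;\sigma_3 x(1-x)$, each of total degree $\leq n$.

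For $K$ with $k\geq 2$ components, induct on $k$. Pick a gap $(c_1,c_2)$ with $(x-c_1)(x-c_2)\in\mathcal N$ and split $K=K_\ell\sqcup K_r$, where $K_\ell=K\cap(-\infty,c_1]$ and $K_r=K\cap[c_2,\infty)$; let $\mathcal N_\ell,\mathcal N_r$ denote the natural generators of $K_\ell,K_r$. By the inductive hypothesis, $f$ admits representations in both $T_{\mathcal N_\ell}$ and $T_{\mathcal N_r}$. The main obstacle---and the most delicate step of the whole argument---is to glue these into one element of $T_{\mathcal N}$: the generators $c_1-x\in\mathcal N_\ell$ and $x-c_2\in\mathcal N_r$ are absent from $\mathcal N$, and must be eliminated in favor of the gap generator $g:=(x-c_1)(x-c_2)=(c_1-x)(c_2-x)$. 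The expected resolution uses the identity $(c_1-x)(c_2-x)=g$ together with the observation that $c_2-x=(c_2-c_1)+(c_1-x)\in T_{\mathcal N_\ell}$ (and its mirror on $K_r$): pair each occurrence of the forbidden generator against a factor of the other, introduced by multiplication by a suitable square, and then combine the augmented $K_\ell$- and $K_r$-representations through a Bezout-type partition of unity on $F[x]$ tailored to the splitting. The degree bound $\deg(\sigma_e g^e)\leq n$ and the count $d_e=n-\deg g^e$ propagate through each step, since the base-case substitutions are degree-preserving after clearing denominators and the gluing trades one degree-$1$ generator for one degree-$2$ generator at matched total degree.
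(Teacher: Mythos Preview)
Your treatment of the connected base cases (half-line via even/odd splitting of $f(x^2)$, compact interval via a M\"obius/Goursat substitution) is correct and matches the paper. The problem is the multi-component step.

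You say you are ``mirroring Kuhlmann--Marshall,'' but the Kuhlmann--Marshall argument does \emph{not} induct on the number of connected components and then glue; it inducts on $\deg f$. The paper does the same. After the connected case, one first disposes of the easy situation where $f\geq 0$ on the convex hull of $K$ (so a single-interval representation already suffices), and then assumes $f$ dips below zero in some gap $(a,b)$. The heart of the proof is a careful case split on where the minimum of $f$ on $K$ is attained (interior zero, interior positive minimum, boundary zero), and in each case one manufactures a specific $g\in T_{\mathcal N}$ with $\deg g\leq \deg f$ so that $f-g=h'h$ with $h'\in T_{\mathcal N}$, $\deg h'\geq 1$, and $h\in Pos(K)$; induction on degree finishes. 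The construction of $g$ is delicate: in the boundary-minimum case it is a product of gap generators $(x-c_i)(x-d_i')$ or $(x-c_i)(x-d_i'')$ chosen one per zero of $f$ on $\partial K$, scaled so that $f-g$ acquires an extra root in a gap, and one checks $\deg g\leq\deg f$ by counting zeros of $f$ in the gaps.

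Your proposed alternative---split $K=K_\ell\sqcup K_r$ at a gap, represent $f$ separately in $T_{\mathcal N_\ell}$ and $T_{\mathcal N_r}$, then ``glue''---has a real obstruction that you have not addressed. The forbidden generator $c_1-x\in\mathcal N_\ell$ is \emph{negative} on all of $K_r$, so any individual term $\sigma\cdot(c_1-x)$ from the $K_\ell$-representation is not in $T_{\mathcal N}$ and cannot be converted into one by multiplying by a sum of squares; you would need cancellation among terms, and neither the identity $(c_1-x)(c_2-x)=g$ nor a ``Bezout-type partition of unity'' produces such cancellation without further input. Concretely, if $1=\phi_\ell+\phi_r$ with $\phi_\ell,\phi_r\in T_{\mathcal N}$, then $\phi_\ell\cdot(c_1-x)$ is still negative somewhere on $K_r$ unless $\phi_\ell$ vanishes there, which forces $\phi_\ell$ to be divisible by $(x-c_2)^2$ and destroys the degree bound. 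Your sketch gives no mechanism to resolve this, and the claim that the degree bound ``propagates'' through the gluing is unsupported. As written, this step is a gap; the paper's degree-induction with explicitly constructed correction terms is what actually makes the multi-component case go through.
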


\subsection{}  Kuhlmann and Marshall \cite{Sal1} and Kuhlmann, Marshall and Schwartz \cite{Sal2} obtained many consequences of \cite[Thm 2.2]{Sal1}. 
As an application of (\ref{mt.1}), we consider similar results (see \ref{Non cpt sat}).

 \begin{theorem}\label{mt.2}
Let $S=\{g_1,\ldots,g_s\}\subset F[x]$, $K=K_S$ with $\partial K\subset F$ and $K$ is non-compact. Let $\mathcal{N}$ be the natural choice of generators of $K$. Then
 \begin{enumerate}
     \item  $Pos(K)=T_S$, i.e. $T_S$ is saturated if and only if $S$ contains natural choice of generators of $K$, upto scaling by $F_{> 0}$.
     \item  $Pos(K)=M_{\mathcal N}$, i.e. quadratic module $M_{\mathcal N}$ is saturated if and only if either $|\CN|\leq 1$ or $|\CN|=2$ and $K$ has an isolated point. 
 \end{enumerate}
 \end{theorem}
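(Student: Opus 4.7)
My plan is to leverage Theorem~\ref{mt.1}, which identifies $Pos(K)$ with $T_{\mathcal N}$, in order to reduce both parts of Theorem~\ref{mt.2} to comparisons involving $T_{\mathcal N}$. For (1), the ``if'' direction is immediate: if $S$ contains an $F_{>0}$-multiple of each natural generator, then $T_S\supseteq T_{\mathcal N}=Pos(K)$, while $T_S\subseteq Pos(K_S)=Pos(K)$ is automatic. The substance is the ``only if'' direction, which I would prove by direct analysis of the representation $g=\sum_e\sigma_e g^e$ that exists for every $g\in\mathcal N\subseteq Pos(K)=T_S$.

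For $g=x-a\in\mathcal N$ (so $a=\min K$), I would first invoke the non-compactness of $K$: as $K$ is an unbounded finite union of closed intervals in $[a,\infty)$, it contains a ray $[M,\infty)$, which forces every non-constant $g_i\in S$ to have positive leading coefficient. Since each $\sigma_e\in\sum F_{\geq 0}F[x]^2$ also has non-negative leading coefficient, no cancellation can occur at the top degree of $\sum_e\sigma_e g^e$, so $\max_e\deg(\sigma_e g^e)=\deg(x-a)=1$. This constrains each nonzero summand to be either $\sigma_e=c_e\in F_{\geq 0}$ (with $g^e=1$) or a product $c_e g_i$ for some degree-one $g_i=\alpha_i(x-r_i)\in S$ with $\alpha_i>0$ and $r_i\leq a$. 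Matching the constant and linear coefficients of $x-a$ against this sum produces a convex-combination identity $\sum c_e\alpha_i r_i=a$ with $\sum c_e\alpha_i=1$ and $r_i\leq a$, forcing $r_i=a$ on the support; hence some $g_i\in S$ equals $\alpha_i(x-a)$. The cases $g=a-x$ (using a ray $(-\infty,-M]$) and $g=(x-a)(x-b)$ (top degree $2$, together with evaluations at both boundary points) are handled by the same template, each isolating a positive $F$-scalar multiple of the corresponding natural generator in $S$.

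For (2), the ``if'' direction splits into two cases. If $|\mathcal N|=1$, the quadratic module $M_{\mathcal N}$ coincides with $T_{\mathcal N}$ (there are no non-trivial products), and \ref{mt.1} finishes. If $|\mathcal N|=2$ and $K$ has an isolated point---WLOG $K=\{a\}\cup[b,\infty)$, $\mathcal N=\{x-a,(x-a)(x-b)\}$---the identity
\[
(x-a)^2(x-b)=(x-b)^2(x-a)+(b-a)(x-a)(x-b)
\]
places the only missing product $g_1 g_2$ inside $M_{\mathcal N}$, giving $T_{\mathcal N}\subseteq M_{\mathcal N}$ and hence $M_{\mathcal N}=T_{\mathcal N}=Pos(K)$. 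For the ``only if'' direction, I would enumerate the remaining shapes of a non-compact $K$ (either $|\mathcal N|\geq 3$, or $|\mathcal N|=2$ without an isolated point) and, in each, produce a product of two natural generators lying in $Pos(K)\setminus M_{\mathcal N}$. The model case is $K=[a,b]\cup[c,\infty)$ with $\mathcal N=\{x-a,(x-b)(x-c)\}$: a hypothetical representation $(x-a)(x-b)(x-c)=\sigma_0+\sigma_1(x-a)+\sigma_2(x-b)(x-c)$ with $\sigma_k\in\sum F_{\geq 0}F[x]^2$ is forced, via evaluations at $a,b,c$ and the SOS structure, to satisfy $(x-a)^2(x-b)^2(x-c)^2\mid\sigma_0$, $(x-b)^2(x-c)^2\mid\sigma_1$, and $(x-a)^2\mid\sigma_2$; each nonzero summand then has degree at least $4$ with positive leading coefficient at infinity, incompatible with a sum of degree $3$.

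The hardest step, I expect, is the ``only if'' direction of~(2): establishing non-representability of $g_i g_j$ in $M_{\mathcal N}$ uniformly across all forbidden configurations. The obstruction is subtle precisely because $M_{\mathcal N}$ is not multiplicatively closed, so it cannot be read off from Theorem~\ref{mt.1}; instead it must be detected by combining divisibility of the $\sigma_k$ at the boundary points of $K$ with the asymptotic leading-coefficient argument at infinity. This same interplay (between boundary evaluations and behavior on a ray $[M,\infty)\subseteq K$) is the engine behind the direct argument for the ``only if'' of~(1).
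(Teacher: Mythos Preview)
Your approach is essentially the paper's: for part~(1) it defers to Kuhlmann--Marshall \cite[Thm~2.2]{Sal1}, supplementing only the quadratic-generator case with precisely the degree-bound-plus-boundary-evaluation argument you outline, and for part~(2) it refers to \cite[Thm~2.5]{Sal1}, whose proof is the divisibility/leading-coefficient obstruction you describe for ``only if'' and explicit identities for ``if''. One small oversight: your ``WLOG $K=\{a\}\cup[b,\infty)$'' in the $|\mathcal N|=2$ isolated-point case does not cover the two-sided configuration $K=(-\infty,a]\cup\{b\}\cup[d,\infty)$ with $\mathcal N=\{(x-a)(x-b),(x-b)(x-d)\}$, which is not a reflection of your model case; there one uses instead
\[
g_1g_2=\mu(x-d)^2 g_1+\lambda(x-a)^2 g_2,\qquad \lambda=\tfrac{d-b}{d-a},\ \mu=\tfrac{b-a}{d-a},
\]
so the argument goes through unchanged.
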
  
 
\subsection{} 
Sturmfels asked the following descent problem.
Let $f\in \BQ[x_1,\ldots,x_n]$ such that $f\in \sum \BR[x_1,\ldots,x_n]^2$.  Is $f\in \sum \BQ[x_1,\ldots,x_n]^2$? 

It is well known that there exist a number field $L$ such that  $f\in \sum L[x_1,\ldots,x_n]^2$. Hillar \cite[Thm 1.4]{H} proved that if $L$ is totally real, then $f\in \sum \BQ[x_1,\ldots,x_n]^2$. 
Scheiderer generalized Hillar's result as follows. 
Let $E/k$ be a finite totally real extension of real fields, i.e. every ordering of $k$ extends to $[E:k]$ different orderings of $E$.  Let $S=\{g_1,\ldots,g_s\}\subset k[x_1,\ldots,x_n]$ and $f\in k[x_1,\ldots,x_n]$ such that $f\in \sum_0^s E[x_1,\ldots,x_n]^2g_i$. 
Then Scheiderer \cite[Prop 1.6]{S}  proved that $f\in \sum_0^s k[x_1,\ldots,x_n]^2g_i$.
Note that by taking $S=\{g^e : e\in \{0,1\}^s\}$, this covers the preordering case also.

Scheiderer \cite[Thm 2.1]{S} also gave examples showing that Sturmfels question has negative answer in general.

\subsection{}  Let $S\subset F[x_1,\ldots,x_n]$ such that $K_S$ is compact and let $f\in F[x_1,\ldots,x_n]$. 

Let $\CP_3$ be the Schm\"{u}dgen property, i.e. if $f>0$ on $K_S$, then $f\in T_S$.
Schm\"{u}dgen \cite[Cor 3]{Kon} proved that property $\CP_3$ holds over $\BR$, i.e. if $f>0$ on $K_S$, then $f\in T_S\otimes \BR$. Powers \cite[Thm 5]{Vict2} proved that property $\CP_3$ descends over $F=\BQ$. We observe that same proof of Powers show that property $\CP_3$ descends over $F$ (see \ref{Pow thm1}).

Let $\CP_4$ be the Putinar property, i.e. if $f>0$ on $K_S$, then $f\in M_S$.
Assume $M_S$ is archimedean, i.e. 
$$N-\sum_1^n x_i^2\in M_S$$
for some $N\in \BN$. Then Putinar \cite{Putinar} proved that property
$\CP_4$ holds over $\BR$, i.e. if $f>0$ on $K_S$, then $f\in M_S\otimes \BR$.
Powers \cite[Thm 7]{Vict2} proved that property $\CP_4$ descends over $F=\BQ$. We observe that same proof of Powers show that property $\CP_4$ descends over $F$ (see \ref{Pow thm2}).

\subsection{}  Let $C=\cap_{n\geq 0} C_n$ be the cantor set, where $C_{0}=[0,1]$, $C_1=[0,\frac 13]\cup [\frac 23,1]$, $C_2=[1,\frac 19]\cup [\frac 29, \frac 39]\cup [\frac 69, \frac 79]\cup[\frac 89, 1]$, etc.
    We show that 
    $$Pos(C)= \cup_{n\in\mathbb{N}} Pos(C_n)=\cup_{n\in\mathbb{N}} T_{\mathcal{N}_n}$$
    where $C_n$ is a  basic closed semi-algebraic set and $\CN_n$ is the natural choice of generators of $C_n$
(see \ref{t.2}).

\subsection{} Let $S=\{g_1,\ldots,g_s\} \subset F[x]$ and $K=K_S$ be a basic closed semi-algebraic set such that $\partial K \not\subset F$. We prove the following results.

\begin{enumerate}
    \item If $K$ is non-compact and $c\in \partial K$ with $[F(c):F]\geq 3$, then $Pos(K)$ is not finitely generated (see \ref{t4.1}).
    
    \item If $K=[c,c']$ with $c,c'$ conjugates and $[F(c):F]=2$, then $Pos(K)=T_{\{-(x-c)(x-c')\}}$ (see \ref{t4.5}).
    
    \item Assume $[F(c):F]\leq 2$ for all $c\in \partial K$ and if $[F(c):F]=2$ for some $c\in \partial K$ and $c<c'$ are conjugates, then $c'\in \partial K$ and $(c,c')\cap K=\emptyset.$ Then $Pos(K)=T_{\mathcal N}$, where $\mathcal N$ is the natural choice of generators of $K$ (see \ref{t4.5}).
    
    \item Let $d\in F_{>0}$ and $\sqrt d \not\in F$. Then 
    \begin{enumerate}
        \item $Pos([0,\sqrt d])=T_{\{x,d-x^2\}}$ (see \ref{t4.31}).
        
        \item $Pos([\sqrt d,\infty))$ is not finitely generated (see \ref{t4.3}).
    \end{enumerate}
     
\end{enumerate}
We pose some questions about finite generation of $Pos(K)$ in case $\partial K \not\subset F$ (see \ref{Qn}).

\subsection{} The last section discusses a question of Powers and is independent of previous sections. Let $S=\{g_1,\ldots,g_s\} \subset\BQ[x_1,\ldots,x_n]$ such that $K_S$ is compact. 
Let $\CP_5$ be the property that $M_S$ is archimedean, i.e. $N-\sum_1^n x_i^2\in M_S$ for some $N\in \BN$.
Assume $\CP_5$ holds over $\BR$, i.e. $M_S\otimes \BR$ is archimedean. 
Powers \cite{Vict2} asked whether $M_S$ is archimedean, i.e. whether property $\CP_5$ descends over $\BQ$. 
  We show that the answer is affirmative in the special case, when dimension of the ring $\BQ[x_1,\ldots,x_n]/(M_S \cap - M_S)$ is zero, i.e. support of $M_S$ has codimension zero (see \ref{quad in rp}).
  

\section{Preliminaries}

The following result is due to Magron,  El Din and Schweighofer \cite[Prop 12]{Sch1}.

\begin{lemma}\label{sos}
If $f\in F[x] \cap Pos(\mathbb R)$, then $f=\sum_1^{deg(f)} a_i f_i^2$ for some $a_i\in F_{\geq 0}$ and $f_i \in F[x]$. In particular,
$Pos(\mathbb R)=T_{\{1\}}=\sum F_{\geq 0} F[x]^2$. 
\end{lemma}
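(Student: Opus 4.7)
The plan is induction on $n = \deg f$. The base case $n = 0$ is immediate, since $f \in F_{\ge 0}$ equals $f \cdot 1^2$. For the inductive step I separate two cases according to whether $f$ has a real root.

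If $f$ has a real root $r \in \BR$, let $p \in F[x]$ be its minimal polynomial over $F$. By $\mathrm{Gal}(\bar F/F)$-invariance of $f$, every conjugate of $r$ is a root of $f$ with the same multiplicity, and this common multiplicity must be even since $f \ge 0$ near the real point $r$; hence $p^2 \mid f$. Writing $f = p^2 g$ yields $g \in F[x]$ with $g \ge 0$ on $\BR$ and $\deg g \le n - 2$; the inductive hypothesis gives $g = \sum_{i=1}^{\deg g} a_i h_i^2$, whence $f = \sum a_i (p h_i)^2$ uses at most $n - 2$ weighted squares, within the budget of $n$.

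If instead $f > 0$ on all of $\BR$, then $n = 2d$ is even with $a_0 := \mathrm{lc}(f) > 0$. My aim is to find a monic $h \in F[x]$ of degree $d$ such that $g := f - a_0 h^2$ satisfies $g \ge 0$ on $\BR$; then $\deg g < n$ by cancellation of the leading $x^{2d}$ terms, and induction on $g$ exhibits $f = a_0 h^2 + \sum_{i=1}^{\deg g} a_i h_i^2$ with at most $1 + (n - 1) = n$ weighted squares. Over $\BR$ such an $h$ exists by Hilbert's decomposition: factor $f = a_0 \prod (x - \alpha_i)(x - \bar\alpha_i)$ over $\BC$ and set $u + iv := \sqrt{a_0}\prod(x - \alpha_i)$, so that $u, v \in \BR[x]$ with $\deg u = d$, $\deg v \le d - 1$, and $f = u^2 + v^2$; the monic $h_0 := u/\sqrt{a_0} \in \BR[x]$ then satisfies $a_0 h_0^2 = u^2 \le f$, with equality only at the (at most $d - 1$) real roots of $v$.

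The main obstacle is descending $h_0 \in \BR[x]$ to an element of $F[x]$ while preserving feasibility. My plan is first to pass to a strictly feasible $h_1 := h_0 + \psi$ over $\BR$, where $\psi \in \BR[x]$ has degree $\le d - 2$ and is chosen via Lagrange interpolation so that $h_0(\rho_j) \psi(\rho_j) < 0$ at each real root $\rho_j$ of $v$; this is feasible because the at-most-$(d-1)$ strict sign conditions sit in $d - 1$ free coefficients of $\psi$, and the associated Vandermonde matrix in the distinct $\rho_j$'s is non-degenerate. After scaling $\psi$ by a small $\epsilon > 0$ if necessary, $f - a_0 h_1^2 > 0$ strictly on $\BR$; moreover, because $\psi$ has no $x^{d-1}$ term, the $x^{d-1}$ coefficient of $h_1$ still equals that of $h_0$, namely $a_1/(2 a_0) \in F$, so the $x^{2d-1}$ coefficient of $f - a_0 h_1^2$ vanishes and the asymptotic behavior remains controlled. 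Finally, density of $\BQ \subseteq F$ in $\BR$ combined with continuity of the map sending coefficient vectors of $h$ to $\min_{\BR}(f - a_0 h^2)$ produces a monic $h \in F[x]$ of degree $d$ close to $h_1$ with $f - a_0 h^2 \ge 0$ on $\BR$, completing the descent.
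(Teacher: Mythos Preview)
The paper does not prove this lemma; it merely cites \cite[Prop~12]{Sch1}. So there is no in-paper argument to compare against, and your attempt must stand on its own.

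Your Case 1 (real root) is correct. The gap is in Case 2 ($f>0$ on $\BR$), specifically in the control at infinity. You want $f-a_0h_1^2>0$ on all of $\BR$ and then $f-a_0h^2\ge 0$ for a nearby $h\in F[x]$; both steps require $f-a_0h^2$ to have \emph{even degree with positive leading coefficient}, else it tends to $-\infty$. Fixing the $x^d$ and $x^{d-1}$ coefficients of $h$ kills the $x^{2d}$ and $x^{2d-1}$ terms, but says nothing about the $x^{2d-2}$ coefficient. At $h=h_0$ that coefficient is $[v]_{d-1}^2$, which is positive only when $\deg v=d-1$. If $\deg v<d-1$ (which your setup allows), the $x^{2d-2}$ coefficient of $v^2$ is zero, the perturbation $-2\epsilon a_0 h_0\psi$ can push it negative, and your ``continuity of $h\mapsto\min_{\BR}(f-a_0h^2)$'' breaks down precisely where that leading coefficient crosses zero (the minimum jumps to $-\infty$).

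The fix is cheap: choose every $\alpha_i$ in the open upper half-plane. Then $\sum\mathrm{Im}(\alpha_i)>0$, so $\deg v=d-1$ and $[v]_{d-1}^2>0$. Now the $x^{2d-2}$ coefficient of $f-a_0h_1^2$ stays positive for small $\epsilon$, and remains positive under small perturbations of the lower $d-1$ coefficients of $h$ (keep both the $x^d$ and $x^{d-1}$ coefficients fixed at $1$ and $a_1/(2a_0)\in F$ in the final approximation, not just monicity). On that open set the map $h\mapsto\min_{\BR}(f-a_0h^2)$ is genuinely continuous---for instance, pass to $\bigl(f-a_0h^2\bigr)/(1+x^2)^{d-1}$ on the compactification $\BR\cup\{\infty\}$---and density of $F$ in $\BR$ completes the descent.
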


The following result is due to Berg and Maserick \cite[Lem 4]{B-M} in case $F=\BR$, see also \cite[Lem 2.7.4]{Mur1}. 
The same proof works in our case. See (\ref{CM2}) for a generalization.

\begin{lemma}\label{CM}
 Let $a,b\in F$ and $S=\{(x-a)(x-b)\}$. Let $c,d\in F$ with $a\leq c<d\leq b$.  Then $(x-c)(x-d)\in T_S$, where $T_S=\{\sigma_0 + \sigma_1 (x-a)(x-b) : \sigma_i\in \sum F_{\geq 0} F[x]^2\}$. 
\end{lemma}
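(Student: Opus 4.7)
The strategy is to produce an explicit algebraic identity
\[
(x-c)(x-d) = \alpha(x-a)^2 + \beta(x-b)^2 + \gamma(x-a)(x-b)
\]
in $F[x]$ with $\alpha,\beta,\gamma \in F_{\geq 0}$, mirroring the classical Berg--Maserick identity but tracking the field of coefficients. Note first that the hypothesis $c<d$ together with $a\leq c$ and $d\leq b$ forces $a<b$, so the three quadratics $(x-a)^2,(x-b)^2,(x-a)(x-b)$ are linearly independent in the $3$-dimensional $F$-vector space of polynomials of degree $\leq 2$, and every degree $\leq 2$ polynomial has a unique expression in this basis.

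First, I would solve the $3\times 3$ linear system obtained by matching coefficients of $x^2,x,1$. A direct computation yields
\[
\alpha=\frac{(b-c)(b-d)}{(b-a)^2},\qquad \beta=\frac{(c-a)(d-a)}{(b-a)^2},\qquad \gamma=\frac{(b-c)(d-a)+(b-d)(c-a)}{(b-a)^2}.
\]
All three clearly lie in $F$, and under the hypothesis $a\leq c<d\leq b$ each of the factors $b-c,\ b-d,\ c-a,\ d-a$ is non-negative, so $\alpha,\beta,\gamma\in F_{\geq 0}$.

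Finally, set $\sigma_0:=\alpha(x-a)^2+\beta(x-b)^2\in \sum F_{\geq 0} F[x]^2$ and $\sigma_1:=\gamma\cdot 1^2\in \sum F_{\geq 0} F[x]^2$. Then $(x-c)(x-d)=\sigma_0+\sigma_1(x-a)(x-b)\in T_S$, as required.

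There is essentially no hard step once the ansatz is written down: the argument reduces to elementary linear algebra and a sign check. The point worth emphasizing is that the decomposition is visibly field-agnostic --- the coefficients $\alpha,\beta,\gamma$ are rational functions of $a,b,c,d$ with denominator a perfect square --- which is precisely why the Berg--Maserick proof valid over $\BR$ descends verbatim to any subfield $F\subset\BR$, as announced in the remark preceding the lemma.
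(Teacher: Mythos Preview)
Your proof is correct and is precisely the Berg--Maserick argument the paper invokes: the paper gives no independent proof but cites \cite[Lem 4]{B-M} and \cite[Lem 2.7.4]{Mur1}, remarking that ``the same proof works in our case,'' and that proof is exactly the explicit identity $(x-c)(x-d)=\alpha(x-a)^2+\beta(x-b)^2+\gamma(x-a)(x-b)$ with the non-negative rational coefficients you compute. Your observation that the coefficients are rational functions of $a,b,c,d$ with denominator $(b-a)^2$ is the reason the descent from $\BR$ to $F$ is automatic, which is the only point the paper is making here.
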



\section{Main theorem}

In this section, we will prove our main result. The case $F=\BR$ is due to Kuhlmann and Marshall \cite[Thm 2.2]{Sal1}. 

\begin{theorem}\label{1 thm}
Let $K$ be a finite union of closed intervals in $\BR$ with $\partial K\subset F$ and $\mathcal{N}=\{g_1,\ldots,g_s\}$ be the natural choice of generators of $K$. Then   
\begin{enumerate}
         \item $Pos(K)=T_{\mathcal N}=\sum_{e\in \{0,1\}^s} F_{\geq 0}F[x]^2g^e$. 
         \item If $f\in Pos(K)$ and $deg(f)=n$, then 
         $$f=\sum_{e\in\{0,1\}^s } \sigma_e g^e,$$
         where $\sigma_e\in \sum F_{\geq 0} F[x]^2$, $deg(\sigma_e g^e)\leq n$ and  $\sigma_e=\sum_1^{d_e} a_{e,i}f_{e,i}^2$ with $d_e=n-deg(g^e)$, $a_{e,i}\in F_{\geq 0}$ and $f_{e,i}\in F[x]$.
     \end{enumerate}
\end{theorem}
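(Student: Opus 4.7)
The inclusion $T_\mathcal{N}\subseteq Pos(K)$ is immediate since each generator is $\geq 0$ on $K$, so the content is the reverse inclusion together with the degree bound. I plan to induct on $|\mathcal{N}|$, with Lemma \ref{sos} as the base ingredient and Lemma \ref{CM} serving as the gluing device for the multi-interval cases.

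\emph{Base case $|\mathcal{N}|=1$.} Four subcases occur. When $K=\BR$ we have $\mathcal{N}=\{1\}$ and the theorem is literally Lemma \ref{sos}. When $K=[a,\infty)$ (or symmetrically $(-\infty,b]$) with $a\in F$, I translate to $a=0$ and substitute $y^2=x$; then $f(y^2)\in F[y]\cap Pos(\BR)$, and Lemma \ref{sos} gives $f(y^2)=\sum_i a_i h_i(y)^2$ with $a_i\in F_{\geq 0}$, $h_i\in F[y]$. Writing $h_i(y)=A_i(y^2)+yB_i(y^2)$, the fact that $f(y^2)$ has only even powers of $y$ forces the odd cross-term $\sum 2a_i A_i B_i$ to vanish identically, leaving $f(y^2)=\sum_i a_i(A_i(y^2)^2+y^2 B_i(y^2)^2)$; substituting $y^2\mapsto x$ puts $f$ in $T_{\{x\}}$ with the asserted degree bound. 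For $K=(-\infty,a]\cup[b,\infty)$, I center so that $K=\BR\setminus(-c,c)$ and use the involution $x\mapsto -x$: the even part $E(x^2)=(f(x)+f(-x))/2$ lies in $Pos(K)$, so $E(y)\in Pos([c^2,\infty))$ and the half-line case gives $E(x^2)\in T_{\{x^2-c^2\}}$. The odd part $xO(x^2)$ is handled by applying the half-line case to $f$ on $[c,\infty)$ and to $f(-x)$ on $[c,\infty)$ separately and combining the two representations, using Lemma \ref{CM} to absorb residual linear factors into multiples of $x^2-c^2$.

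\emph{Inductive step $|\mathcal{N}|\geq 2$.} First consider $K=[a,b]$ with $\mathcal{N}=\{x-a,b-x\}$. After affine normalisation to $[0,1]$, I apply the M\"obius substitution $x=y/(1+y)$: the polynomial $g(y):=(1+y)^n f(x)\in F[y]$ is $\geq 0$ on $[0,\infty)$, so the half-line case gives $g=s_0(y)+ys_1(y)$ with $s_i\in \sum F_{\geq 0}F[y]^2$. Substituting back $y=x/(1-x)$ and clearing $(1-x)^n$ converts each square $p_i(y)^2$ into $(1-x)^{n-2\deg p_i}P_i(x)^2$ with $P_i\in F[x]$; the parity of $n-2\deg p_i$ dictates whether the term sits in the $1$, $x$, $1-x$, or $x(1-x)$ summand of $T_{\{x,1-x\}}$. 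For $K$ with more connected components, I pick a gap generator $(x-c)(x-d)\in\mathcal{N}$ and split $K=K_L\cup K_R$ with $K_L\subseteq(-\infty,c]$ and $K_R\subseteq[d,\infty)$, each with strictly fewer components, and apply the inductive hypothesis on each side to obtain $f\in T_{\mathcal{N}_L}$ and $f\in T_{\mathcal{N}_R}$.

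\emph{Main obstacle.} The hard step is gluing the two representations across the gap: $\mathcal{N}_L$ contains the linear factor $c-x$ and $\mathcal{N}_R$ contains $x-d$, whereas $\mathcal{N}$ contains $(x-c)(x-d)$ in place of both. I plan to combine the two identities so that the cross-contributions of the two linear factors pair up into multiples of $(x-c)(x-d)$ modulo squares in $F[x]$, using Lemma \ref{CM} (and an iterated version of it) to rewrite any residual linear factor lying inside an earlier gap as an element of $T_\mathcal{N}$. Finally, the sharp count $d_e=n-\deg g^e$ in part (2) is maintained by using the sharp bound in Lemma \ref{sos} at every invocation and by verifying that each substitution above is degree-controlled.
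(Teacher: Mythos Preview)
Your treatment of the connected cases ($\BR$, half-lines, $[a,b]$) matches the paper's and is fine. The difficulty is entirely in the disconnected case, and there your plan has a genuine gap.

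\textbf{The even/odd split does not work.} For $K=(-\infty,-c]\cup[c,\infty)$ with $\mathcal{N}=\{x^2-c^2\}$, you propose to write $f=E(x^2)+xO(x^2)$ and handle the pieces separately. But $T_{\mathcal N}\subseteq Pos(K)$, and the only odd polynomial in $Pos(K)$ is the zero polynomial (if $g$ is odd and $\geq 0$ on $K$, then $g(x)\geq 0$ and $g(x)=-g(-x)\leq 0$ for all $x\geq c$, forcing $g=0$). So whenever $f$ is not even, $xO(x^2)\notin T_{\mathcal N}$ and the decomposition cannot produce a representation term-by-term. Your fallback --- apply the half-line case to $f$ on $[c,\infty)$ and to $f(-x)$ on $[c,\infty)$ and ``combine'' --- is already the general gluing problem, which brings us to the next point.

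\textbf{Gluing two preordering representations does not come for free.} In the inductive step you obtain $f=\sigma_0^L+(c-x)\,\sigma_1^L+\cdots$ over $K_L$ and $f=\sigma_0^R+(x-d)\,\sigma_1^R+\cdots$ over $K_R$, and you want a representation in $T_{\mathcal N}$, where $c-x$ and $x-d$ have been replaced by the single generator $(x-c)(x-d)$. There is no algebraic identity that does this: any convex combination leaves you with unpaired terms $\lambda(c-x)\sigma_1^L+(1-\lambda)(x-d)\sigma_1^R$, neither of which lies in $T_{\mathcal N}$, and Lemma~\ref{CM} only lets you \emph{absorb} a quadratic $(x-r)(x-s)$ with $r,s$ inside a single gap --- it does not let you convert a stray linear factor $c-x$ into an element of $T_{\mathcal N}$. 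You acknowledge this is the ``main obstacle'' but the sentence ``pair up into multiples of $(x-c)(x-d)$ modulo squares'' is not an argument; there is no reason for $\sigma_1^L$ and $\sigma_1^R$ to be related in a way that makes this possible.

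\textbf{What the paper does instead.} The paper abandons induction on $|\mathcal N|$ and inducts on $\deg f$. After reducing to the situation where $f$ actually dips below zero in some gap, it \emph{constructs} an explicit $g\in T_{\mathcal N}$ of small degree (a carefully chosen quadratic, or a product of gap generators) such that $f-g$ factors as $h'h$ with $h'\in T_{\mathcal N}$, $\deg h'\geq 1$, and $h\in Pos(K)$ of strictly smaller degree. Lemma~\ref{CM} is used, but only to certify that the factor $h'=(x-r)(x-s)$ (with $r,s\in F$ chosen inside a gap) lies in $T_{\mathcal N}$ --- not to glue two halves. The construction of $g$ is the technical heart of the proof and requires a lengthy case analysis on the behaviour of $f$ near each boundary zero; this is the missing idea in your proposal.
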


\begin{proof} We will prove the result in steps depending on the geometry of $K$ and the property of polynomial $f\in Pos(K)$. The main idea of the proof is as follows. 

First  we consider the case when $K$ is connected (see \ref{thm.1}). 
When $K$ has atleast 2 connected components, we reduce to the case that there exist $a,b\in \partial K\subset F$ with $(a,b)\cap K=\emptyset$ and 
$f(x)<0$ for some $x\in (a,b)$ (see \ref{2.1}). Then we consider the case when global minimum of $f$ over $K$ occurs at $K^\circ$ (see \ref{thm.2}). Finally, we consider the remaining case, namely global minimum of $f$ over $K$ occurs at $\partial K$ (see \ref{thm.3}). For last case, first we reduce to the case that $f>0$ on $K^\circ$ and $f(c)=0$ for some $c\in \partial K$. Finally, it is shown that there exist $g\in T_{\mathcal N}$ with $deg(g)\leq deg(f)$ and $f-g=hh'$ with $deg(h')\geq 1$, $h'\in T_{\mathcal N}$ and $h\in Pos(K)$. The proof of this part is done by considering the behavior of $f$ at all points of $\partial K$. By induction hypothesis on degree, we get $h\in T_{\mathcal N}$ and hence $f\in T_{\mathcal N}$. 

Let us  start with the proof.
We write $\epsilon,\epsilon'$ etc for small positive real numbers. Let $f\in Pos(K)$. To show $f\in T_{\mathcal{N}}$, we will use induction on $deg(f)$. If $deg(f)=0$, then $f\in F_{\geq 0} \subset T_{\mathcal{N}}$. Thus we assume $deg(f)\geq 1$. 

\subsection{} \label{thm.1} $K$ is connected.

\subsubsection{} \label{0 thm} $K=[a,\infty)$ resp. $(-\infty,a]$ for $a\in F$, i.e. $K$ is connected and unbounded.

Replacing $x$ by $x-a$ resp. $a-x$ resp., we may assume $K=[0,\infty)$ and hence $\mathcal{N}=\{x\}$.
Note $f(x^2)\in Pos(\mathbb{R})$.
By $(\ref{sos})$, $f(x^2)=\sum t_i q_{i}^2$ with $t_i\in F_{\geq 0}$ and $q_{i}\in F[x]$. 
Write $q_i(x)=\alpha_i(x^2)+x\beta_i(x^2)$ in $F[x]$. Then
\[f(x^2)
= \sum t_i (\alpha_i(x^2)^2+x^2\beta_i(x^2)^2)+2x\sum t_i\alpha_i(x^2) \beta_i(x^2)
\]
Each monomial in $f(x^2)$ is of even degree, hence $\sum t_i\alpha_i(x^2) \beta_i(x^2)=0$. Therefore
$$f(x^2)=\sum t_i(\alpha_i(x^2)^2+x^2\beta_i(x^2)^2)$$
and hence $f(x)=\sum t_i(\alpha_i(x)^2+x\beta_i(x)^2)$. Thus $f\in T_{\mathcal{N}}$ and (\ref{0 thm}) is proved.

\subsubsection{} \label{1} $K=[a,b]$ with $a,b\in F$, i.e. $K$ is connected and bounded.

In this case, the result is classical when $F=\mathbb R$. The proof given in \cite[Cor 3]{Vict1} works using  $(\ref{0 thm})$. We will  sketch the idea of proof. Using change of variable, we assume $K=[-1,1]$. If  $deg(f)=m$, then $m$-th degree Goursat transform of $f$, defined by $$\wt f(x)=(1+x)^m f\left(\frac {1-x}{1+x}\right)$$ has the property that $\wt f\in Pos([0,\infty))$. By $(\ref{0 thm})$, we can write $\wt f = \sigma_0+x\sigma_1$, where $\sigma_0,\sigma_1\in \sum F_{\geq 0}F[x]^2$. Noting $$\wt{\wt f}(x)=2^m f(x) \in T_{\{x+1,1-x\}},$$ we get $f\in T_{\mathcal{N}}$ and (\ref{1}) is proved.

\medskip
Using (\ref{thm.1}), we assume $K$ has atleast $2$ connected components.

\subsection{}\label{2.1}
\begin{enumerate}

\item[$(a_1)$]\label{5} If $K$ has a smallest element $a\in F$ and no largest element, then $K\subset [a,\infty)=K_{\{x-a\}}$. If $f\geq 0$ on $[a,\infty)$, then by $(\ref{0 thm})$, $f\in T_{\{x-a\}}\subset T_{\mathcal{N}}$.

\item[$(a_2)$]\label{6} If $K$ has a largest element $b\in F$ and no smallest element, then $K\subset (-\infty,b]=K_{\{b-x\}}$. If  $f\geq 0$ on $(-\infty,b]$, then by $(\ref{0 thm})$, $f\in T_{\{b-x\}}\subset T_{\mathcal{N}}$.

\item[$(a_3)$]\label{7} If $K$ has a smallest element $a$ and a largest element $b$, then $K\subset [a,b]=K_{\{x-a,b-x\}}$. If $f\geq 0$ on $[a,b]$, then by $(\ref{1})$, $f\in T_{\{x-a,b-x\}}\subset T_{\mathcal{N}}$. 
\end{enumerate}

Using (\ref{2.1}), we assume the following holds $(\clubsuit)$ : \\
\fbox{there exist $a,b\in \partial K\subset F$ with $(a,b)\cap K=\emptyset$ such that
$f(x)<0$ for some $x\in (a,b)$.}

\subsection{}\label{thm.2} Global minimum of $f$ over $K$ occurs at $K^\circ$.

\subsubsection{}\label{2.2} $f(c)=0$ at some interior point $c\in K^\circ$. 

As $f\in Pos(K)$, $c$ is a global minimum of $f$ on $K$. Hence $f'(c)=0$ as $c\in K^\circ$. Note $f>0$ on $(c-\epsilon,c+\epsilon)\setminus \{c\}$, thus $c$ is a root of $f$ of even multiplicity $2r\geq 2$. Since $f\in F[x]$, we get $c$ is algebraic  over $F$. If $g(x)\in F[x]$ is the minimal polynomial of $c$ over $F$, then $g^{2r}$ divides $f$ in $F[x]$. Let $$f=g^{2r} h\hspace{.1in} \text{with}\hspace{.1in} h\in F[x].$$ 

{\bf Claim 1:} $h\in Pos(K)$. 

Assuming the claim, $h\in T_{\mathcal{N}}$ by induction on degree. Hence $f\in T_{\mathcal{N}}$.
\medskip

{\bf Proof of Claim 1:}
\begin{enumerate}
\item[$(b_1)$] Assume $deg (g)=1$. Then $c\in F$ and $g=x-c$. For $a\in K\setminus \{c\}$, $f(a)\geq 0$ implies $h(a)\geq 0$. Since $(c-\epsilon,c+\epsilon)\subset K^\circ$, by continuity of $h$, we get $h(c)\geq 0$. Thus $h\in Pos(K)$.

\item[$(b_2)$] Assume $deg (g)>1$. Then $c\not\in F$. Let $c_1,\ldots,c_r \in \BR$ be all the real roots of $g$. 

\begin{itemize}
\item[$(b_{2.1})$] If $a$ is an isolated point of $K$, then $a\in F$. Thus $a\neq c_j$ for all $j$. More generally, for any $b\in K\setminus \{c_1,\ldots,c_r\}$, $f(b)\geq 0$ and $g(b)\neq 0$, hence $h(b)\geq 0$.

\item[$(b_{2.2})$] If any $c_j \in K$, then $c_j$ is not an isolated point. Then $[c_j,c_j+\epsilon)\subset K$ or $(c_j-\epsilon,c_j]\subset K$.
Thus $h\geq 0$ on $(c_j,c_j+\epsilon)$ or $(c_j-\epsilon,c_j)$ whatever is the case. By continuity of $h$, we get $h(c_j)\geq 0$. Thus $h\in Pos(K)$ and claim 1 is proved.
\end{itemize}
\end{enumerate}
Thus (\ref{2.2}) is proved.

\subsubsection{}\label{3.10} Assume $f> 0$ on $K$ and global minimum of $f$ over $K$ is attained at some point of $K^\circ$. 

\fbox{Assume $deg(f)\leq 2$.} Then $f$ will not have a global minimum in $K^\circ$, since $f$ satisfies $(\clubsuit)$.
Therefore this case will not arise. 

\fbox{Assume $deg(f)=n\geq 3$.} Scaling $f$ by  $F_{>0}$,  we may assume $$f\geq 2\hspace{.2in} \text{on}\hspace{.2in} K.$$ Note $f$  has no zero in $K$ 
and $f$ takes negative value in $(\alpha,\beta)$, where $\alpha,\beta\in \partial K$ with $(\alpha,\beta)\cap K=\emptyset$. By change of variable, we assume $\alpha=-1,\beta=1$. Thus 
$$-1,1\in \partial K,\hspace{.1in}  (-1,1)\cap K=\emptyset\hspace{.1in} \text{and}\hspace{.1in} f\hspace{.1in}\text{ takes negative value in}\hspace{.1in} (-1,1).$$

 {\bf Idea of proof:}
We want to find $g\in T_{\mathcal{N}}$ such that 
\begin{enumerate}
    \item[$(c_1)$] $deg(g) =2< deg(f)$,
     \item[$(c_2)$] $f-g\geq 0$ on $K$,
     \item[$(c_2)$] $f(r)=g(r)$ and $f(s)=g(s)$ for
    some $r,s\in F \cap (-1,1)$.
    \end{enumerate}
 Then $f-g=(x-r)(x-s)h$. Since $h\in Pos (K)$, by induction on degree, $h\in T_{\mathcal{N}}$. Further, $(x-r)(x-s)\in T_{\{(x+1)(x-1)\}}\subset T_{\mathcal{N}}$, by (\ref{CM}), hence $f\in T_{\mathcal{N}}$. We will give the proof now.

Assume $f(c)=f(d)=0$, where $-1< c < d < 1$ and $f> 0$ on $[-1,c) \cup (d,1]$. 

$(d_1)$ If $c,d\in F$, then $f(x)=(x-c)(x-d)h$. As $(x-c)(x-d)\in T_{\mathcal{N}}$ by (\ref{CM}), we get $h\in Pos(K)$.  By induction on degree, $h\in T_{\mathcal{N}}$ and hence  $f\in T_{\mathcal{N}}$. 

$(d_2)$ Assume at least one of $c,d \notin F$. Let $c<c'<c''<d$. Choose an increasing sequence $(s_n)$ in $F\cap (c'',d)$ and a decreasing sequence $(r_n)$ in $F\cap (c, c')$ such that 
\begin{itemize}
\item[($d_{2.1})$] $s_n\to d$ and $r_n\to c$. 
\item[$(d_{2.2})$] $f(r_n), f(s_n)<0$. Note $f(r_n)\to f(c)=0$ and  $f(s_n) \to f(d)=0$.
\end{itemize}

If $c\in F$, we choose $r_n=c$. Similarly if $d\in F$, we choose $s_n=d$.

\medskip

{\bf{Claim 2:}}
Given $N\in \mathbb N$,  there exist a quadratic polynomial $G_N \in T_{\mathcal{N}}$ such that
\begin{enumerate} 
    
    \item [$(d_3)$] $f-G_N\geq 0$ on $[-N,N]\cap K$ and $G_N-G_{N+1}\geq 0$  on $K$.
    \item [($d_4)$] $f-G_N=h' h$ for some $h'\in T_{\mathcal{N}}$ with $deg(h')=2$ and $h\geq 0$ on $[-N,N]\cap K$.
\end{enumerate}

{\bf{Claim 3:}} For large $N$, both $f-G_N$ and $h$ are $\geq 0$ on $K$.
\medskip

Assume claim $2,3$ hold. Then $f-G_N= h'h$ with $h\in Pos(K)$. By induction hypothesis on degree, $h\in T_{\mathcal{N}}$, hence $f\in T_{\mathcal{N}}$. This proves (\ref{3.10}). Thus we need to prove claim $2, 3$.
\medskip

{\bf Proof of Claim 2:}
Fix $N\in \BN$. For $a_n,b_n\in F$ and $\delta_N\in (0,1) \cap F$, to be chosen later, let
    $$g_n(x)=\delta_N [(x-r_n)(x-s_n)+ a_nx +b_n]$$
Solving for 
$  f(r_n)= g_n(r_n)$ and
  $  f(s_n)= g_n(s_n)$, we get 
$$    a_n= \frac{f(s_n)-f(r_n)}{\delta_N(s_n -r_n)} , \:\: \: b_n= \frac{-r_nf(s_n)+ s_nf(r_n)}{\delta_N(s_n -r_n)}
$$
Choose $\delta_N \in (0,1) \cap F$ such that 
$$\delta_N \leq \text{min}\left\{\frac{1}{2(N+1)},|\theta|\right\}$$
      where $\theta$ is the  leading coefficient of $f$.
Since $s_n-r_n>c''-c'>0$, we get $\{\frac 1{s_n-r_n}\}$ is bounded. Further, $f(r_n)\to 0$ and  $f(s_n)\to 0$, so
$$\delta_N\left[a_n(\pm N)+b_n\right] \leq \frac 12$$
for large $n$. 
Since $-1 < c \leq  r_n < s_n \leq d < 1 $, we get
$$(\pm 1 -r_n)(\pm 1 -s_n) > \epsilon ' > 0.$$ Further,
$a_n\to 0 $ and $ b_n \to 0$.
Therefore, for  large $n$,  we get 
$$g_n(\pm 1)\geq 0 \hspace{.3in},\hspace{.3in}0 < g_n\leq 1 \hspace{.1in}\text{on} \hspace{.1in}[-N,N]\cap K.$$

Let
$G_N(x) =g_n(x)$ for large $n$ having above properties. Then $0<G_N\leq 1$ on $[-N,N]\cap K$. Since $f \geq 2$ on  $K$, we get $f- G_N \geq 0$ on $[-N,N]\cap K$.

We also have $f(r_n)=G_N(r_n)$ and $f(s_n)=G_N(s_n)$ and $deg (G_N)=2$. 
Therefore $$f(x)- G_N(x)= h'h,$$
where $h'=(x-r_n)(x-s_n)\in T_{\{(x+1)(x-1)\}}\subset T_{\mathcal{N}}$, by (\ref{CM}). Since $(f-G_N)$ and $ h'$ are $> 0 $ on $[-N,N]\cap K$, we get $h\geq 0$ on $[-N,N]\cap K$. From the construction, it is clear that by  choosing $\delta_{N+1}< \delta_{N}$ and $n$ large, we get $G_N - G_{N+1}\geq 0$ on $K$. This completes the proof of claim $2$.
\medskip

{\bf Proof of Claim 3:}
 Let $L(h)$ denote the leading coefficient of $h\in F[x]$.
\begin{enumerate}
\item[$(e_1)$] If $K$ is compact, then we can assume $K\subset [-N,N]$. Thus $f-G_N$ and $h$ are $\geq 0$ on $K$.

\item[$(e_2)$] If $K$ has a minimal element $a$ and contains a ray $[b,\infty)$, then choosing $N$ large, we may assume $K\subseteq [-N,\infty)$. By claim 2, $f-G_N\geq 0$ on $[-N,N]\cap K$. While constructing $G_N$, we can further assume that $L(G_N)<L(f)$. This will ensure that  $f-G_N\geq 0$ on $K$ for large $N$. 

\item[$(e_3)$] If $K$ has a maximal element $a$ and contains a ray $(-\infty,b]$, then choosing $N$ large, we may assume $K\subseteq(-\infty,N]$. By claim 2, $f-G_N\geq 0$ on $[-N,N]\cap K$. While constructing $G_N$, we can further assume that $L(G_N)<|L(f)|$. This will ensure that  $f-G_N\geq 0$ on $K$ for large $N$.

\item[$(e_4)$] If $K$ has neither minimal nor maximal element and contains $(-\infty,b]\cup[a,\infty) $, then constructing $G_N$ with $L(G_N)<L(f)$ will ensure that $f-G_N \geq 0$ on $K$ for large $N$.
\end{enumerate}

Thus claim $3$ is proved and so (\ref{3.10}) is proved, which was the case that global minimum of $f$ over $K$ is attained at some point of $K^\circ$. Now we will consider the last case when global minimum of $f$ over $K$ is attained at some point of $\partial K$. This includes the case when $K^\circ=\emptyset$, i.e. $K$ is a finite set of points.
\medskip

Now we may assume that $f>0$ on $K^\circ$. 
We first consider the case when global minimum of $f$ is attained at $c \in \partial K$ with $f(c)>0$. 
Since $c,f(c)\in F$, $g=f-f(c)\in F[x]$ with  $g\geq 0$ on $K$ and $g(c)=0$. If we show that $g\in T_{\mathcal{N}}$, then $f=g+f(c)\in T_{\mathcal{N}}$. Thus it is enough to consider the case that 
$$f>0\hspace{.1in} \text{on}\hspace{.1in} K^\circ\hspace{.1in} \text{and}\hspace{.1in} f(c)=0\hspace{.1in} \text{for some}\hspace{.1in} c\in \partial K.$$

\subsection{} \label{thm.3}  $f>0$ on $K^\circ$, global minimum of $f$ is attained at $c \in \partial K$ with $f(c)=0$.

\fbox{Assume $deg(f)=1$.} Then $f$ does not satisfy $(\clubsuit)$ and this case does not occur.

\fbox{Assume $deg(f)=2$.}
Since $f$ satisfies $(\clubsuit)$, $f'(c)\neq 0$ and $f$ is negative on one side of $c$.

\begin{itemize}
\item[$(f_1)$] Assume $f$ is negative on $(c, c+\epsilon)$. 
There exist $d\in \partial K$ with $c<d$, $(c,d)\cap K=\emptyset$ 
and $e\in (c,d]$ with $f(e)=0$. 
Then $f=\beta (x-c)(x-e)$ for some $\beta\in F$ as $f\in F[x]$. Note $\beta>0$, since $f<0$ on $(c,e)$. Since $c\in F$ and $f\in F[x]$, we get $e\in F$.
By (\ref{CM}), $f\in T_{\{(x-c)(x-d)\}}\subset T_{\mathcal{N}}$. 

\item[$(f_2)$] Assume $f$ is negative on $(c-\epsilon,c)$. There exist $d'\in \partial K$ with $d'<c$, $(d',c)\cap K=\emptyset$ and $e_1\in [d',c)$ with $f(e_1)=0$. 
Then $f=\beta (x-e_1)(x-c)$ for some $\beta\in F$ as $f\in F[x]$. Note $\beta>0$, since $f<0$ on $(e_1,c)$. Since $c\in F$ and $f\in F[x]$, we get $e_1\in F$.
By (\ref{CM}), $f\in T_{\{(x-d')(x-c)\}}\subset T_{\mathcal{N}}$.
\end{itemize}

\fbox{Assume $deg(f)\geq 3$.}
Let $$\{x\in K :f(x)=0\}=\{c_1<\ldots<c_r\}\subset \partial K.$$ We can find $\epsilon,\delta \in (0,1)$ such that $f>\delta$ on $K\setminus \cup_1^r (c_i-\epsilon,c_i+\epsilon)$, where $(c_i-\epsilon,c_i+\epsilon)\cap K=\{c_i\}$ if $c_i$ is an isolated point of $K$. 

We will use the notation \fbox{$d_i',~d_i'',~c_i',~c_i''$} in the rest of the proof, which is explained below.
\begin{enumerate}
\item[$(g_1)$] If $c_i$ is  an  isolated point of $K$ and is not a maximal or minimal element of $K$, then both $(g_{1.1})$  and $(g_{1.2})$  hold. When  $c_1$ is the minimal element of $K$, then only $(g_{1.1})$ holds and  when $c_r$ is the maximal element of $K$, then only $(g_{1.2})$ holds.

\begin{itemize}
\item[$(g_{1.1})$] there exist $d_i''\in \partial K$ with $c_i<d_i''$ and $(c_i,d_i'')\cap K=\emptyset$. Note if $f(d_i'')=0$, then $d_i''=c_{i+1}$ (so $i<r$).
\item[$(g_{1.2})$] there exist $d'_i\in \partial K$ with $d'_i<c_i$ and $(d'_i,c_i)\cap K=\emptyset$. Note if $f(d'_i)=0$, then $d'_i=c_{i-1}$ (so $i>1$).
\end{itemize}
  
\item[$(g_2)$] If $c_i$ is not an isolated point of $K$, then exactly one of $(g_{2.1})$ and $(g_{2.2})$ holds.
\begin{itemize}
    \item[$(g_{2.1})$] $c_i$ is a right boundary point of a connected component $[c'_i,c_i]$ of $K$ with $c'_i<c_i$ and $f>0$ on $(c'_i,c_i)\subset K^\circ$. If $c_i$ is not the maximal element of $K$, then there exist $d_i''\in \partial K$ with $c_i<d_i''$ and $(c_i,d_i'')\cap K=\emptyset$. Note if $f(d_i'')=0$, then $d_i''=c_{i+1}$ (so $i<r$).

    \item[$(g_{2.2})$] $c_i$ is a left boundary point of a connected component $[c_i,c_i'']$ of $K$ with $c_i<c_i''$ and $f>0$ on $(c_i,c_i'')\subset K^\circ$.  If $c_i$ is not the minimal element of $K$, then there exist $d_i'\in \partial K$ with $d_i'<c_i$ and $(d_i',c_i)\cap K=\emptyset$. Note if $f(d_i')=0$, then $d_i'=c_{i-1}$ (so $i>1$).
\end{itemize}
\end{enumerate}
\medskip
\noindent{\bf Claim ($\star$) :} There exist $g\in T_{\mathcal{N}}$ having the following properties.  
\begin{itemize}
\item[$(\star_1)$] $deg(g)\leq  deg(f)$. 
\item[$(\star_2)$] $f-g=h'h$ with $h'\in T_{\mathcal{N}}$ and $deg(h')\geq 1$.
 \item[$(\star_3)$] $h\geq 0$ on $K$. 
\end{itemize}

Assuming $(\star)$, by induction hypothesis on degree, $h\in T_{\mathcal{N}}$ and hence $f\in T_{\mathcal{N}}$. Thus we are reduced to proving $(\star)$ which will be done in cases. Let $c_i\in\{c_1,c_2,\dots,c_r\}$. 

\subsubsection{}\label{t4.6} We will first consider the cases when $(\star)$ can be proved with $g=0$.

\begin{enumerate}
\item[$(h_1)$] \label{positiveboth} There exists $i\in\{1,\ldots,r\}$ such that $f>0$ on both sides of $c_i$, i.e. $f>0$ on $(c_i-\epsilon,c_i)\cup(c_i,c_i+\epsilon)$.

Since $f'(c_i)=0$, we can write $f(x)=(x-c_i)^2 h(x)$ with $h\in F[x]$. Clearly $h\geq0$ on $K\setminus\{c_i\}$. As $f$ and $(x-c_i)^2$ are $>0$ on $(c_i,c_i+\epsilon)$, $h>0$ on $(c_i,c_i+\epsilon)$.  By continuity of $h$, we get $h(c_i)\geq0$. Hence $h\in Pos(K)$. We get $(\star)$ with $g=0$ and $h'=(x-c_i)^2$.

\item[$(h_2)$] There exists $i\in\{1,\ldots,r\}$ such that $f<0$ on $(c_i,c_i+\epsilon)$  and $f<0$ on $(d_i''-\epsilon',d_i'')$. Note $f(d_i'')=0$ (i.e. $d_i''=c_{i+1}$).

We can write $f(x)=(x-c_i)(x-d_i'')h(x)$ with $h\in F[x]$. Clearly $h\geq0$ on $K\setminus\{c_i,d_i''\}$. As $f$ and  $(x-c_i)(x-d_i'')$ are $<0$ on $(c_i,c_i+\epsilon)\cup(d_i''-\epsilon',d_i'')$, $h>0$ on $(c_i,c_i+\epsilon)\cup(d_i''-\epsilon',d_i'')$. By continuity of $h$, we get $h(c_i)\geq 0$ and  $h(d_i'')\geq 0$. Hence $h\in Pos(K)$. We get $(\star)$ with $g=0$ and $h'=(x-c_i)(x-d_i'')\in \mathcal{N}$.

\item[$(h_3)$] \label{minimal}  $c_1$ is the minimal element of $K$ and $f>0$ on $(c_1,c_1+\epsilon)$.
    
    We can write $f(x)=(x-c_1)h(x)$. Clearly $h\geq0$ on $K\setminus\{c_1\}$. As $f$ and  $(x-c_1)$ are $>0$ on $(c_1,c_1+\epsilon)$, $h>0$ on $(c_1,c_1+\epsilon)$. By continuity of $h$, we get $h(c_1)\geq0$. Hence $h\in Pos(K)$. We get $(\star)$ with $g=0$ and $h'=(x-c_1)\in \mathcal{N}$.
    
\item[$(h_4)$]  $c_r$ is the maximal element of $K$ and $f>0$ on $(c_r-\epsilon,c_r)$.
    
    We can write $f(x)=(c_r-x)h(x)$. Clearly $h\geq0$ on $K\setminus\{c_r\}$. As $f$ and $(c_r-x)$ are $>0$ on $(c_r-\epsilon,c_r)$, $h>0$ on $(c_r-\epsilon,c_r)$. By continuity of $h$, we get $h(c_r)\geq0$. Hence $h\in Pos(K)$. We get $(\star)$ with $g=0$ and $h'=(c_r-x)\in \mathcal{N}$.
\end{enumerate}

\subsubsection{}\label{13.1} Now we will prove $(\star)$ in the remaining cases. 

By (\ref{t4.6}),  we assume the following.
\begin{enumerate}
\item[$(h_5)$]  $f\not > 0$ on $(c_i-\epsilon,c_i)\cup(c_i,c_i+\epsilon)$.

\item[$(h_6)$] If $f<0$ on $(c_i,c_i+\epsilon)$,
then $f\geq 0$ on $(d_i''-\epsilon',d_i'']$.

\item[$(h_7)$] If $c_1$ is the minimal element of $K$, then $f<0$ on $(c_1,c_1+\epsilon)$. In this case $c_1$ is an isolated point.

\item[$(h_8)$] If $c_r$ is the maximal element of $K$, then $f<0$ on $(c_r-\epsilon,c_r)$. In this case $c_r$ is an isolated point.
    
\end{enumerate}

The basic idea is that for each $c_i$, we will find a $$g_i\in \{1,~~ g_i''=(x-c_i)(x-d_i''),~~ g_i'=(x-c_i)(x-d_i')\}\subset T_{\mathcal{N}}$$ such that $(\star)$ holds with $g=a g_1\dots g_r$ for some $a\in F_{>0}$. 
Further, we will ensure that $f-ag_1\ldots g_r$ vanishes at two points $r,s \in I\cap F$ where $I$ is one of the closed intervals, $[d'_1,c_1]$ or $[c_1,d''_1]$. This will give us 
$$f-ag_1\ldots g_r=hh'\hspace{.1in} \text{with}\hspace{.1in} h'=(x-r)(x-s)\hspace{.1in} \text{and}\hspace{.1in} h\geq 0 \hspace{.1in}\text{on}\hspace{.1in} K.$$ Now either $h'\in T_{\{(x-c_1)(x-d_1')\}}\subset T_{\mathcal N}\hspace{.1in}\text{or}\hspace{.1in}h'\in T_{\{(x-c_1)(x-d_1'')\}}\subset T_{\mathcal N}.$ By induction hypothesis on degree, we will get $h\in T_{\mathcal N}$, hence $f\in T_{\mathcal N}$. The choice of $g_i$ at $c_i$ depends on the function $f$ and also how $g_{i-1}$ is chosen at $c_{i-1}$ in the previous step. We will first choose $g_1$, then $g_2$ and so on and finally $g_r$. 

We will choose $g_i \in \{1,g_i'',g_i'\}$ for $c_i$ as follows:
\begin{enumerate} 
\item[$(i_1)$] $c_i$ is isolated,  $f >0$ on $(c_i,c_i+\epsilon)\cup (d_i''-\epsilon,d_i'')$. Note $f<0$ on $(c_i-\epsilon, c_i)$, by $(h_5)$. 
\begin{enumerate}
    \item $d_i'\neq c_{i-1}$, take $g_i'$
    \item $d_i'= c_{i-1}$, take $g_i=1$.
\end{enumerate}

\item[$(i_2)$] $c_i$ is isolated, $f >0$ on $(c_i,c_i+\epsilon)$ and $f <0$ on $(d_i''-\epsilon,d_i'')$, take $g_i''$.

\item[$(i_3)$]  $f <0$ on $(c_i,c_i+\epsilon)$ and $f >0$ on $(d_i''-\epsilon,d_i'')$, take $g_i''$.

\item[$(i_4)$]    $f <0$ on $(c_i,c_i+\epsilon)\cup (d_i''-\epsilon,d_i'')$, this case is $(h_2)$

\item[$(i_5)$]   $[c_i,c_i'']\subset K$. Note $f<0$ on $(c_i-\epsilon, c_i)$, by $(h_5)$. 
\begin{enumerate}
    \item $d_i'\neq c_{i-1}$, take $g_i'$
    \item $d_i'= c_{i-1}$, take $g_i=1$.
\end{enumerate}

\end{enumerate}

Let us describe explicitly the choice of $g_i$'s. We will begin with $g_1$.

\begin{enumerate}

\item[$(A1)$] At $c_1$.
\begin{enumerate}

\item[$(j_1)$] $c_1$ is the minimal element of $K$. 
Then $f<0$ on $(c_1,c_1+\epsilon)$, by $(h_7)$, i.e.
$c_1$ is an isolated point. By $(h_6)$, $f\geq 0$ on $(d_1''-\epsilon, d_1'']$. Take $g_1''$.

\item[$(j_2)$] $c_1$ is not  the minimal element of $K$ and $f<0$ on $(c_1, c_1+\epsilon)$.  By $(h_6)$, $f\geq 0$ on $(d_1''-\epsilon, d_1'']$. Take  $g_1''$.

\item[$(j_3)$] $c_1$ is not  the minimal element of $K$ and $f>0$ on $(c_1, c_1+\epsilon)$.
       \begin{itemize}
           \item[$(j_{3.1})$] \label{case1st} $c_1$ is an isolated point, $f(d_1'')=0$ and $f<0$ on $(d_1''-\epsilon, d_1'')$. Take  $g_1''$.
          
           \item[$(j_{3.2})$] \label{case3.1} Take $g_1'$
           in the following cases. 
           \begin{enumerate}
               \item 
           $c_1$ is an isolated point and $f\geq0$ on $(d_1''-\epsilon,d_1'']$. Note $f(d_1')> 0$.
           \item\label{case3.2} $[c_1, c''_1]\subseteq K$. Note $f(d_1')> 0$. 
      \end{enumerate}
\end{itemize}
\end{enumerate}

\item[$(A2)$] At $c_i\notin\{c_1,c_r\}$.
\begin{enumerate}

    \item[$(j_4)$] $c_i$ is an isolated point,  $f>0$ on $(c_i,c_i+\epsilon)$. 
    Then  $f<0$ on $(c_i-\epsilon,c_i)$, by $(h_5)$.
    
\begin{itemize}
    \item[$(j_{4.1})$]  $f\geq 0$ on $(d_i''-\epsilon,d_i'']$ and $f(d'_i)= 0 $. Take $g_i=1$.
    
    \item[$(j_{4.2})$] $f\geq 0$ on $(d_i''-\epsilon,d_i'']$ and $f(d_i')> 0$. Take $g_i'$.

\item[$(j_{4.3})$]  $f(d_i'')=0$ and $f<0$ on $(d_i''-\epsilon,d_i'')$. Take $g_i''$.
\end{itemize}

    \item[$(j_5)$] $c_i$ is an isolated point, $f<0$ on $(c_i,c_i+\epsilon)$. 
    By $(h_6)$, $f\geq 0$ on $(d_i''-\epsilon, d_i'']$. Take $g_i''$.

    \item[$(j_6)$] $c_i$ is not an isolated point, $[c_i,c_i'']\subseteq K$. 
    Then $f<0$ on $(c_i-\epsilon,c_i)$, by $(h_5)$.
\begin{itemize}
    \item[$(j_{6.1})$]  $f(d_i')=0$, then by  $(h_6)$,  $f>0$ on $(d_i',d_i'+\epsilon)$.  Take $g_i=1$.

    \item[$(j_{6.2})$]  $f(d_i')>0$. Take $g_i'$.
    
\end{itemize}

\item[$(j_7)$] $c_i$ is not an isolated point, $[c_i',c_i]\subseteq K$. 
Then $f<0$  on $(c_i,c_i+\epsilon)$, by $(h_5)$.  By $(h_6)$, $f\geq 0$ on $(d_i''-\epsilon, d_i'']$. Take $g_i''$.

\end{enumerate}

\item[$(A3)$] At $c_r$.

\begin{enumerate}
    \item[$(j_8)$] $c_r$ is the maximal element of $K$. Then $f< 0$ on $(c_r -\epsilon, c_r)$, by $(h_8)$.
       \begin{itemize}
         
           \item[$(j_{8.1})$] $f(d_r')=0$, then $f>0$ on $(d_r', d_r'+\epsilon)$, by $(h_6)$. Take $g_r=1$.
           \item[$(j_{8.2})$] $f(d_r')>0$. Take $g_r'$.
           
       \end{itemize}
    
    \item[$(j_9)$] $c_r$ is not the maximal element of $K$ and $f> 0$ on $(c_r, c_r+ \epsilon)$ (note $f(d_r'')>0$). By $(h_5)$, $f< 0$ on $(c_r -\epsilon, c_r)$. 
\begin{itemize} 
\item[$(j_{9.1})$] Take $g_r'$ in the following cases.
\begin{enumerate} 
    \item $c_r$ is an  isolated point and $f(d_r')>0$.
    
\item $[c_r, c''_r]\subseteq K$ and $f(d_r')> 0$.

\end{enumerate}
    \item[$(j_{9.2})$] Take $g_r=1$ in the following cases.
    \begin{enumerate} 
    \item $c_r$ is an  isolated point and $f(d_r')= 0$.

   \item $[c_r, c''_r]\subseteq K$ and $f(d'_r)= 0$.
   
   \end{enumerate}
\end{itemize}
    
    \item[$(j_{10})$] $c_r$ is not the maximal element of $K$ and $f< 0$ on $(c_r, c_r+\epsilon)$ 
(note $f(d_r'')> 0$). Take $g_r''$.

\end{enumerate}

\end{enumerate}
\medskip

For each $i=1,\ldots, r$, we have defined $g_i\in T_{\mathcal{N}}$. Let 
$$G=g_1\cdots g_r \in T_{\mathcal{N}}$$

{\bf Claim 4: } We can choose
\begin{itemize}
\item[$(k_1)$] $b$ from $(c_1,d_1'')$ or $(d_1',c_1)$ such that $f(b)=0$. 
\item[$(k_2)$] a sequence $(b_n)\in F$ converging to $b$ such that $f(b_n)$ and $G(b_n)$ have same sign and $|G(b_n)|> \epsilon$. 
\item[$(k_3)$] Given $N\in \mathbb N$, we can choose $n$ large such that 
$$G_N(x)= \left(\frac {f(b_n)} {G(b_n)}\right) G(x)< \delta\hspace{.1in} \text{and}\hspace{.1in} G_N-G_{N+1}>0 \hspace{.1in} \text{on}\hspace{.1in} K.$$

\end{itemize}

{\bf{Claim 5:}} 
\begin{enumerate}
    \item [$(k_4)$] $deg(G_N)\leq deg(f)$, 
    \item [$(k_5)$] $f-G_N\geq 0$ on $[-N,N]\cap K$ and $G_N-G_{N+1}\geq 0$ on $K$.  
    \item [($k_6)$] $f-G_N=h' h$ for some $h'\in T_{\mathcal{N}}$ with $deg(h')\geq 1$ and $h\geq 0$ on $[-N,N]\cap K$.
\end{enumerate}

{\bf{Claim 6:}} For large $N$, both $f-G_N$ and $h$ are $\geq 0$ on $K$.
  \medskip

Assuming claim $4,5,6$ we will complete the proof of (\ref{thm.3}). For large $N$, $f-G_N=h'h$ for some $h'\in T_{\mathcal{N}}$ with $deg(h')\geq 1$ and $h\geq 0$ on $ K$. With $g=G_N$, we get $(\star)$. Thus it remains to prove claims $4,5,6$. 
\medskip

{\bf Proof of Claim 4:} 
\begin{enumerate}

\item[$(l_1)$]\label{c1} 
$c_1$ is not  the minimal element of $K$ and $f>0$ on $(c_1, c_1+\epsilon)$.
\begin{enumerate}
    \item[$(l_{1.1})$] $c_1$ is an isolated point, $f(d_1'')=0$ and $f<0$ on $(d_1''-\epsilon, d_1'')$.

There exists $b\in (c_1,d_1'')$ such that $f(b)=0$ and $f<0$ on $(b,b+\epsilon')$. 
Choose a decreasing sequence $b_n\in F$ converging to $b$ such that $f(b_n)<0$. Note $G(b)\neq 0$, so we may assume $G(b_n)\neq 0$, hence $|G(b_n)|> \epsilon$. Note $f(b_n)$ and $G(b_n)$ both are $<0$. Since $\frac {f(b_n)}{G(b_n)}\to 0$, we get 
$$G_N(x)= \frac {f(b_n)} {G(b_n)} G(x)< \delta \hspace{.1in}\text{on}\hspace{.1in} [-N,N]$$ for large $n$. Further, we can choose larger $n$ to define $G_{N+1}$ to ensure $G_N-G_{N+1}>0$ on $K$.

\item[$(l_{1.2})$] Either $c_1$ is an isolated point and $f\geq0$ on $(d''_1-\epsilon,d''_1]$ or $[c_1,c_1'']\subseteq K$. Then as we explained in $(j_{3.2})$, we have $f(d'_1)>0$ i.e. $f>0$ on $[d'_1,d'_1+\epsilon)$ and $f<0$ on $(c_1-\epsilon,c_1)$. 

There exist $b\in (d'_1,c_1)$ such that $f(b)=0$ and $f<0$ on $(b,b+\epsilon')$. Choose a decreasing sequence $b_n\in F$ converging to $b$ such that $f(b_n)<0$. Rest of the proof is as in $(l_{1.1})$.

\end{enumerate}

\item[$(l_2)$] 
$f<0$ on $(c_1, c_1+\epsilon)$. By $(h_6)$, $f\geq0$ on $(d''_1-\epsilon,d''_1]$.

 There exists $b\in (c_1,d_1'')$ such that $f(b)=0$ and $f<0$ on $(b-\epsilon',b)$. Choose an increasing sequence $b_n\in F$ converging to $b$ such that $f(b_n)<0$. 
Rest of the proof is as in $(l_{1.1})$. 

\item[$(l_3)$]
 $c_1$ is  the minimal element of $K$ and $f>0$ on $(c_1, c_1+\epsilon)$. This case is covered by $(h_3)$.
\end{enumerate}

Thus claim 4 is proved. 
\medskip

\begin{figure}[H]
    \centering
     \includegraphics[width=\linewidth]{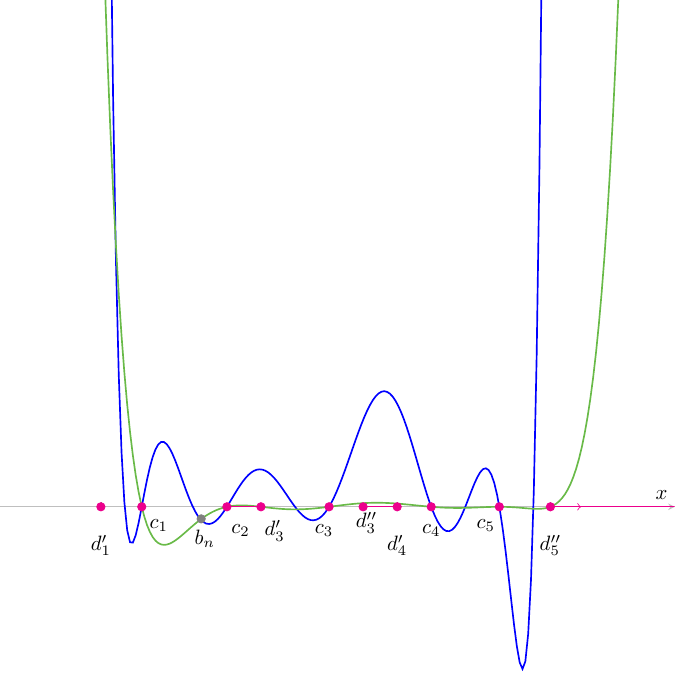}
    
  \caption{An example of 3.4.2}
  \label{fig}
\end{figure}
Here \raisebox{0.5ex}{\textcolor{blue}{\rule{0.7cm}{1pt}}} denotes $f(x)$, \raisebox{0.5ex}{\textcolor{magenta}{\rule{0.7cm}{1pt}}} denotes $K$ and \raisebox{0.5ex}{\textcolor{dpg}{\rule{0.7cm}{1pt}}} denotes $G_N(x)$.\\[1pt] \\
$g_1=(x-c_1)(x-c_2)$,~ $g_2=1$, ~$g_3=(x-c_3)(x-d'_3)$, ~$g_4=(x-c_4)(x-c_5)$, ~$g_5=(x-c_5)(x-d_5'')$.
\newpage 

{\bf Proof of Claim 5:} Note $f>0$ on $K^\circ$.

 \begin{enumerate}
\item [($l_4$)]
For every $c_i$, we have chosen a $g_i\in \{1, g'_i,g''_i\}$. 
\begin{enumerate}
    \item[$(l_{4.1})$] $g_i=1$ does not contribute to the degree of $G_N$.
    
    \item[$(l_{4.2})$] If $g_i=g'_i=(x-c_i)(x-d'_i)$, then $f$ has atleast one root $b'\in(d'_i,c_i)$.
    
    \item[$(l_{4.3})$] If $g_i=g''_i=(x-c_i)(x-d''_i)$, then $f$ has atleast one root $b''\in(c_i,d''_i)$.
\end{enumerate} 

If $f$ has atmost $2$ consecutive roots in the set $\partial K$, then by construction of $g_i$, $(k_4)$ follows. If $f$ has $\geq 3$ consecutive roots in the set $\partial K$, say $c_{i-1},c_{i},c_{i+1}$, then if  $g_i=1$, then we are done. For example, this will be the case when $f>0$ on $(c_i,c_i+\epsilon)\cup (d_i''-\epsilon,d_i'')\cup (d_i',d_i'+\epsilon)$ and  $f<0$ on $(c_i-\epsilon,c_i)$ with $d_i'=c_{i-1}$ and $d_i''=c_{i+1}$. If $g_{i}=(x-c_i)(x-c_{i+1})$ and $g_{i-1}=(x-c_{i-1})(x-c_i)$, then $f$ has  at least two roots $b,\,b'$ such that $b \in (c_{i-1}, c_{i})$ and $b' \in (c_{i}, c_{i+1})$.  So, $(k_4)$ follows.

\item [($l_5$)] 
Since $G(b_n)\to G(b)\neq0$ and $\frac {f(b_n)}{G(b_n)}\to 0$, choosing $n$ large, $G_N$ will satisfy $(k_5)$.

\item [($l_6$)]
Since $f(b_n)=G_N(b_n)$, we get $f(x)-G_N(x)=h'h$ with $h'=(x-b_n)(x-t)$ for $t\in\{c_1,d_1''\}$,  and $h\geq0 $ on $[-N,N]\cap K$. Since $h'\in T_{\mathcal{N}}$, by (\ref{CM}), $(k_6)$ follows. 
\end{enumerate}
  Thus claim 5 is proved.
\medskip

 {\bf Proof of Claim 6:}
     This is exactly same as the proof of claim $3$. Thus the proof of Theorem $\ref{1 thm} (1)$ is complete. 

To prove Theorem $\ref{1 thm}(2)$, just keep track of degrees in the proof of  Theorem $\ref{1 thm}(1)$ and use  $(\ref{sos})$. 

$\hfill\square$

\end{proof}


\section{Application of Theorem \ref{1 thm} - I}

In this section, we extend some results from Kuhlmann and Marshall  \cite{Sal1}, where it is proved for $F=\BR$.

 \begin{corollary}
Let $S=\{g_1, \ldots, g_s\}$ be an arbitrary subset of $F[x]$
such that  $Pos(K_S)=T_S$. Then each $f\in Pos(K_S)$ 
can be written as
$f = \sum_{e \in \{0,1\}^s} \tau_e g^e$,
where 
$\tau_e \in \sum F_{\geq 0} F[x]^2$ with  $deg(\tau_eg^e)\leq deg(f) + N$ for some $N$ depending on $g_1, \ldots, g_s$ only.
\end{corollary}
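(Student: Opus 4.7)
The plan is to apply Theorem \ref{1 thm} to the natural choice of generators of $K_S$ and then convert back to representations in terms of $S$. I treat the main case $\partial K_S\subset F$; the remaining case $\partial K_S\not\subset F$ is handled analogously once one identifies, from the results previewed in the introduction, an $F$-rational ``generalized natural choice'' (such as $-(x-c)(x-c')$ for a conjugate pair of boundary points) with respect to which Theorem \ref{1 thm} or a variant applies. So let $\mathcal{N}=\{h_1,\ldots,h_t\}\subset F[x]$ be the natural choice of generators of $K_S$, so $K_{\mathcal N}=K_S$. Since each $h_i\in Pos(K_S)=T_S$ by hypothesis, fix once and for all representations
$$h_i=\sum_{e\in\{0,1\}^s}\tau_{i,e}\,g^e,\qquad \tau_{i,e}\in\sum F_{\ge 0}F[x]^2,$$
and set $M_i:=\max_e\deg(\tau_{i,e}g^e)$; these $t$ constants depend only on $S$.

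Now, given $f\in Pos(K_S)$ with $\deg f=n$, Theorem \ref{1 thm} applied to $K_{\mathcal N}=K_S$ yields
$$f=\sum_{\epsilon\in\{0,1\}^t}\sigma_\epsilon h^\epsilon,\qquad \sigma_\epsilon\in\sum F_{\ge 0}F[x]^2,\quad \deg(\sigma_\epsilon h^\epsilon)\le n,$$
so in particular $\deg\sigma_\epsilon\le n-\deg h^\epsilon$. Substituting the chosen representations of each $h_i$ with $\epsilon_i=1$ and expanding $h^\epsilon=\prod_{i:\epsilon_i=1}\sum_e\tau_{i,e}g^e$ produces a sum of terms
$$\sigma_\epsilon\Bigl(\prod_{i:\epsilon_i=1}\tau_{i,e^{(i)}}\Bigr)\,g^{\sum_{i:\epsilon_i=1}e^{(i)}}.$$
Writing the exponent vector $\sum_{i:\epsilon_i=1}e^{(i)}=2\mathbf{a}+e'$ coordinatewise with $e'\in\{0,1\}^s$, we get $g^{\sum e^{(i)}}=(g^{\mathbf a})^2\,g^{e'}$, so each term takes the form $\tilde\tau\cdot g^{e'}$ with $\tilde\tau\in\sum F_{\ge 0}F[x]^2$, using that $\sum F_{\ge 0}F[x]^2$ is closed under products and under multiplication by squares. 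Regrouping the contributions by $e'$ yields the desired representation $f=\sum_{e'\in\{0,1\}^s}\tau_{e'}g^{e'}$ with $\tau_{e'}\in\sum F_{\ge 0}F[x]^2$.

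For the degree bound, each individual term just constructed has degree at most
$$\deg\sigma_\epsilon+\sum_{i:\epsilon_i=1}\deg(\tau_{i,e^{(i)}}g^{e^{(i)}})\;\le\;(n-\deg h^\epsilon)+\sum_{i:\epsilon_i=1}M_i\;=\;n+\sum_{i:\epsilon_i=1}(M_i-\deg h_i)\;\le\;n+N,$$
where $N:=\sum_{i=1}^t\max(M_i-\deg h_i,0)$ depends only on $S$. Hence $\deg(\tau_{e'}g^{e'})\le n+N$, as required. The main obstacle is essentially combinatorial bookkeeping: verifying that the sums-of-squares form of $\tau_{e'}$ survives the substitute--expand--reduce--regroup steps (which follows from multiplicative closure of $\sum F_{\ge 0}F[x]^2$), and --- more genuinely --- handling the case $\partial K_S\not\subset F$, where one must first produce an $F$-rational analogue of the natural choice of generators before the same substitution strategy can be deployed.
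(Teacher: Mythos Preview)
Your argument for the case $\partial K_S\subset F$ is correct and is precisely the approach the paper intends: the paper's proof reads ``Follow the proof of \cite[Cor 4.2]{Sal1} and use $(\ref{1 thm})$,'' and the Kuhlmann--Marshall argument is exactly the one you wrote out --- express the natural generators $h_i$ in $T_S$ (using the saturation hypothesis), apply the degree-bounded representation of Theorem~\ref{1 thm} with respect to $\mathcal N$, then substitute and regroup, tracking degrees.

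Regarding your residual concern about $\partial K_S\not\subset F$: the paper does not address this case here either. The corollary sits in the section ``Application of Theorem~\ref{1 thm}~--~I,'' and the only tool invoked is Theorem~\ref{1 thm}, which itself carries the hypothesis $\partial K\subset F$. So the intended reading is that this assumption is in force; you need not (and at this point in the paper cannot) supply the extra argument you sketch. Your instinct that the general case would require a degree-bounded analogue of Theorem~\ref{1 thm} for the generalized natural generators of Section~8 is reasonable, but that is beyond what the paper claims or proves for this corollary.
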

  
\begin{proof}
Follow the proof of  \cite[Cor 4.2]{Sal1} and use $(\ref{1 thm})$.
$\hfill \square$
\end{proof}

 \begin{theorem}\label{Non cpt sat}
Let $S=\{g_1,\ldots,g_s\}\subset F[x]$, $K=K_S$ with $\partial K\subset F$ and $K$ is non-compact. Let $\mathcal{N}$ be natural choice of generators of $K$. Then
 \begin{enumerate}
     \item  $Pos(K)=T_S$, i.e. $T_S$ is saturated if and only if $S$ contains natural choice of generators of $K$, upto scaling by $F_{> 0}$.
     \item  $Pos(K)=M_{\mathcal N}$, i.e. quadratic module $M_{\mathcal N}$ is saturated if and only if either $|\CN|\leq 1$ or $|\CN|=2$ and $K$ has an isolated point. 
 \end{enumerate}
 \end{theorem}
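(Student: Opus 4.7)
The plan is to derive Theorem \ref{Non cpt sat} from Theorem \ref{1 thm} combined with the $\BR$-analogues established in \cite{Sal1,Sal2}, via a scalar descent step.

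For part (1), the direction ($\Leftarrow$) is immediate: if $\mathcal N \subseteq S$ up to scaling by $F_{>0}$, then $T_{\mathcal N} \subseteq T_S \subseteq Pos(K)$, and Theorem \ref{1 thm} gives $T_{\mathcal N} = Pos(K)$, so $T_S = Pos(K)$. For ($\Rightarrow$), suppose $T_S = Pos(K)$. Since $\mathcal N \subseteq Pos(K) = T_S$ in $F[x]$, we have $T_{\mathcal N} \subseteq T_S$, so
$$T_S \otimes \BR \subseteq Pos(K) \otimes \BR = T_{\mathcal N} \otimes \BR \subseteq T_S \otimes \BR,$$
where the middle equality is \cite[Thm 2.2]{Sal1}. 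Hence $T_S \otimes \BR = Pos(K) \otimes \BR$, and the $\BR$-analogue of part (1) from \cite{Sal1} gives that $S$ contains $\mathcal N$ up to scaling by $\BR_{>0}$. Since any two nonzero polynomials in $F[x]$ that differ by a positive real scalar must in fact differ by a scalar in $F_{>0}$, we conclude.

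For part (2), Theorem \ref{1 thm} gives $M_{\mathcal N} \subseteq T_{\mathcal N} = Pos(K)$, so $M_{\mathcal N} = Pos(K)$ is equivalent to $M_{\mathcal N} = T_{\mathcal N}$. If $|\mathcal N| \leq 1$ this equality is immediate. If $|\mathcal N| = 2$ and $K$ has an isolated point, then up to reflection $K = \{a\} \cup [b,\infty)$ with $\mathcal N = \{x-a,(x-a)(x-b)\}$, and the identity
$$(x-a)^2(x-b) = (x-b)^2(x-a) + (b-a)(x-a)(x-b)$$
exhibits the only nontrivial product in $M_{\mathcal N}$ (since $b-a \in F_{>0}$), giving $T_{\mathcal N} = M_{\mathcal N}$. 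Conversely, if $|\mathcal N| \geq 3$, or $|\mathcal N| = 2$ without an isolated point, we follow \cite{Sal1,Sal2} to exhibit an explicit $f \in F[x] \cap Pos(K)$ outside $M_{\mathcal N}$; for instance, when $K = [c_1,c_2] \cup [a,\infty)$ with $c_1 < c_2 < a$ and $\mathcal N = \{x-c_1, (x-c_2)(x-a)\}$, the cubic $f = (x-c_1)(x-c_2)(x-a)$ works, as matching degrees in any representation $f = \sigma_0 + \sigma_1(x-c_1) + \sigma_2(x-c_2)(x-a)$ forces $\sigma_2 \in F_{\geq 0}$ constant, then evaluation at $c_1$ forces $\sigma_2 = 0$, and finally inspection on the gap $(c_2,a)$ produces a contradiction.

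The main obstacle is the scalar descent in part (1): a priori, the K-M theorem over $\BR$ only provides scalings in $\BR_{>0}$, and we must verify that these scalings actually lie in $F_{>0}$; this is immediate because the relevant scalars are ratios of matched nonzero $F$-polynomials, hence lie in $F$. The counterexamples needed for the converse direction in part (2) are algebraic combinations of generators already in $F[x]$ (as $\partial K \subset F$), so their construction descends from the $\BR$-case of \cite{Sal1,Sal2} without difficulty.
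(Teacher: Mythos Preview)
Your approach to part (1)($\Rightarrow$) differs from the paper's. The paper reruns the Kuhlmann--Marshall argument directly over $F$ (noting that since $[a,\infty)\subset K$ all elements of $S$ have positive leading coefficient, so degree bounds hold, and then analysing a putative representation of each natural generator in $T_S$). You instead tensor up: from $\mathcal N\subset Pos(K)=T_S$ you deduce $T_{\mathcal N}\otimes\BR\subseteq T_S\otimes\BR\subseteq Pos(K)\otimes\BR=T_{\mathcal N}\otimes\BR$, apply the $\BR$-theorem of \cite{Sal1} as a black box to get $\mathcal N\subset S$ up to $\BR_{>0}$-scaling, and then observe that a ratio of two proportional nonzero polynomials in $F[x]$ lies in $F$. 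This is correct and arguably cleaner; it avoids re-examining the internal structure of the K--M argument, at the cost of relying on the $\BR$-statement rather than just its method.

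For part (2) your ($\Leftarrow$) case analysis is incomplete. When $|\mathcal N|=2$ and $K$ has an isolated point, it is \emph{not} true that $K=\{a\}\cup[b,\infty)$ up to reflection: the configuration $K=(-\infty,a]\cup\{b\}\cup[d,\infty)$ with $a<b<d$ and $\mathcal N=\{(x-a)(x-b),\,(x-b)(x-d)\}$ is also possible, and your linear-factor identity does not apply there. The missing identity is
\[
(x-a)(x-b)^2(x-d)=\tfrac{b-a}{d-a}\,(x-d)^2(x-a)(x-b)+\tfrac{d-b}{d-a}\,(x-a)^2(x-b)(x-d),
\]
with both coefficients in $F_{>0}$, which places $g_1g_2$ in $M_{\mathcal N}$. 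With this addition your argument for (2) matches the paper's (both ultimately defer to \cite[Thm~2.5]{Sal1}).
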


\begin{proof}
(1) If $S$ contains the natural choice of generators of $K$, then by (\ref{1 thm}), we have $Pos(K)=T_S$.
Assume $Pos(K)=T_S$. Since $K_S$ is non-compact, replacing $x$ by $-x$, if necessary, we can assume that $[a,\infty) \subset K$ for some $a\in F$. Thus all the elements of $S$ have positive leading coefficient. The proof given by Kuhlmann and Marshall \cite[Thm 2.2]{Sal1} works in our case, so we will not repeat it. 
But we will give a simpler argument to prove the case that  $p(x)=(x-a)(x-b)\in S$, up to some scaling by $ F_{>0}$, where $a< b$, $a,b \in \partial K$ and $(a,b) \cap K = \emptyset$.  

Since $p \geq 0 $ on $K$, we get $p\in T_S$ by assumption. Since  $deg(p)=2$, we can write 
$$p= \sum \sigma_i+ \sum \sigma_jg_j + \sum \sigma_{kl}g_k g_l $$ where 
\begin{itemize}
    \item[$(m_1)$] $\sigma_i \in \sum F[x]^2$ of degree $0$ or $2$.
    \item[$(m_2)$] $\sigma_j \in F_{>0}$ and
$g_j\in S$ is linear or quadratic.
    \item[$(m_3)$]  $\sigma_{kl} \in  F_{>0}$  and $g_k, g_l \in S$ are linear.
\end{itemize}
   If $g_i \in S$ is linear, then it is increasing as $[a,\infty) \subset K$ and $g_i(a) \geq 0$ gives $g_i>0$ on $(a,b)$. Thus
$$p\geq \sigma_{i_1} g_{i_1} + \cdots + \sigma_{i_r}g_{i_r}~~  \text{on}~~ (a, b)$$ where $g_{i_j}$  are quadratic in $S$ which assume
some negative value on $(a , b)$ and $\sigma_{i_j}\in F_{>0}$. Note $r\geq 1$, since $p<0$ on $(a,b)$.  Since $g_{i_j}(a)\geq 0$ and $\sum \sigma_{i_j}(a) g_{i_j}(a)  \leq p(a)=0$, we get $g_{i_j}(a)=0$ for all  $j$. Similarly $p(b)=0$ gives  $g_{i_j}(b)=0$ for all $j$. Therefore, $g_{i_j}(x)=t(x-a)(x-b)$,
where $t\in F_{>0}$. So we are done.

(2) Follow the proof of \cite[Thm 2.5]{Sal1} and use $(\ref{1 thm})$.
$\hfill \square$
\end{proof}


\section{Generalization of Powers' result}

Let $S=\{g_1,\ldots,g_s\}\subset F[x_1,\ldots,x_n]$ be such that $K_S\subset \BR^n$ is compact and $f\in F[x_1,\ldots,x_n]$ such that $f>0$ on $K_S$. Schm\"{u}dgen \cite{Kon} proved that  $f\in T_S\otimes \BR$. Analogue of   Schm\"{u}dgen's result for quadratic module $M_S$ was considered by Putinar \cite{Putinar}. He proved that if $M_S\otimes \BR$ is archimedean, i.e. $N-\sum_1^n x_i^2\in M_S\otimes \BR$ for some $N\in \BN$, then $f\in M_S\otimes \BR$.

The descent problem for both results were considered by Powers over $\BQ$ \cite[Thm 5, 7]{Vict2}. The aim of this section is to indicate that the proof of Powers actually give both results over $F$. We will only indicate the necessary changes in Powers proof. See \cite{Vict2} for undefined terms in this section.

The following result is due to Schweighofer \cite[Thm 1 and 4.13]{Sch2}. 

\begin{theorem}\label{Sch thm1}
\begin{enumerate}
    \item Assume the preordered ring $(A,T)$ is archimedean, i.e. $H'(A,T)=A$. If $a\in A$ satisfies $a>_P 0$ for all $ P\in Sper(A,T)$, then $a\in T$.
    \item Let  $(A,T)$ be a  preordered $F$-algebra with $A$ of finite transcendence degree over $F$ and $F\subset H'(A,T)$. If $H(A,T)=A$, then $H'(A,T)=A$.
\end{enumerate}
\end{theorem}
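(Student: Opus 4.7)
The statement above is due to Schweighofer (cited, not proved by the authors); nonetheless I sketch the strategy.

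For part (1), the approach is the classical archimedean Positivstellensatz. Under the hypothesis $H'(A,T)=A$, every element of $A$ is arithmetically bounded by elements of $T$, so the real spectrum $\mathrm{Sper}(A,T)$ is compact in its spectral topology, and each $P\in \mathrm{Sper}(A,T)$ provides a well-defined continuous evaluation $\hat a(P)\in\BR$. The strict positivity $\hat a(P)>0$ for every $P$, combined with compactness, yields a uniform lower bound $\hat a\geq \epsilon>0$ for some rational $\epsilon$. One then invokes Jacobi's representation theorem (equivalently the Kadison-Dubois theorem): in an archimedean preordered ring, strict pointwise positivity on the real spectrum forces membership in the preordering.

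For part (2), the plan is to upgrade the geometric boundedness $H(A,T)=A$ (each element has square bounded by an element of $T$) to the arithmetic boundedness $H'(A,T)=A$ (each element $a$ admits $n\pm a\in T$). The trivial inclusion $H'(A,T)\subseteq H(A,T)$ holds via the identity $n^2-a^2=(n-a)(n+a)\in T$, so the problem is to reverse this. The idea is induction on the transcendence degree of $A$ over $F$, with base case supplied by the hypothesis $F\subseteq H'(A,T)$. In the inductive step, one adjoins a single element $a$ with $n-a^2\in T$ to a sub-ring $B\subseteq A$ already known to sit in $H'(A,T)$; one applies part (1) to a suitable sub-preordering of $B[a]$ that has been made archimedean by the previously secured bounds, using either the minimal-polynomial relation (in the algebraic case) or rational approximation coming from $F$ (in the transcendental case), to deduce $n'\pm a\in T$.

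The main obstacle I foresee is in part (2): one must carefully transfer between points of $\mathrm{Sper}(A,T)$ and those of successive sub-preorderings, and the role of the finite transcendence degree is precisely to prevent pathological enlargement during the induction. The hypothesis $F\subseteq H'(A,T)$ is essential to supply the scalar rationals used in formulating and comparing the arithmetic bounds at each inductive stage; without it, one cannot convert strict positivity of $\hat a$ into an actual containment $n'\pm a\in T$.
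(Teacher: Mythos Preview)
You correctly observe that the paper does not prove this theorem; it merely cites Schweighofer \cite[Thm 1 and 4.13]{Sch2} and uses the result as a black box. There is therefore no proof in the paper to compare against, and your acknowledgment of this is appropriate.

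Your sketch is broadly faithful to Schweighofer's actual argument. Part (1) is indeed the archimedean Positivstellensatz in the Kadison--Dubois/Jacobi form. For part (2), Schweighofer's proof does proceed by induction on transcendence degree, but the inductive mechanism is somewhat different from what you describe: rather than adjoining a single element and applying part (1) to a sub-preordering, Schweighofer works with towers of subrings and uses a key algebraic identity to show that $H'(A,T)$ is integrally closed in $H(A,T)$, together with the fact that $H(A,T)/H'(A,T)$ is an algebraic extension once the transcendence degree is controlled. Your ``minimal-polynomial relation (in the algebraic case) or rational approximation (in the transcendental case)'' dichotomy is not quite how the argument runs, and the step ``apply part (1) to a suitable sub-preordering of $B[a]$'' would need a genuine verification that this sub-preordering is archimedean, which is essentially the content of what you are trying to prove. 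So while the high-level shape (induction on transcendence degree, with $F\subset H'(A,T)$ anchoring the base) is right, the inductive step as you describe it has a gap.
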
 
 
The following result is due to Powers \cite[Them 5]{Vict2} in case $F=\BQ$. 

\begin{theorem}\label{Pow thm1}
Let $S=\{g_1,\ldots,g_s\}\subset F[x_1,\ldots,x_n]$ be such that $K_S\subset \mathbb{R}^n$ is compact. If $f\in F[x_1,\ldots,x_n]$ is $>0$ on $K_S$, then
$f\in T_S$. In particular, $T_S$ is archimedean.
\end{theorem}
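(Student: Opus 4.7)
The plan is to reproduce Powers' argument almost verbatim, observing that its only field-specific ingredient is a trivial inclusion. Let $A = F[x_1,\ldots,x_n]$. The proof proceeds in two stages: first show that the preordering $T_S$ is archimedean in $A$ (which is the ``in particular'' clause of the statement), then apply Theorem \ref{Sch thm1}(1) to deduce $f \in T_S$.

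For archimedeanness, I would first verify $H(A, T_S) = A$. Since $K_S \subset \BR^n$ is compact, there is $N \in \BN$ with $N^2 - x_i^2 \geq 0$ on $K_S$ for each $i$. Tarski's transfer principle then extends this inequality to every real closed field $R$ containing $F$ (with its fixed ordering inherited from $F \subset \BR$): whenever $\alpha \in R^n$ satisfies $g_j(\alpha) \geq 0$ for $j = 1, \ldots, s$, we have $N^2 - \alpha_i^2 \geq 0$ in $R$. Consequently $N \pm x_i \geq_P 0$ for every $P \in Sper(A, T_S)$, so each $x_i$ lies in $H(A, T_S)$; as $H(A, T_S)$ is a subring of $A$ containing $F$ and all the $x_i$, we obtain $H(A, T_S) = A$. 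For the other hypothesis of Theorem \ref{Sch thm1}(2), note that for $\alpha \in F$ we may pick $N \in \BN$ with $N \geq |\alpha|$, whence $N \pm \alpha \in F_{\geq 0} \subset T_S$, so $F \subset H'(A, T_S)$. Since $A$ has finite transcendence degree over $F$, Theorem \ref{Sch thm1}(2) now yields $H'(A, T_S) = A$, i.e.\ $T_S$ is archimedean.

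Given $f \in F[x_1,\ldots,x_n]$ with $f > 0$ on $K_S$, it suffices by Theorem \ref{Sch thm1}(1) to show $f >_P 0$ for every $P \in Sper(A, T_S)$. Each such $P$ corresponds to an evaluation $x_i \mapsto \alpha_i$ into some real closed field $R \supset F$ with $g_j(\alpha) \geq 0$ for all $j$. Applying Tarski transfer to the hypothesis $f > 0$ on $K_S \subset \BR^n$ yields $f(\alpha) > 0$ in $R$, i.e.\ $f >_P 0$. Theorem \ref{Sch thm1}(1) then delivers $f \in T_S$.

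The only point that requires an extra word when generalizing from $\BQ$ to arbitrary $F \subset \BR$ is the legitimacy of Tarski transfer between $\BR$ and the residue real closed field at an arbitrary $P \in Sper(A, T_S)$: the principle demands that both fields induce the \emph{same} ordering on $F$. This is automatic because $F_{\geq 0} \subset T_S \subset P$ forces the restriction of $P$ to $F$ to coincide with the ordering inherited from the fixed embedding $F \hookrightarrow \BR$. Beyond this observation, every step above is formal and independent of $F$, which is why Powers' proof descends without modification; this is also where the main (though mild) obstacle lies, since without the inclusion $F_{\geq 0} \subset T_S$ built into our definition of $T_S$, orderings not coming from the given embedding $F \hookrightarrow \BR$ could in principle appear in $Sper(A, T_S)$ and spoil the transfer.
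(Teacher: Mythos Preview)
Your proof is correct and follows essentially the same route as the paper's: verify $H(A,T_S)=A$, note $F\subset H'(A,T_S)$ via $F_{\geq 0}\subset T_S$, apply Theorem~\ref{Sch thm1}(2) to get archimedeanness, then apply Theorem~\ref{Sch thm1}(1). The only cosmetic difference is that the paper phrases the first and last steps as ``$K_S$ is dense in $Sper(A,T_S)$'' whereas you invoke Tarski transfer directly; these are equivalent formulations of the same fact, and your explicit remark that $F_{\geq 0}\subset T_S$ pins down the ordering on $F$ at every $P\in Sper(A,T_S)$ is exactly the point the paper is gesturing at.
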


\begin{proof}
Let $A=F[x_1,\ldots,x_n]$. Follow the proof of Powers and note that
\begin{enumerate}
    \item Every element of $A$ is bounded on compact set $K_S$
    \item $H(A,T_S)=A$ as $K_S$ is dense in $Sper (A,T_S)$
    \item $F\subset H'(A,T_S)$ as $F_{\geq 0}\subset T_S$. 
\end{enumerate}
  By $(\ref{Sch thm1}(2))$, $H'(A,T_S)=A$. Now apply $(\ref{Sch thm1})(1)$ to complete the proof. $\hfill \square$
\end{proof}
\medskip

The following result is due to Powers \cite[Theorem 7]{Vict2} in case $F=\BQ$.

\begin{theorem}\label{Pow thm2}
 Let $S=\{g_1,\ldots,g_s\}\subset F[x_1,\ldots,x_n]$ and 
$g_{s+1}=N-\sum_{1}^n x_i^2\in M_S$ for some $N\in \mathbb{N}$.
If $f\in F[x_1,\ldots,x_n]$ is $>0$ on $K_S$, then  $f\in M:=M_{S\cup\{g_{s+1}\}}$.
\end{theorem}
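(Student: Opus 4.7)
The plan is to mirror the proof of Theorem~\ref{Pow thm1}, replacing the preordering $T_S$ by the quadratic module $M=M_{S\cup\{g_{s+1}\}}$ throughout. Set $A=F[x_1,\ldots,x_n]$. First I would observe that $K_M=K_S$: since $g_{s+1}=N-\sum x_i^2\in M_S$, the inequality $\sum x_i^2\le N$ already holds on $K_S$, so adjoining $g_{s+1}$ to the generators does not shrink the semi-algebraic set. In particular $K_M$ is compact and $f>0$ on $K_M$.

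Next I would verify the hypotheses of Theorem~\ref{Sch thm1}(2) for $(A,M)$. Because $g_{s+1}\in M$ and $(x_i\pm 1)^2\in M$, a routine combination bounds each $x_i$, and hence every element of $A$, modulo $M$; this gives $H(A,M)=A$. Since $F_{\ge 0}\subset M$, for every $\alpha\in F$ there exists $m\in\BN$ with $m\pm\alpha\in F_{\ge 0}\subset M$, so $F\subset H'(A,M)$. As $A$ has finite transcendence degree over $F$, Theorem~\ref{Sch thm1}(2) yields $H'(A,M)=A$, i.e. $M$ is archimedean. The archimedean property forces $K_M$ to be dense in $\mathrm{Sper}(A,M)$, so the hypothesis $f>0$ on $K_M$ implies $f>_P 0$ at every $P\in\mathrm{Sper}(A,M)$. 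Applying Theorem~\ref{Sch thm1}(1) then gives $f\in M$.

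The main obstacle is that Theorem~\ref{Sch thm1} as cited is stated for preorderings, whereas here $M$ is only a quadratic module. One must therefore check that Schweighofer's ring-of-bounded-elements machinery applies to quadratic modules and, more delicately, that the Kadison--Dubois/Jacobi type density of $K_M$ in $\mathrm{Sper}(A,M)$ holds for archimedean quadratic modules (not just preorderings). Both facts are available in Schweighofer's work, which is precisely what allows the Powers argument to go through verbatim once $F_{\ge 0}\subset M$ is used in place of the stronger $F_{\ge 0}F[x]^2\subset T_S$ hypothesis.
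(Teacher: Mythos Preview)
Your approach differs from the paper's. The paper does \emph{not} mirror the proof of Theorem~\ref{Pow thm1} for this result; instead it follows Powers' original argument for \cite[Thm~7]{Vict2}, which is Schweighofer's constructive proof of Putinar's theorem via a P\'olya-type substitution. One introduces a map $\phi:F[y_1,\ldots,y_{2n}]\to A$ with each $\phi(y_i)\in \sum A^2+(N-\sum_j x_j^2)\subset M$, transforms $f$ into a polynomial $q$, obtains (via P\'olya) a polynomial $G$ with non-negative coefficients satisfying $\phi(G)=q$, and reads off $f\in M$ from the fact that $\phi$ sends every monomial into $M$. The descent to $F$ is then a matter of checking that $q\in A$, that $G\in F_{\ge 0}[y_1,\ldots,y_{2n}]$, and that the weights stay in $F_{\ge 0}$; this is what the paper records.

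Your abstract route can be made to work, but not exactly as you wrote it. The obstacle you flag is genuine: Theorem~\ref{Sch thm1}(1) applied to $\mathrm{Sper}(A,M)$ would only place $f$ in the preordering generated by $M$, not in $M$ itself, since an ordering containing $M$ automatically contains that preordering. What you actually need is Jacobi's representation theorem for archimedean quadratic modules: if $M$ is archimedean and $f>0$ on the space of $M$-positive ring homomorphisms $A\to\BR$, then $f\in M$. The key observation making this descend to $F$ (which you gesture at) is that $F_{\ge 0}\subset M$ forces any such homomorphism to restrict to an order-preserving map $F\to\BR$, hence to the inclusion, so the character space is exactly $K_S$. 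Also note that once $N-\sum x_i^2\in M$ the archimedean property is immediate by elementary manipulations; invoking Theorem~\ref{Sch thm1}(2) is unnecessary (and again stated only for preorderings). With these corrections your argument is valid and shorter, at the cost of importing Jacobi's theorem, which the paper does not state; the paper's constructive route has the advantage that descent is verified by inspecting explicit data.
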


\begin{proof}
Let $A=F[x_1,\ldots,x_n]$. 
 Follow the proof of \cite[Thm 7]{Vict2} and note that
 \begin{enumerate}
\item  $q\in A$, 
\item $G\in F_{\geq 0}[y_1,\ldots,y_{2n}]$, 
\item $\phi(y_i)\in \sum A^2 +(N-\sum_1^n x_i^2)$ and 
\item $\phi(G)=q\in \sum F_{\geq 0} A^2 +
\sum F_{\geq 0} A^2 (N-\sum_1^n x_i^2)$. 
\end{enumerate}
Hence $f\in M$.
  $\hfill \square$

\end{proof}


\section{Application of Theorem \ref{1 thm} - II}

In this section, using (\ref{Pow thm1}, \ref{Pow thm2}),  we extend some results from Kuhlmann, Marshall and Schwartz \cite{Sal2}, where it is proved for $F=\BR$.

\begin{corollary}\label{1st cor}
Let  $S=\{g_1, \ldots, g_s\}\subset F[x_1,\ldots,x_n]$ be such that  $K_S$ is compact. Let $f, g\in F[x_1,\ldots,x_n]$ be $ \geq 0$ on $K_S$. Assume $f$ and $g$ are relatively prime modulo the ideal $T_S \cap -T_S$ and that $fg \in T_S$. Then $f,g \in T_S$.
\end{corollary}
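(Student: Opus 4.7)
My plan is to adapt the proof of the analogous statement over $\BR$ from Kuhlmann--Marshall--Schwartz \cite{Sal2}, with the role of Schm\"udgen's theorem played by its descent to $F$, namely Theorem \ref{Pow thm1}.

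First, by Theorem \ref{Pow thm1}, compactness of $K_S$ implies that $T_S$ is archimedean in $A := F[x_1,\ldots,x_n]$: every $a \in A$ satisfies $N \pm a \in T_S$ for some $N \in \BN$. Two consequences will be used. On one hand, since $f, g \geq 0$ on the compact set $K_S$, the polynomials $f + \epsilon$ and $g + \epsilon$ are strictly positive on $K_S$ for every $\epsilon \in F_{>0}$, and Theorem \ref{Pow thm1} gives $f + \epsilon,\, g + \epsilon \in T_S$. On the other hand, archimedean-ness forces the image of every polynomial to be bounded by an element of $F$ in any ordered residue-field extension compatible with $T_S$. Coprimality of $f, g$ modulo $I := T_S \cap -T_S$ then yields $p, q \in A$ and $h \in I$ with $pf + qg = 1 + h$.

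Next I would invoke the standard Positivstellensatz that any preordering equals the intersection of the prime cones $P$ extending it. Thus, to show $f \in T_S$ it suffices to show $\bar f \geq 0$ in the real closure $R_P$ of $\mathrm{Frac}(A/\mathrm{supp}(P))$ for every prime cone $P \supset T_S$. In such an $R_P$, three facts hold simultaneously: $\bar p\bar f + \bar q\bar g = 1$ (using $I \subset \mathrm{supp}(P)$), $\bar f\bar g \geq 0$ (from $fg \in T_S \subset P$), and $\bar p,\bar q,\bar f,\bar g$ are $F$-bounded in $R_P$; moreover the memberships $f+\epsilon,\,g+\epsilon \in T_S$ for all $\epsilon \in F_{>0}$ force each of $\bar f, \bar g$ to be either non-negative or a negative $F$-infinitesimal in $R_P$. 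A short case analysis then finishes the argument. If $\bar f$ were a negative infinitesimal, then either $\bar g$ is also a negative infinitesimal --- in which case $\bar p\bar f + \bar q\bar g$ is $F$-infinitesimal (bounded times infinitesimal), contradicting its equality to $1$ --- or $\bar g \geq 0$, in which case $\bar f\bar g \leq 0$ together with $\bar f\bar g \geq 0$ forces $\bar g = 0$, so $\bar p\bar f = 1$ renders $\bar f$ invertible in $R_P$ and hence non-infinitesimal, another contradiction. So $\bar f \geq 0$ in $R_P$, i.e.\ $f \in P$. Varying $P$ gives $f \in T_S$, and by the symmetry of the hypotheses in $f, g$ also $g \in T_S$.

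The main obstacle is the last case analysis in the real closure $R_P$: one must verify carefully that the archimedean hypothesis really translates into the $F$-boundedness required to rule out the infinitesimal scenarios, and that the dichotomy ``non-negative versus negative infinitesimal'' for $\bar f$ and $\bar g$ is set up with the right quantifiers. Once these are properly in place, each contradiction is a one-line computation, and the rest is the standard Krull-type Positivstellensatz that matches the exposition in \cite{Sal2}.
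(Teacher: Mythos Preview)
Your argument has a fatal gap at the step where you ``invoke the standard Positivstellensatz that any preordering equals the intersection of the prime cones $P$ extending it.'' This statement is false. By the abstract Positivstellensatz, $\bigcap_{P\supset T_S}P=\{a\in A:at=a^{2m}+s\text{ for some }t,s\in T_S,\ m\geq 0\}$, and for $A=F[x_1,\dots,x_n]$ with $T_S$ finitely generated, a Tarski transfer argument identifies this intersection with $Pos(K_S)$. Thus your case analysis in the real closures $R_P$, which concludes only with $\bar f\geq 0$, merely reproves the \emph{hypothesis} $f\geq 0$ on $K_S$; it does not touch the conclusion $f\in T_S$. The archimedean property does not rescue this: for $n\geq 3$ and $K_S$ compact with nonempty interior one always has $T_S\subsetneq Pos(K_S)$ (Scheiderer), so any $h\in Pos(K_S)\setminus T_S$ lies in every prime cone over $T_S$ yet not in $T_S$. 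Nor can you fall back on Theorem \ref{Sch thm1}(1), which requires \emph{strict} positivity $a>_P 0$ for all $P$; your analysis does not, and cannot, exclude $\bar f=0$ in $R_P$ (take $P$ supported at a zero of $f$ in $K_S$).

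The paper, following \cite[Cor.~2.2]{Sal2}, does not pass through prime cones. The engine is the algebraic ``Basic Lemma'' \cite[2.1]{Sal2}: from archimedeanness one extracts integer bounds $N$ with $N\pm s,\ N\pm t\in T_S$, and combining these with the B\'ezout relation $sf+tg\equiv 1\pmod{T_S\cap -T_S}$, the membership $fg\in T_S$, and the fact (which you correctly obtained from Theorem \ref{Pow thm1}) that $f+\epsilon,\ g+\epsilon\in T_S$ for every $\epsilon\in F_{>0}$, one produces an explicit representation placing $f$ and $g$ in $T_S$. Your first paragraph assembles exactly these inputs; what is missing is carrying out that direct algebraic manipulation in place of the incorrect Krull-type reduction.
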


\begin{proof}
Follow the proof of \cite[Cor 2.2]{Sal2} with
$C = K_S$, $A = F[x_1,\ldots, x_n]/(T_S \cap -T_S)$ and use $(\ref{Pow thm1})$.
$\hfill \square$
\end{proof}

\begin{corollary}\label{prime}
 Assume $f, g \in F[x_1,\ldots,x_n]$ are relatively prime. Assume $K_{\{ fg \}}$ is compact and $f, g \geq 0$ on $K_{\{ fg \}}$. Then
\begin{enumerate}
    \item  There exist $\sigma, \tau \in \sum F_{\geq 0} F[x_1,\ldots,x_n]^2$ such that $1 = \sigma f + \tau g$.
    \item $M_{\{fg \}} = M_{\{f, g\}} = T_{\{ fg \}} = T_{\{f, g\}}$.
\end{enumerate}
\end{corollary}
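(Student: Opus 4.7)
The plan is to follow Kuhlmann--Marshall--Schwartz \cite[Cor 2.3]{Sal2}, using Corollary \ref{1st cor} (which is available over $F$ via \ref{Pow thm1}) as the central tool. The hypothesis $f,g\geq 0$ on $K_{\{fg\}}$ combined with the trivial inclusion $K_{\{f,g\}}\subseteq K_{\{fg\}}$ forces $K_{\{fg\}}=K_{\{f,g\}}$, a compact basic closed semi-algebraic set; hence by \ref{Pow thm1} both $T_{\{fg\}}$ and $T_{\{f,g\}}$ are archimedean.

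For (1), I would apply Corollary \ref{1st cor} to the pair $f,g$ with $S=\{fg\}$. Its hypotheses hold: $fg\in T_S$ trivially, $f,g\geq 0$ on $K_S$ by assumption, and $f,g$ are relatively prime modulo $T_S\cap -T_S$ because $T_S\cap -T_S$ lies inside the real vanishing ideal of $K_S$ (by archimedeanness) and $f,g$ are coprime in $F[x_1,\ldots,x_n]$. The conclusion is $f,g\in T_{\{fg\}}$, yielding representations
$$f=\alpha_0+\alpha_1 fg,\qquad g=\beta_0+\beta_1 fg,$$
with $\alpha_i,\beta_i\in\sum F_{\geq 0}F[x]^2$. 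Rewriting the first as $\alpha_0=f(1-\alpha_1 g)$ shows $f\mid \alpha_0$. Combined with the structure of $\alpha_0$ as a weighted sum of squares, coprimality of $f,g$, and the real Nullstellensatz applied to the vanishing locus of $f$, I would argue $\alpha_0=\sigma f^2$ for some $\sigma\in\sum F_{\geq 0}F[x]^2$, giving $1=\sigma f+\alpha_1 g$ and hence the Bezout identity with $\tau:=\alpha_1$.

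For (2), the inclusions $M_{\{fg\}}\subseteq M_{\{f,g\}}\subseteq T_{\{f,g\}}$ are obvious and $T_{\{fg\}}=M_{\{fg\}}$ holds by definition (only one generator). It suffices to prove $T_{\{f,g\}}\subseteq M_{\{fg\}}$. Given $h=s_0+s_1 f+s_2 g+s_3 fg\in T_{\{f,g\}}$ with $s_i$ weighted sums of squares, use $1=\sigma f+\tau g$ to rewrite
$$s_1 f=s_1 f(\sigma f+\tau g)=s_1\sigma f^2+s_1\tau fg,$$
where both summands lie in $M_{\{fg\}}$; treat $s_2 g$ analogously. Since $s_0$ and $s_3 fg$ are already in $M_{\{fg\}}$, this shows $h\in M_{\{fg\}}$ and closes all the equalities.

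The main obstacle is the divisibility step $\alpha_0=\sigma f^2$ used in the derivation of the Bezout identity. This requires a careful analysis splitting $f$ into its $F$-irreducible factors and handling those with real zeros (via the real Nullstellensatz, which forces each square summand of $\alpha_0$ to be divisible by every real-irreducible factor of $f$) separately from those without real zeros (which are positive definite on $\BR^n$ and require a sign argument together with coprimality with $g$). Carrying out this case analysis cleanly while preserving the $\sum F_{\geq 0}F[x]^2$ structure is the most delicate part of the argument.
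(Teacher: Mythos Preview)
Your plan coincides with the paper's: both defer to \cite[Cor.~2.3]{Sal2}, using Corollary~\ref{1st cor} (hence \ref{Pow thm1}) to obtain $f,g\in T_{\{fg\}}$ and then establishing (1), from which (2) follows. Your derivation of (2) from (1) is correct, apart from a small slip: the inclusion $M_{\{fg\}}\subseteq M_{\{f,g\}}$ is not ``obvious'' but needs $fg\in M_{\{f,g\}}$, which again follows from (1) just as in your argument for $T_{\{f,g\}}\subseteq M_{\{fg\}}$.

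The real gap is the divisibility step in (1). From $\alpha_0=f(1-\alpha_1 g)$ with $\alpha_0$ a weighted sum of squares you want $\alpha_0=\sigma f^2$, which would force $1=\sigma f+\alpha_1 g$ for the \emph{particular} $\alpha_1$ coming from the $T_{\{fg\}}$-representation of $f$. This does not follow. For an indefinite irreducible factor $p$ of $f$ one does get $p\mid h_i$ for every square summand, but that only makes the $p$-multiplicity of $\alpha_0$ even, not $\ge 2\,e_p(f)$; for a definite factor $p>0$ no such divisibility holds at all, and your ``sign argument together with coprimality with $g$'' does not supply it. In fact, for $n\ge 2$ conclusion (1) can fail outright when ``relatively prime'' is read as $\gcd=1$: with $f=x_1^2+x_2^2-x_1$ and $g=2x_1-x_1^2-x_2^2$ one has $\gcd(f,g)=1$, the set $K_{\{fg\}}$ is the compact region between the internally tangent circles $\{f=0\}$ and $\{g=0\}$, and $f,g\ge 0$ there, yet $f(0,0)=g(0,0)=0$ makes any identity $1=\sigma f+\tau g$ impossible. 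In \cite{Sal2} ``relatively prime modulo $T\cap -T$'' is used in the sense of \emph{comaximal}, so a genuine B\'ezout relation $pf+qg=1$ is part of the input; the upgrade to sum-of-squares coefficients then proceeds from that relation together with archimedeanness of $T_{\{fg\}}$, not via the $f^2\mid\alpha_0$ route you sketch.
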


\begin{proof}
Follow the proof of \cite[Cor 2.3]{Sal2} with $C=K_{\{fg\}}$, $A=F[x_1,\ldots,x_n]$ and use $(\ref{Pow thm1})$.
$\hfill \square$
\end{proof}

\begin{corollary}\label{cor.1}
Let  $S=\{g_1, \ldots, g_s\}\subset F[x_1,\ldots,x_n]$ be such that  $K_S$ is compact. Let $f\in F[x_1,\ldots,x_n]$ be $ \geq 0$ on $K_S$ and
$
f \in T_S + f \sqrt{(f) +  (T_S \cap -T_S)}.
$
Then $f \in T_S$.
\end{corollary}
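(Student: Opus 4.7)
The plan is to reduce to Corollary \ref{1st cor} applied to the pair $(f,\,1-g)$ for a suitable $g$, in the spirit of \cite[Cor 2.4]{Sal2}. First I would unpack the hypothesis: write $f=t+fg$ with $t\in T_S$ and $g\in\sqrt{(f)+(T_S\cap -T_S)}$, so there exist $m\geq 1$, $h\in F[x_1,\ldots,x_n]$, and $s\in T_S\cap -T_S$ satisfying $g^m=fh+s$. Rearranging the defining identity gives
$$f(1-g)=t\in T_S,$$
which is the candidate product to feed into Corollary \ref{1st cor}.

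Next I would verify the two hypotheses of that corollary. For non-negativity of $1-g$ on $K_S$: since every element of $T_S\cap -T_S$ is both $\geq 0$ and $\leq 0$ on $K_S$, it vanishes there, so $g^m=fh$ pointwise on $K_S$. If $x\in K_S$ with $f(x)=0$, then $g(x)^m=0$, hence $1-g(x)=1>0$. If $f(x)>0$, then $f(x)(1-g(x))=t(x)\geq 0$ forces $1-g(x)\geq 0$. For coprimality of $f$ and $1-g$ modulo $T_S\cap -T_S$, I would use the telescoping identity
$$1-g^m=(1-g)\bigl(1+g+g^2+\cdots+g^{m-1}\bigr),$$
together with $g^m\equiv fh\pmod{T_S\cap -T_S}$, to obtain
$$1\equiv fh+(1-g)\bigl(1+g+\cdots+g^{m-1}\bigr)\pmod{T_S\cap -T_S},$$
so that $1\in (f)+(1-g)+(T_S\cap -T_S)$. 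Corollary \ref{1st cor} then yields $f\in T_S$ (and $1-g\in T_S$ as a byproduct).

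The only delicate point is the non-negativity check at points of $K_S$ where $f>0$, which crucially uses $t\in T_S\subseteq Pos(K_S)$ together with the ring-level identity $f(1-g)=t$; everything else is bookkeeping with the ideal $T_S\cap -T_S$ and the geometric fact that this ideal sits inside the vanishing ideal of $K_S$. Identifying the right auxiliary factor, namely $1-g$, is the one insight that makes the whole argument reduce cleanly to Corollary \ref{1st cor}; once it is in hand, no new algebraic input (beyond Powers' descent via Theorem \ref{Pow thm1}, already built into Corollary \ref{1st cor}) is required.
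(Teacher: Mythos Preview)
Your proposal is correct and is precisely the argument the paper intends: the paper's own proof merely says ``Follow the proof of \cite[Cor 2.4]{Sal2} and use (\ref{1st cor}),'' and what you have written is exactly that proof, with the auxiliary factor $1-g$, the verification that $1-g\geq 0$ on $K_S$, and the coprimality via $1=fh+(1-g)(1+g+\cdots+g^{m-1})$ modulo $T_S\cap -T_S$.
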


\begin{proof}
Follow the proof of \cite[Cor 2.4]{Sal2} and use $(\ref{1st cor})$.
$\hfill \square$
\end{proof}

\begin{corollary}
Let  $S=\{g_1, \ldots, g_s\}\subset F[x_1,\ldots,x_n]$ be such that  $K_S$ is compact. Let  $f\in F[x_1,\ldots,x_n]$ be $ \geq 0$ on $K_S$ and $f = \sigma + \tau b$, where $\sigma, \tau \in T_S$ and $b$ is such that $f = 0 \Rightarrow b > 0$ on $K_S$. Then $f \in T_S$.
\end{corollary}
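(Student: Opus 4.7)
The approach is to combine Corollary~\ref{1st cor} with a perturbation that brings Theorem~\ref{Pow thm1} into play. Set $Z = \{x \in K_S : f(x) = 0\}$, a compact subset of $K_S$. If $Z = \emptyset$, then $f > 0$ on the compact set $K_S$ and Theorem~\ref{Pow thm1} gives $f \in T_S$ at once; if instead $f \equiv 0$ on $K_S$, then the hypothesis forces $b > 0$ on $K_S$, so $b \in T_S$ by Theorem~\ref{Pow thm1}, whence $f = \sigma + \tau b \in T_S$. So we may assume $\emptyset \neq Z \subsetneq K_S$.

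By the hypothesis $b > 0$ on $Z$, together with continuity and compactness, $b > \epsilon$ on some neighborhood $U$ of $Z$ in $K_S$ for some $\epsilon > 0$, while $f \geq \eta > 0$ on the compact set $K_S \setminus U$. Consequently $b + Mf > 0$ on all of $K_S$ for every sufficiently large $M \in \BN$, and Theorem~\ref{Pow thm1} yields $u := b + Mf \in T_S$. Substituting $b = u - Mf$ into $f = \sigma + \tau b$ gives
\[
f(1 + M\tau) \;=\; \sigma + \tau u \;\in\; T_S.
\]
Note that $1 + M\tau \in T_S$ and $1 + M\tau \geq 1$ on $K_S$, so $f$ and $1 + M\tau$ share no common real zero in $K_S$.

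To finish, I would convert this geometric disjointness into the algebraic relative primeness modulo $T_S \cap -T_S$ required by Corollary~\ref{1st cor}. Since $T_S$ is archimedean by Theorem~\ref{Pow thm1}, one has $V_\BR(T_S \cap -T_S) = K_S$; the real Nullstellensatz applied to $(f, 1 + M\tau) + (T_S \cap -T_S)$, whose real zero set is empty, then produces a certificate of comaximality modulo $T_S \cap -T_S$. Corollary~\ref{1st cor} delivers $f \in T_S$. The main obstacle is precisely this last step: turning "no common zero on $K_S$" into an algebraic comaximality identity requires the archimedean real Nullstellensatz together with the identification $V_\BR(T_S \cap -T_S) = K_S$. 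An alternative route would apply Corollary~\ref{cor.1}: rearrange $f(1 + M\tau) = \sigma + \tau u$ as $f \in T_S + f(-M\tau)$ and verify $\tau \in \sqrt{(f) + (T_S \cap -T_S)}$, using that $\tau$ vanishes on $Z$ (from the pointwise identity $0 = \sigma(x) + \tau(x)b(x)$ with $\sigma(x), \tau(x) \geq 0$ and $b(x) > 0$ at $x \in Z$); however, the radical-theoretic step relies on the same archimedean machinery.
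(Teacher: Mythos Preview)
Your first two paragraphs --- reducing to $f(1+M\tau)=\sigma+\tau u\in T_S$ with $u:=b+Mf\in T_S$ via Theorem~\ref{Pow thm1} --- are correct and are exactly the opening move the paper intends (following \cite[Cor.~2.5]{Sal2}).

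The third paragraph contains a genuine error. The assertion $V_\BR(T_S\cap -T_S)=K_S$ is false: every element of the support ideal $T_S\cap -T_S$ vanishes on $K_S$, so $T_S\cap -T_S\subset I(K_S)$, and whenever $K_S$ has nonempty interior this forces $T_S\cap -T_S=\{0\}$ and hence $V_\BR(T_S\cap -T_S)=\BR^n$. Already $S=\{1-\sum_i x_i^2\}$ gives a counterexample. Consequently nothing prevents $f$ and $1+M\tau$ from sharing real zeros \emph{outside} $K_S$, so the ideal $(f,1+M\tau)+(T_S\cap -T_S)$ need not have empty real locus, and the route through Corollary~\ref{1st cor} does not close. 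Even granting emptiness, the real Nullstellensatz would produce an identity with a sum-of-squares term, not the comaximality that ``relatively prime modulo $T_S\cap -T_S$'' requires.

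Your alternative via Corollary~\ref{cor.1} is precisely the paper's route: it cites exactly $(\ref{Pow thm1},\ref{cor.1})$. You correctly rearrange to $f=(\sigma+\tau u)+f\cdot(-M\tau)$ and identify the needed step as $\tau\in\sqrt{(f)+(T_S\cap -T_S)}$, and you correctly observe that $\tau$ vanishes on $Z$ from the pointwise identity. But vanishing on a subset of $K_S$ is not membership in the ordinary radical of $(f)+(T_S\cap -T_S)$, which is a Zariski-global condition on all of affine space; your geometric observation does not by itself deliver it. The paper does not spell this step out either, deferring to \cite[Cor.~2.5]{Sal2}; to finish you must supply the argument from there rather than invoke the (incorrect) identification of $V_\BR(T_S\cap -T_S)$ with $K_S$.
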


\begin{proof}
Follow the proof of \cite[Cor 2.5]{Sal2} and use $(\ref{Pow thm1}, \ref{cor.1})$.
$\hfill \square$
\end{proof}

\begin{theorem}\label{cpt sat}
Let  $S=\{g_1, \ldots, g_s\}\subset F[x]$ be such that $K_S = \bigcup_{j=0}^{k} [a_j, b_j]$ is compact and $\partial K_S\subset F$ with $ b_{j-1} < a_j$. Then $Pos(K_S)=T_S$ if and only if the following two conditions hold:
\begin{enumerate}
    \item For each endpoint $a_j$, there exists $i \in \{1, \dots, s\}$ such that $g_i(a_j) = 0$ and $g_i'(a_j) > 0$,
    \item For each endpoint $b_j$, there exists $i \in \{1, \dots, s\}$ such that $g_i(b_j) = 0$ and $g_i'(b_j) < 0$.
\end{enumerate}
\end{theorem}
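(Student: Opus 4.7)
The plan is to prove both directions separately. The sufficiency reduces, via Theorem \ref{1 thm}, to showing each element of the natural generators $\mathcal{N}$ lies in $T_S$, and the key tool will be Corollary \ref{1st cor}. The necessity extracts the boundary conditions by differentiating $T_S$-representations of elements of $\mathcal{N}$ themselves.

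For necessity, assume $Pos(K_S) = T_S$. Since $K_S$ is compact, $\mathcal{N} = \{x - a_0,\ b_k - x,\ (x - b_{j-1})(x - a_j) : 1 \leq j \leq k\}$. For each left endpoint $a_j$, pick the $f \in \mathcal{N}$ with $f(a_j) = 0$ and $f'(a_j) > 0$ (namely $f = x - a_0$ when $j = 0$, and $f = (x - b_{j-1})(x - a_j)$ when $j \geq 1$, giving $f'(a_j) = a_j - b_{j-1} > 0$). Since $f \in Pos(K_S) = T_S$, write $f = \sum_{e \in \{0,1\}^s} \sigma_e g^e$ with $\sigma_e \in \sum F_{\geq 0} F[x]^2$. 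Evaluating at $a_j$, each summand is nonnegative and the total is $0$, so every summand vanishes. The crucial observation is that any weighted SOS polynomial vanishes to \emph{even} order at its zeros, so $\sigma_e(a_j) = 0$ forces $\sigma_e'(a_j) = 0$. Differentiating at $a_j$ then gives
\[
0 < f'(a_j) = \sum_{e : \sigma_e(a_j) > 0,\ g^e(a_j) = 0} \sigma_e(a_j)(g^e)'(a_j),
\]
and expanding $(g^e)'(a_j)$ by the product rule shows it is nonzero only when exactly one factor $g_{i_0}$ of $g^e$ vanishes at $a_j$, in which case $(g^e)'(a_j) = g_{i_0}'(a_j) \prod_{i \neq i_0,\, e_i = 1} g_i(a_j)$, and the remaining product is strictly positive since each $g_i(a_j) \geq 0$. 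Hence some $g_{i_0} \in S$ satisfies $g_{i_0}(a_j) = 0$ and $g_{i_0}'(a_j) > 0$, giving (1); the symmetric argument at $b_j$ using $b_k - x$ (for $j = k$) or $(x - b_j)(x - a_{j+1})$ (for $j < k$), whose derivative at $b_j$ is negative, gives (2).

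For sufficiency, assume (1) and (2). By Theorem \ref{1 thm}, $Pos(K_S) = T_{\mathcal{N}}$, so it suffices to show every element of $\mathcal{N}$ lies in $T_S$. For the extreme generator $x - a_0$: letting $g_i \in S$ be the witness, factor $g_i = (x - a_0) q$ with $q \in F[x]$ and $q(a_0) = g_i'(a_0) > 0$. Then $x - a_0 \geq 0$ on $K_S$ (as $a_0 = \min K_S$), and dividing $g_i \geq 0$ by $x - a_0 > 0$ on $K_S \setminus \{a_0\}$ and extending by continuity gives $q \geq 0$ on $K_S$. Since $q(a_0) \neq 0$, the polynomials $x - a_0$ and $q$ are coprime in $F[x]$, hence coprime modulo $T_S \cap -T_S$; as $g_i = (x - a_0) q \in T_S$, Corollary \ref{1st cor} gives $x - a_0 \in T_S$. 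The generator $b_k - x$ is handled symmetrically.

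The main technical obstacle is the gap generator $h_j = (x - b_{j-1})(x - a_j)$ with $j \geq 1$. Let $g_\alpha, g_\beta \in S$ be the witnesses at $a_j$ and $b_{j-1}$, and factor $g_\alpha = (x - a_j) u$ and $g_\beta = (x - b_{j-1}) v$, so $g_\alpha g_\beta = h_j \cdot uv$. A component-by-component sign analysis (using $u \leq 0$ on components of $K_S$ strictly left of $a_j$ and $u \geq 0$ on those from $a_j$ onward, with the analogous statement for $v$ at $b_{j-1}$) shows $uv \geq 0$ on $K_S$. In the generic case $g_\beta(a_j) \neq 0$ and $g_\alpha(b_{j-1}) \neq 0$, the polynomials $h_j$ and $uv$ share no root, so they are coprime in $F[x]$, and Corollary \ref{1st cor} applied to $g_\alpha g_\beta \in T_S$ immediately yields $h_j \in T_S$. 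When one or both of those values vanish, $h_j$ divides $g_\alpha g_\beta$ with extra multiplicity, and I plan to iteratively peel off excess factors of $x - a_j$ or $x - b_{j-1}$, applying Corollary \ref{1st cor} at each stage until a coprime factorization is reached. This iterative coprimality reduction is the hardest part and parallels the argument of Kuhlmann, Marshall and Schwartz \cite[Thm 2.6]{Sal2} in the case $F = \BR$.
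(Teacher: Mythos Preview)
Your approach is essentially the same as the paper's, which simply instructs to follow \cite[Thm 3.2]{Sal2} while invoking Theorem~\ref{1 thm} and Corollary~\ref{1st cor} in place of their $\BR$-analogues. Your necessity argument via differentiation and even-order vanishing of weighted SOS is correct, and your sufficiency sketch (reduce to $\mathcal{N}\subset T_S$, then factor witness products and apply Corollary~\ref{1st cor}) is exactly the intended strategy, with the degenerate-case ``peeling'' you flag being precisely what \cite{Sal2} does.
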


\begin{proof}
Follow the proof of \cite[Thm 3.2]{Sal2} and use $(\ref{1 thm}, \ref{1st cor})$. $\hfill\square$
\end{proof}

\begin{corollary}\label{T.4.4}
Let $S = \{g_1, \dots, g_s\}\subset F[x] $ be such that $K_S$ is compact without any isolated point. Then $Pos(K_S)= T_S$  if and only if for each endpoint $a \in K_S$, there exists $i$ such that $g_i(a)=0$ and $g_i'(a)\neq 0$, i.e. $(x-a)|g_i$ and $(x-a)^2\not| g_i$.
\end{corollary}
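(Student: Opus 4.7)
The plan is to deduce this directly from Theorem \ref{cpt sat} by observing that, in the absence of isolated points, the sign condition on $g_i'$ at an endpoint is automatic whenever $g_i(a) = 0$ and $g_i'(a) \neq 0$. So the whole task is to show that the condition stated in this corollary is equivalent to conditions (1) and (2) in Theorem \ref{cpt sat}.

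First I would write $K_S = \bigcup_{j=0}^{k}[a_j,b_j]$ with $b_{j-1} < a_j$; since $K_S$ has no isolated points, every endpoint of $K_S$ is either some $a_j$ (left endpoint of a component, with $a_j < b_j$) or some $b_j$ (right endpoint, with $a_j < b_j$). The forward direction is then immediate: if $Pos(K_S) = T_S$, then conditions (1) and (2) of Theorem \ref{cpt sat} hold, and in particular at each endpoint there is some $g_i$ with $g_i(a) = 0$ and $g_i'(a) \neq 0$.

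For the converse, suppose at each endpoint $a$ of $K_S$ there exists $i$ with $g_i(a) = 0$ and $g_i'(a) \neq 0$. I need to upgrade this to the signed conditions of Theorem \ref{cpt sat}. Consider a left endpoint $a = a_j$ and a corresponding $g_i$. Because $K_S$ has no isolated points, the connected component containing $a_j$ is a nondegenerate interval $[a_j,b_j]$, so $[a_j, a_j+\delta] \subset K_S$ for some $\delta > 0$. Since $g_i \geq 0$ on $K_S$ and $g_i(a_j) = 0$, we get
\[
g_i'(a_j) \;=\; \lim_{x\to a_j^+}\frac{g_i(x)-g_i(a_j)}{x-a_j} \;\geq\; 0,
\]
and combined with $g_i'(a_j) \neq 0$ this forces $g_i'(a_j) > 0$, yielding condition (1) of Theorem \ref{cpt sat}. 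The argument for right endpoints $b_j$ is symmetric, using $[b_j-\delta,b_j] \subset K_S$ and approaching from the left to get $g_i'(b_j) \leq 0$, hence $< 0$. Applying Theorem \ref{cpt sat} now gives $Pos(K_S) = T_S$.

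I do not anticipate a genuine obstacle here; the only thing to be careful about is invoking the "no isolated points" hypothesis correctly, since without it a singleton component $\{a_j\} = \{b_j\}$ would break the one-sided derivative argument. The parenthetical reformulation in terms of divisibility (\,$(x-a)\mid g_i$ but $(x-a)^2 \nmid g_i$\,) is just the factorization translation of $g_i(a) = 0, \; g_i'(a) \neq 0$ in $F[x]$ and requires no further argument.
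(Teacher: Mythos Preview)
Your proposal is correct and follows essentially the same route as the paper: both directions are reduced to Theorem~\ref{cpt sat}, with the only nontrivial point being that at a left endpoint $a_j$ of a nondegenerate component one has $g_i\geq 0$ on $[a_j,a_j+\delta]$, which together with $g_i(a_j)=0$ and $g_i'(a_j)\neq 0$ forces $g_i'(a_j)>0$ (and symmetrically at right endpoints). Your one-sided difference-quotient argument is a slightly more explicit version of the paper's observation that $g_i>0$ on $(a,a+\epsilon)$ forces $g_i'(a)>0$; note also that the paper appears to have the labels $(\Leftarrow)$ and $(\Rightarrow)$ interchanged, whereas your labeling is the correct one.
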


\begin{proof}
$(\Leftarrow)$ follows from (\ref{cpt sat}). Let us prove $(\Rightarrow)$.
If $[a,b]\subset K$ with $a<b$ and $a,b\in \partial K$, then there exist $i$ such that $g_i(a)=0$ and $g_i'(a)\neq 0$. Since $g_i>0$ on $(a,a+\epsilon)$, we get $g_i'(a)>0$. Similarly,   there exist $j$ such that $g_j(b)=0$ and $g_j'(b)\neq 0$. Since $g_j>0$ on $(b-\epsilon,b)$, we get $g_j'(b)<0$. Now use $(\ref{cpt sat})$.
$\hfill \square$ 
\end{proof}

\begin{remark}
    We need no isolated point condition in $(\ref{T.4.4})$: Take $S=\{x,-x^2\}$, so $K_S=\{0\}$. Then $-x\in Pos(K_S)$.
Assume $-x\in T_S$. Then
    $-x= \sigma_0+ \sigma_1x+\sigma_2(-x^2)+\sigma_3(-x^3)$ with  $ \sigma_i \in \sum F_{\geq 0}F[x]^2$ gives $\sigma_0(0)=0$.  So $\sigma_0= x^2 \sigma_0 '$. Now
   $-1= x\sigma_0 ' + \sigma_1 + \sigma_2(-x)+ \sigma_3(-x^2)$
gives $-1=\sigma_1(0)$, a contradiction. Hence 
   $-x\notin T_S$. Here $g=x\in S$ such that $g'(0)\neq 0$, so condition of (\ref{T.4.4}) holds. But there is no $g\in S$ such that $g(0)=0$ and $g'(0)<0$, so condition of (\ref{cpt sat}) does not hold.
$\hfill \square$
\end{remark}

\begin{corollary}\label{cor.2}
Let $S = \{g_1, \dots, g_s\}\subset F[x] $ be such that $K_S$ is compact
 without any isolated point. Let $\mathcal{N}$ be the natural choice of generators of $K_S$ and $\pi$ be the product of the elements of $\mathcal{N}$. Then
$M_{\{\pi\}} = M_{\mathcal{N}} = T_{\mathcal{N}} = T_{\{\pi\}}.$
\end{corollary}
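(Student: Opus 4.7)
The plan is to reduce all four equalities to two claims: $T_{\{\pi\}}=T_{\mathcal N}$ and $M_{\{\pi\}}=M_{\mathcal N}$. The first, together with $M_{\{\pi\}}=T_{\{\pi\}}$ (automatic for a single generator) and $M_{\mathcal N}\subseteq T_{\mathcal N}$ (trivial), immediately gives $M_{\mathcal N}=T_{\mathcal N}$. Write $K_S=\bigcup_{j=0}^{k}[a_j,b_j]$ with $b_{j-1}<a_j$. Because $K_S$ has no isolated points, $\mathcal N=\{g_1,\ldots,g_s\}$ consists of pairwise coprime polynomials, with $g_1=x-a_0$ and $g_s=b_k-x$ linear and the remaining $g_i$ the quadratic gap generators $(x-b_{j-1})(x-a_j)$. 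A direct sign analysis shows $K_{\{\pi\}}=K_S$ and that $\pi$ has only simple zeros on $\partial K_S$, with $\pi'(a_j)=g_i'(a_j)\cdot(\pi/g_i)(a_j)>0$ at each left endpoint $a_j$ and $\pi'(b_j)<0$ at each right endpoint $b_j$. Corollary~\ref{T.4.4} applied to both $\mathcal N$ and $\{\pi\}$ then gives $T_{\mathcal N}=Pos(K_S)=T_{\{\pi\}}$, whence $M_{\{\pi\}}=Pos(K_S)$; since each $g_i\in Pos(K_S)=M_{\{\pi\}}$, we also obtain $M_{\mathcal N}\subseteq M_{\{\pi\}}$.

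The remaining inclusion $M_{\{\pi\}}\subseteq M_{\mathcal N}$ reduces, via closure of $M_{\mathcal N}$ under multiplication by sums of squares, to showing $\pi\in M_{\mathcal N}$. I will prove by induction on $|S'|$ the stronger statement that $\prod_{g\in S'}g\in M_{\mathcal N}$ for every subset $S'\subseteq\mathcal N$ containing $\{g_1,g_s\}$. For the base case $S'=\{g_1,g_s\}$, Corollary~\ref{prime}(2) applies to $g_1,g_s$: they are coprime, $K_{\{g_1g_s\}}=[a_0,b_k]$ is compact, and both are $\geq 0$ on $[a_0,b_k]$, so $g_1g_s\in M_{\{g_1,g_s\}}\subseteq M_{\mathcal N}$. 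For the inductive step, choose any $g_i\in S'\setminus\{g_1,g_s\}$, set $S''=S'\setminus\{g_i\}$, and apply Corollary~\ref{prime}(2) with $f=g_i$ and $g=\prod_{h\in S''}h$. A sign count of the linear and quadratic factors shows $K_{\{\prod_{g\in S'}g\}}$ is a compact subset of $[a_0,b_k]$ on which both $g_i$ and $\prod_{h\in S''}h$ are $\geq 0$, and they are coprime; hence $M_{\{\prod_{g\in S'}g\}}=M_{\{g_i,\prod_{h\in S''}h\}}$, and the inductive hypothesis together with $g_i\in M_{\mathcal N}$ yields $\prod_{g\in S'}g\in M_{\mathcal N}$. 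Taking $S'=\mathcal N$ gives $\pi\in M_{\mathcal N}$.

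The main obstacle is verifying the hypotheses of Corollary~\ref{prime}(2) at each inductive step, and especially the compactness of $K_{\{\prod_{g\in S'}g\}}$. It is essential to peel off a middle quadratic generator rather than $g_1$ or $g_s$: if one tried to remove $g_1$, the zero set of the remaining product $\pi/g_1$ would be of the form $(-\infty,b_{j-1}]\cup\cdots$, extending to infinity and violating compactness. Keeping $g_1$ and $g_s$ in the running product throughout the induction confines the zero set to $[a_0,b_k]$, and then the sign analysis makes the hypotheses of Corollary~\ref{prime}(2) routine to check.
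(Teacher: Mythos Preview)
Your proof is correct and follows essentially the same approach as the paper, which directs the reader to the proof of \cite[Cor 3.4]{Sal2} using Corollaries~\ref{T.4.4} and~\ref{prime}. You apply Corollary~\ref{T.4.4} to both $\mathcal N$ and $\{\pi\}$ to obtain the saturation $T_{\mathcal N}=Pos(K_S)=T_{\{\pi\}}=M_{\{\pi\}}$, and then use Corollary~\ref{prime}(2) inductively to show $\pi\in M_{\mathcal N}$; your observation that one must peel off a middle quadratic while retaining $g_1$ and $g_s$ to preserve compactness of $K_{\{\prod_{g\in S'}g\}}$ is exactly the care required to make the induction go through.
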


\begin{proof}
Follow the proof of  \cite[Cor 3.4]{Sal2} and use $(\ref{T.4.4}, \ref{prime})$. 
$\hfill \square$
\end{proof}


\begin{theorem}\label{Archi1}
    Let $S\subseteq F[x]$ be a finite set such that $K_S$ is compact. Then $M_S$ is archimedean.
\end{theorem}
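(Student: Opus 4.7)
The goal is to show $C - x^2 \in M_S$ for some $C \in F_{>0}$, which is the one-variable archimedean condition. The plan is to first construct an auxiliary polynomial $h \in M_S$ of even positive degree with negative leading coefficient, and then to apply Lemma \ref{sos} to express $C - x^2 - h$ as a sum of weighted squares in $F[x]$, so that
\[
C - x^2 = (C - x^2 - h) + h \in \sum F_{\geq 0} F[x]^2 + M_S \subseteq M_S.
\]

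Construction of $h$ proceeds by cases on the asymptotic behavior of the generators. Since $K_S \subseteq [-R,R]$ for some $R$, the sets
\[
I^+ = \{i : g_i(x) \to -\infty \text{ as } x \to +\infty\}, \quad I^- = \{i : g_i(x) \to -\infty \text{ as } x \to -\infty\}
\]
are both non-empty. If $I^+ \cap I^- \neq \emptyset$, any $g_{i_0}$ in the intersection has even degree with negative leading coefficient, and we take $h := g_{i_0} \in M_S$ directly. Otherwise, $I^+$ consists of odd-degree generators with negative leading coefficient, and $I^-$ of odd-degree generators with positive leading coefficient; pick $g_{i^+} \in I^+$ and $g_{i^-} \in I^-$ with leading coefficients $l_{i^+} < 0$ and $l_{i^-} > 0$, and set
\[
h := |l_{i^-}| \, (1+x)^{2k} \, g_{i^+}(x) + |l_{i^+}| \, (1-x)^{2\ell} \, g_{i^-}(x) \in M_S,
\]
with exponents $k, \ell \geq 1$ chosen so that $\deg((1+x)^{2k} g_{i^+}) = \deg((1-x)^{2\ell} g_{i^-})$. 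The scaling factors $|l_{i^\mp}|$ make the top (odd-degree) coefficients cancel. A direct binomial expansion shows the next-to-top (even-degree) coefficient is strictly negative for $k, \ell$ large enough; this is illustrated by the examples $S = \{1-x, 1+x\}$ giving $h = 2 - 2x^2$, and $S = \{1-x^3, 1+x^3\}$ giving $h = 2 + 2x^2 - 4x^4$.

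Given such an $h$, rescale by a positive rational (still in $M_S$) so that the leading coefficient of $-h$ strictly exceeds $1$ in magnitude when $\deg h = 2$. Then $C - x^2 - h$ has positive leading coefficient of even degree and is bounded below on $\BR$. For $C$ sufficiently large, $C - x^2 - h \geq 0$ on all of $\BR$, so by Lemma \ref{sos} it lies in $\sum F_{\geq 0} F[x]^2 \subseteq M_S$, and the identity above concludes that $C - x^2 \in M_S$.

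The main obstacle is verifying rigorously, in the odd-degree case, that the symmetric binomial combination yields an $h$ whose leading coefficient after the top-degree cancellation is negative and of even degree. A careful analysis of the expansion of $(1 \pm x)^{2k}$ paired with the full expansion of $g_{i^\pm}$ (tracking the alternating symmetry in the cross-terms) should establish this uniformly. Should this case analysis prove awkward for generators of mixed odd degrees, an alternative route is to invoke Theorem \ref{Pow thm1} to obtain $C - x^2 \in T_S$ and then lift the preorder representation into $M_S$ using identities of the form $(g_i + g_j)(g_i g_j) = g_i^2 g_j + g_i g_j^2$, which put $g_i g_j$ into $M_S$ whenever a suitable positive-constant combination of generators is available.
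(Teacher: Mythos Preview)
Your proof is correct and shares its key first step with the paper: construct an element $h\in M_S$ of even degree with negative leading coefficient. The paper simply cites \cite[Thm~7.1.2]{Mur1} for this, while you spell out the construction explicitly. Your binomial combination in the odd-degree case does work: a direct computation shows the degree-$(D{-}1)$ coefficient of your $h$ equals $2(k+\ell)\,l_{i^-}l_{i^+}$ plus terms independent of $k,\ell$, and since $l_{i^-}l_{i^+}<0$ this is negative once $k,\ell$ are large, so the obstacle you flag is not serious.

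Where you differ from the paper is the second step. You argue elementarily: $C-x^2-h$ has even degree with positive leading coefficient, hence is $\geq 0$ on $\BR$ for large $C\in F$, so lies in $\sum F_{\geq 0}F[x]^2$ by Lemma~\ref{sos}, and adding $h$ gives $C-x^2\in M_S$. The paper instead observes that $K_{\{h\}}$ is compact and invokes Theorem~\ref{Pow thm1} to conclude that $T_{\{h\}}$ is archimedean; since $T_{\{h\}}\subseteq M_S$ (as $h\in M_S$), $M_S$ is archimedean. Your route is more self-contained in the univariate setting, avoiding the real-spectrum machinery behind Theorem~\ref{Pow thm1}; the paper's route is shorter once that theorem is available.

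One correction to your closing remark: the ``alternative route'' you propose---obtain $C-x^2\in T_S$ via Theorem~\ref{Pow thm1} and then lift products $g_ig_j$ into $M_S$ via $(g_i+g_j)(g_ig_j)=g_i^2g_j+g_ig_j^2$---does not work as stated, since that identity only gives $(g_i+g_j)(g_ig_j)\in M_S$, not $g_ig_j\in M_S$. The paper's actual alternative is cleaner: apply Theorem~\ref{Pow thm1} to the \emph{single} generator $h$, so that $T_{\{h\}}\subseteq M_S$ automatically with no lifting required.
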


\begin{proof}
Follow the proof of \cite[Thm 7.1.2]{Mur1}, where it is proved in case $F=\mathbb{R}$. 
    First one proves that there exists $f\in M_S$ such that $deg(f)$ is even and leading coefficients of $f$ is negative. Since $K_{\{f\}}$ is compact, by $(\ref{Pow thm1})$, we have  $T_{\{f\}}$ is archimedean. Since $T_{\{f\}}\subset M_S$, so $M_S$ is archimedean.
    $\hfill \square$
\end{proof}

\begin{theorem}\label{M=T}
    Let $K$ be a compact semi-algebraic set in $\mathbb{R}$ with $\partial K \subset F$. Let $\mathcal{N} $
   be  the  natural choice of generators of $K$. Then $M_{\mathcal{N}} = T_{\mathcal N}=Pos(K)$.
\end{theorem}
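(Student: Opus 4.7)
Since $T_{\mathcal N} = Pos(K)$ by Theorem \ref{1 thm} and $M_{\mathcal N} \subseteq T_{\mathcal N}$ is immediate, the content is $Pos(K) \subseteq M_{\mathcal N}$. The plan is to combine three tools already developed in the paper: (a) $M_{\mathcal N}$ is archimedean (Theorem \ref{Archi1}), (b) the descended strict Putinar of Theorem \ref{Pow thm2}, and (c) Corollary \ref{cor.2}, which dispatches the case where $K$ has no isolated point. I would first observe that since $N - x^2 \in M_{\mathcal N}$ by (a), writing $N - x^2 = \tau_0 + \sum \tau_i g_i$ gives $\sigma(N-x^2) = \sigma\tau_0 + \sum \sigma\tau_i g_i \in M_{\mathcal N}$ for every sum of squares $\sigma$, hence $M_{\mathcal N \cup \{N-x^2\}} = M_{\mathcal N}$, and Theorem \ref{Pow thm2} yields: $f > 0$ on $K$ implies $f \in M_{\mathcal N}$.

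For $f \in Pos(K)$ with zeros on $K$, I would induct on $\deg f$. Interior zeros $c \in K^\circ$ have even multiplicity $2m$ by positivity, and the argument of Claim 1 in \ref{2.2} factors $f = (p^m)^2 h$ with $h \in Pos(K)$ of strictly smaller degree, where $p \in F[x]$ is the $F$-minimal polynomial of $c$; the inductive hypothesis gives $h \in M_{\mathcal N}$, hence $f \in M_{\mathcal N}$. Boundary zeros $c \in \partial K \subset F$ are factored out using an appropriate natural generator following the case analysis of \S\ref{thm.3}; the inductive hypothesis then delivers the quotient in $M_{\mathcal N}$. The one structural point that remains is that each pair product $g_i g_j$ of distinct generators of $\mathcal N$ appearing in the factorization must lie in $M_{\mathcal N}$, since $M_{\mathcal N}$ (unlike $T_{\mathcal N}$) is not visibly closed under products of generators.

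This pair-product claim splits into three cases. When $g_i$, $g_j$ are linear endpoints of a connected component $[a,b] \subseteq K$, the classical identity $(x-a)(b-x) = (b-a)^{-1}\bigl((x-b)^2(x-a) + (x-a)^2(b-x)\bigr)$ applies. When $g_i$ and $g_j$ are coprime, Corollary \ref{prime} (which underlies Corollary \ref{cor.2}) gives $g_i g_j \in M_{\{g_i, g_j\}} \subseteq M_{\mathcal N}$. The remaining delicate case — and the main obstacle — is when $g_i = (x-a)(x-c)$ and $g_j = (x-c)(x-b)$ share the common factor $(x-c)$ at an isolated point $c \in (a,b) \cap K$, where the coprimality-based argument fails. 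I would handle it by the explicit identity
\[
g_i g_j \;=\; \tfrac{c-a}{b-a}\,(x-b)^2\,g_i \;+\; \tfrac{b-c}{b-a}\,(x-a)^2\,g_j,
\]
whose verification reduces to the polynomial identity $\tfrac{c-a}{b-a}(x-b) + \tfrac{b-c}{b-a}(x-a) = x-c$ and whose coefficients $\tfrac{c-a}{b-a}, \tfrac{b-c}{b-a}$ lie in $F_{>0}$ since $a<c<b$, so the right-hand side exhibits $g_i g_j$ as an element of $M_{\mathcal N}$. This identity is the clean novel ingredient that makes the isolated-point case work within the same framework as the non-isolated one.
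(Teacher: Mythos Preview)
Your reduction to pair products is the right structural move, but notice that once every $g_i g_j \in M_{\mathcal N}$ you get $g_i M_{\mathcal N} \subseteq M_{\mathcal N}$ for each $i$, and then $g^e \in M_{\mathcal N}$ for every $e \in \{0,1\}^s$ by iterating; hence $T_{\mathcal N} \subseteq M_{\mathcal N}$ directly. The induction on $\deg f$ through \S\ref{thm.3} is therefore redundant: the pair-product claim is the entire content of $M_{\mathcal N} = T_{\mathcal N}$, not a residual point.

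The genuine gap is your coprime case. Corollary~\ref{prime} requires $K_{\{g_i g_j\}}$ to be compact, and apart from the two linear generators this never holds: for $g_i = x-a_0$ and $g_j = (x-c)(x-d)$ with $a_0 < c < d$ the product is a cubic with positive leading coefficient, so $K_{\{g_i g_j\}}$ contains a ray; two coprime quadratic gap generators give a quartic with positive leading coefficient. Worse, the conclusion itself is false here: a short degree-and-vanishing count shows $(x-a_0)(x-c)(x-d) \notin M_{\{x-a_0,\,(x-c)(x-d)\}}$, so extra generators are genuinely needed. The paper supplies them by choosing $\beta \in F$ with $\beta - x > 0$ on $K$, so $\beta - x \in M_{\mathcal N}$ by the strict Putinar step you already invoked, and then applying Corollary~\ref{cor.2} to the auxiliary compact set $[a_0, c] \cup [d, \beta]$, which has no isolated points and natural generators $\{x - a_0,\, (x-c)(x-d),\, \beta - x\}$; this yields $g_i g_j \in M_{\{x-a_0,\, (x-c)(x-d),\, \beta-x\}} \subseteq M_{\mathcal N}$. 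Two coprime quadratics need both $x - \alpha$ and $\beta - x$ adjoined. You also omitted the linear--quadratic pair sharing a root (isolated extremum of $K$), which the paper handles via Theorem~\ref{Non cpt sat}(2). Your explicit identity for two quadratics sharing an isolated point, on the other hand, is correct and is a pleasant alternative to the paper's auxiliary-set treatment of that case.
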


\begin{proof} 
Follow the proof of \cite[Thm 3.5]{Sal2}.
By $(\ref{1 thm})$, $Pos(K)=T_{\mathcal{N}}$. To show $T_{\mathcal{N}}= M_{\mathcal{N}}$, we need to show that for $g_i,g_j\in \mathcal N$, $g_ig_j\in M_{\mathcal N}$.

Case 1: Use (\ref{cor.2}).

Case 2: When $a=c$, use (\ref{Non cpt sat}(2)).  For $a<c$, choose $\beta \in F$  such that $\beta - x > 0$ on $K$. By  (\ref{Archi1}), $M_{\mathcal{N}}$ is archimedean. Hence, by $(\ref{Pow thm2})$, we have $\beta-x\in M_{\mathcal{N}}$. Apply (\ref{cor.2}) to $[a,c]\cup [d,\beta]$ to get $g_ig_j\in M_{\mathcal N}$.

Case 3: When $c'=d$, use (\ref{Non cpt sat}(2)).  For $c'>d$, choose $\alpha,\beta \in F$  such that $x-\alpha,\beta - x > 0$ on $K$. By  (\ref{Archi1}), $M_{\mathcal{N}}$ is archimedean. Hence, by $(\ref{Pow thm2})$, we have $x-\alpha,\beta-x\in M_{\mathcal{N}}$. Apply (\ref{cor.2}) to $[\alpha,c]\cup [d,c']\cup[d',\beta]$ to get $g_ig_j\in M_{\mathcal N}$.
    $\hfill\square$
\end{proof}

\medskip

Let $K$ be a basic closed semi-algebraic set in $\mathbb{R}$ with $\partial K\subset F$ and  $\mathcal{N}=\{g_1, \ldots, g_s\}\subset F[x]$  be the natural choice  of generators of $K$. By (\ref{1 thm}), $Pos(K)=T_{\mathcal N}$. We can ask the following

\begin{question}\label{q.7.3}
\begin{enumerate}
    \item Is $Pos(K)=M_{\mathcal{N}}$?
    \item Let $f\in M_{\mathcal{N}} $. Does there exists  a representation $f = \sigma_0 + \sigma_1 g_1 + \cdots + \sigma_s g_s$ with $\sigma_i \in \sum F_{\geq 0} F[x]^2$ such that $deg(\sigma_i g_i)$ is bounded as a function of $deg(f)$? 
\end{enumerate}
\end{question}

{\bf Answer:} 
\begin{enumerate}
    \item If $s \leq 1$, then $Pos(K)=T_{\mathcal N}=M_{\mathcal{N}}$, by $(\ref{1 thm}(1))$ and $deg(\sigma_ig_i)\leq deg(f)$, by  $(\ref{1 thm} (2))$. 
    
    \item If $s=2$, $K$ is non-compact and has an isolated point, then $Pos(K)=T_{\mathcal N}=M_{\mathcal{N}}$, by $(\ref{Non cpt sat}(2))$. Further, following the proof of \cite[Thm 2.5]{Sal1}, we get  $deg(\sigma_ig_i)\leq deg(f)$.
    
\item If $K$ is non-compact and either (i) $s=2$ and $K$ has no isolated point or (ii) $s\geq 3$, then $M_{\mathcal N}\neq T_{\mathcal N}$, by (\ref{Non cpt sat}(2)), thus answering $(\ref{q.7.3}(1))$ in negative. But we do not know $(\ref{q.7.3}(2))$ in this case.

    \item If $s=2$ and $K$ is compact, then following the proof of \cite[Thm 3.5]{Sal2}, we get $deg(\sigma_ig_i)\leq deg(f)+1$.

\item If $s \geq 3$ and $K$ is compact, then there is no such degree bound. 
For an example, we may assume that $a$ is the smallest element of $K$ and $d$ is the largest element of $K$. Let $g_1 = (x - a)$, $g_2 = (x - b)(x - c)$, where $a < b < c<d$ be elements of $F$. It is shown in \cite[Example 4.4]{Sal2} that for $\beta > d$, if we consider the
the representation
$$g_1 g_2 = \sigma_0 + \sigma_1 g_1 + \sigma_2 g_2 + \sigma_3 (\beta - x)$$
where $\sigma_i \in \sum \mathbb{R}[x]^2$, then as $\beta\to \infty$, $max\{deg(\sigma_i):i=0,\ldots,3\}\to \infty$. Thus there is no degree bound on $deg(\sigma_i g_i)$ as a function of $deg(f)$. Note that $Pos(K)=T_{\mathcal N}=M_{\mathcal{N}}$, by $(\ref{1 thm} (1),\ref{M=T})$.
$\hfill \square$
\end{enumerate}


\section{Application of Theorem \ref{1 thm} - III}

In this section, we study non-negative polynomials on some closed non-semi-algebraic subsets of $\BR$.
Consider the cantor set 
$$C=\cap_{n\geq 0} C_n$$ where $C_{0}=[0,1]$, $C_1=[0,\frac 13]\cup [\frac 23,1]$, $C_2=[1,\frac 19]\cup [\frac 29, \frac 39]\cup [\frac 69, \frac 79]\cup[\frac 89, 1]$, etc.

Using (\ref{1 thm}), we get

\begin{theorem}\label{t.2}
   $Pos(C)= \cup_{n\geq 0} Pos(C_n)=\cup_{n\geq 0}T_{\mathcal N_n}$, where $\mathcal N_n$ is the natural choice of generators of basic closed semi-algebraic set $C_n$. In particular, $Pos(C)$ is not a finitely generated preordering.
\end{theorem}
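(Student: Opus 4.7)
The plan is to show $Pos(C) = \cup_{n\geq 0} Pos(C_n)$ directly, so that the second equality $\cup_n Pos(C_n) = \cup_n T_{\CN_n}$ is an immediate consequence of Theorem \ref{1 thm} applied to each compact $C_n$ (which is a finite union of closed intervals with rational endpoints, hence $\partial C_n \subset \BQ \subseteq F$). The inclusion $\cup_n Pos(C_n) \subseteq Pos(C)$ is automatic from $C \subseteq C_n$.

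For the nontrivial inclusion $Pos(C) \subseteq \cup_n Pos(C_n)$, I would start with $f \in Pos(C)$ and analyze the open set $V = \{x \in [0,1] : f(x) < 0\}$. Since $f$ is a polynomial, $V$ has only finitely many connected components (at most $\deg f + 1$). Each component is disjoint from $C$, hence lies in the complement $[0,1] \setminus C$, which is the disjoint union of all open ``middle-third'' gaps $(a_k, b_k)$ removed during the Cantor construction. By connectedness, each component of $V$ is entirely contained in one such gap. Since each gap $(a_k, b_k)$ disappears from $C_n$ at some finite stage $n_k$, and only finitely many gaps are involved, I take $N$ to be the maximum of these finitely many $n_k$; then $V \cap C_N = \emptyset$, so $f \geq 0$ on $C_N$ and $f \in Pos(C_N)$.

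For the final assertion that $Pos(C)$ is not a finitely generated preordering, suppose $Pos(C) = T_S$ for some finite $S \subset F[x]$. Each $g \in S$ is in $Pos(C)$, so by the preceding step lies in some $Pos(C_{n_g})$. Taking $N = \max_{g \in S} n_g$, we have $S \subset Pos(C_N)$, i.e. $C_N \subseteq K_S$, and so $T_S \subseteq Pos(K_S) \subseteq Pos(C_N)$, which forces $Pos(C) \subseteq Pos(C_N)$. But if $(a,b) \subset C_N$ is a gap removed at stage $N+1$ (so $a,b \in \BQ \subseteq F$), then $(x-a)(x-b) \geq 0$ on $[0,1] \setminus (a,b) \supseteq C$ while $(x-a)(x-b) < 0$ on $(a,b) \subseteq C_N$, contradicting $Pos(C) \subseteq Pos(C_N)$.

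The main obstacle is the analysis in the second paragraph: one must ensure that the sign-negative region of $f$ is captured by finitely many Cantor gaps, which uses both the finite-root structure of polynomials and the fact that every connected component of $[0,1] \setminus C$ is removed at some specific finite stage of the iterative construction. Once this is in place, the rest is a clean combination of Theorem \ref{1 thm} with an explicit witness polynomial built from the endpoints of a stage-$(N+1)$ gap.
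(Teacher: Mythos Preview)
Your argument is correct and complete. The key inclusion $Pos(C)\subseteq\bigcup_n Pos(C_n)$ is established by a genuinely different (and arguably cleaner) mechanism than the paper's. You exploit directly that the connected components of $[0,1]\setminus C$ are exactly the countably many removed open middle-third intervals, so each of the finitely many components of $V=\{x\in[0,1]:f(x)<0\}$ sits inside a single such gap; picking $N$ large enough that all the finitely many relevant gaps have already been excised from $C_N$ finishes it. The paper instead argues metrically: it sets $\alpha=\min\{c'-c:f(c)=f(c')=0,\ f<0\text{ on }(c,c')\}$, chooses $n$ with $3^{-n}<\alpha$, and uses that $\partial C_n\subset C$ together with the intermediate value theorem to derive a contradiction if $f$ were negative somewhere on $C_n$. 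Your approach trades the length estimate for a purely topological observation about the gap structure; both are short, but yours makes the role of the Cantor construction more transparent and avoids having to check that $\alpha$ is well-defined (which implicitly uses $0,1\in C$).

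For the final assertion that $Pos(C)$ is not finitely generated, the paper states this without proof, whereas you supply one. Your argument is correct: from $S\subset Pos(C_N)$ you get $C_N\subset K_S$, hence $T_S\subset Pos(C_N)$, and the quadratic $(x-a)(x-b)$ built from a stage-$(N{+}1)$ gap gives the required element of $Pos(C)\setminus Pos(C_N)$.
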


\begin{proof} 
We only need to show that if $f\in Pos(C)$,  then $f\in Pos(C_n)$ for some $n\in \mathbb{N}$. Suppose $f(x)<0$ for some $x\in [0,1]$. Let $$\alpha=\min\{c'-c: f(c)=f(c')=0~\text{and} ~f<0~\text{on}~(c,c') \}$$ 
Each connected component of $C_n$ has length $1/3^n$. Choose $n\in \mathbb{N}$  such that $1/3^n < \alpha$. Note  $\partial C_n \subset C$, hence $f(\partial C_n)\geq 0$. If $J_n$ is a connected component of $C_n$ and $f(x)<0$ for some $x\in J_n\setminus \partial C_n$, then there exists $c,c'\in J_n$ such that $f(c)=f(c')=0$. Here $|c'-c|\leq 1/3^n <\alpha$, which is a contradiction. So $f\in Pos(C_n)$. 
 $\hfill \square$
\end{proof}

\begin{remark} 
 If $POS(C)$ denotes the set of all continuous functions on $[0,1]$ which are non-negative on $C$, then 
 $$POS(C)\neq \cup_{n\geq 0} POS (C_n).$$ Thus  $(\ref{t.2})$ is not true for continuous functions. To see this, we can define a continuous piecewise linear function $f:[0,1]\to [-1,1]$ such that $|f(x)|\leq x$ and consisting of infinitely many line segments $l_n$ defined as follows: $l_1$ joins  $(1,1)$ to $(\frac 23,0)$ until it hits $y=-x$ line at $P_1$, $l_2$ joins $P_1$ and $(\frac 13,0)$ until it hits $y=x$ line at $P_2$, $l_3$ joins $P_2$ and $(\frac 2 {3^2},0)$ until it hits $y=-x$ line at $P_3$, $l_4$ joins $P_3$ and $(\frac 1 {3^2},0)$ until it hits $y=x$ line at $P_4$, $l_5$ joins $P_4$ and $(\frac 2 {3^3},0)$ until it hits $y=x$ line at $P_5$, etc. Clearly, $f\in POS(C)\setminus\cup_{n\geq 0} POS (C_n)$.   
 $\hfill \square$
\end{remark}

 \begin{theorem}\label{t.1}
 Let $K\subset \BR$ be a closed set of the following type, where $\alpha_i,\beta_i \in F$ with $\alpha_i \leq \beta_i < \alpha_{i+1}$. 
 \begin{enumerate}

\item\label{e.1} $K=\cup_{i\geq 0}[\alpha_i,\beta_i]$ with $\lim_{i\to \infty} \alpha_i = \lim_{i\to \infty}\beta_i=\infty$. Further,
$[\alpha_0,\beta_0]=(-\infty,\beta_0]$ is allowed. 
Then 
$$Pos(K)=\cup_{n\geq 0}Pos(K_n) =\cup_{n\geq 0} T_{\mathcal N_n},$$ where $K_n=\cup_{i=0}^n [\alpha_i,\beta_i] \cup [\alpha_{n+1},\infty)$ is a basic closed semi-algebraic set and $\CN_n$ is the natural choice of generators of $K_n$.

 \item 
$K=\cup_{i\leq 0}[\alpha_i,\beta_i]$ with  $\lim_{i\to -\infty} \alpha_i = \lim_{i\to -\infty}\beta_i=-\infty$. Further,
$[\alpha_0,\beta_0]=[\alpha_0,\infty)$ is allowed.
Then $$Pos(K)=\cup_{n\geq 0}Pos(K_n) =\cup_{n\geq 0}T_{\mathcal N_n},$$ where $K_n=\cup_{i=-n}^0 [\alpha_i,\beta_i] \cup (-\infty,\beta_{-(n+1)}]$ is a basic closed semi-algebraic set and $\CN_n$ is the natural choice of generators of $K_n$.

\item $K=\cup_{i\in \mathbb{Z}} [\alpha_i,\beta_i]$ with $\lim_{i\to \pm \infty} \alpha_i = \lim_{i\to \pm \infty}\beta_i= \pm \infty$. 
Then $$Pos(K)=\cup_{n\geq 0}Pos(K_n)= \cup_{n\geq 0}T_{\mathcal N_n},$$ where $K_n=\cup_{i=-n}^n [\alpha_i,\beta_i] \cup (-\infty,\beta_{-(n+1)}] \cup [a_{n+1},\infty)$ is a basic closed semi-algebraic set and $\CN_n$ is the natural choice of generators of $K_n$.

\item $K=\{0\}\cup_{i\geq 1} \{\frac 1 i\}$. Then
$$Pos(K)=\cup_{n\geq 0} Pos(K_n) =\cup_{n\geq 0}T_{\mathcal N_n},$$ 
where $K_n=[0,\frac 1{n+1}]\cup_1^n\{\frac 1i\}$ is a basic closed semi-algebraic set and $\CN_n$ is the natural choice of generators of $K_n$.
\end{enumerate}

Note that $K$ is not a
semi-algebraic set and $Pos(K)$ is not finitely generated.
\end{theorem}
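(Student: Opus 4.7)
My plan is to follow the template of the proof of Theorem~\ref{t.2} (the Cantor set case): show that each $f \in Pos(K)$ is already non-negative on some basic closed semi-algebraic approximant $K_n$, and then invoke Theorem~\ref{1 thm} to conclude $f \in T_{\mathcal{N}_n}$. The inclusions $\cup_{n\geq 0} T_{\mathcal{N}_n} \subset \cup_{n\geq 0} Pos(K_n) \subset Pos(K)$ are immediate once one checks $K \subset K_n$ in each of the four scenarios (routine from the descriptions) and observes that $K_n = K_{\mathcal{N}_n}$; Theorem~\ref{1 thm} then delivers $Pos(K_n) = T_{\mathcal{N}_n}$, so the whole statement reduces to the reverse inclusion: for each $f \in Pos(K)$, locate an $n$ with $f \geq 0$ on $K_n$.

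The key input is that $f \in F[x]$ has only finitely many real zeros, so the negative set $N(f) := \{x \in \BR : f(x) < 0\}$ is a finite union of open intervals and is disjoint from $K$. In cases (1)--(3), the unboundedness of $K$ forces the leading coefficient of $f$ to be positive; when $K$ is unbounded in both directions (or when $[\alpha_0,\beta_0]=(-\infty,\beta_0]$ in case (1)), the parity of $\deg f$ is likewise forced to be even, else $f$ would be negative along an unbounded arm of $K$. Consequently $N(f)$ is bounded on the side(s) where $K$ extends to infinity. Pick $n$ large enough that $N(f) \subset (-\infty, \alpha_{n+1})$ in case (1), $N(f) \subset (\beta_{-(n+1)}, \infty)$ in case (2), and $N(f) \subset (\beta_{-(n+1)}, \alpha_{n+1})$ in case (3); then every point of $K_n$ in that range already lies in some $[\alpha_i, \beta_i] \subset K$, so $N(f) \cap K_n = \emptyset$, i.e.\ $f \in Pos(K_n)$.

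Case (4) is the only one requiring a continuity argument. From $f(1/i) \geq 0$ for all $i\geq 1$ and $f(0) \geq 0$, together with the finiteness of real zeros of $f$, one gets $f \geq 0$ on some $[0,\epsilon]$: otherwise $f<0$ on a right neighborhood of $0$, which would contradict $f(1/i)\geq 0$ for large $i$. Choosing $n$ with $1/(n+1)<\epsilon$ places $K_n = [0,1/(n+1)]\cup\{1,1/2,\ldots,1/n\}$ inside the non-negativity locus of $f$.

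For the final non-finite-generation claim, if $Pos(K)=T_S$ for some finite $S\subset F[x]$, then $T_S \subset Pos(K_S) \subset Pos(K) = T_S$, so $Pos(K_S)=Pos(K)$. A standard separation argument in $\BR$ (for any $x\in \BR\setminus K'$ of a closed $K'\subset \BR$, a linear or quadratic polynomial built from the endpoints of the gap of $K'$ containing $x$ lies in $Pos(K')$ yet is negative at $x$) then forces $K_S = K$. But $K_S$ is a basic closed semi-algebraic set, hence a finite union of intervals and isolated points, whereas $K$ has infinitely many connected components in each of the four cases, a contradiction. The main obstacles are the sign/parity analysis at infinity for cases (1)--(3) and the continuity-at-$0$ argument in case (4); everything else reduces to bookkeeping about which gaps of $K$ remain gaps of $K_n$.
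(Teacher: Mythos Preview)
Your proposal is correct and follows essentially the same approach as the paper: reduce to showing each $f\in Pos(K)$ lies in some $Pos(K_n)$, using that a nonzero polynomial has only finitely many real zeros (the paper phrases this as ``$f$ can oscillate only finitely many times''), and then invoke Theorem~\ref{1 thm}. Your treatment is in fact more detailed than the paper's, which gives only a one-line sketch; in particular you supply the non-finite-generation argument via $K_S=K$, which the paper asserts without proof.
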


\begin{proof}
    We only need to show that if $f\in Pos(K)$, then $f\in Pos(K_n)$ for some $n$. This follows from the fact that $f$ can oscillate only finitely many times, so eventually it will be strictly monotone on $K$. Thus $f\in Pos(K_n)$ for some $n$. Use (\ref{1 thm}) for $Pos(K_n)=T_{\mathcal N_n}$. $\hfill \square$
\end{proof}
\medskip

\begin{example} Let us write  
\begin{itemize} 
\item [$(n_1)$] $I_i=[2i,2i+1]$, 
\item [$(n_2)$] $t_i=(2i+1+\frac 12)$, 
\item [$(n_3)$] $f_i=(x-(2i+1))(x-(2i+2))$, 
\item [$(n_4)$] $g_i=  (x-(2i+1)) (x-t_i)$ and 
\item [$(n_5)$] $h_i=(x-t_i)(x-(2i+2))$.
\end{itemize}
Then  $(\ref{t.1})$ gives
\begin{eqnarray*}
   Pos\left(\cup_{i\geq 0} I_i\right) &=&  \cup_{n\geq 0} \left[Pos(\cup_0^n I_i\cup [2n+2,\infty))\right] 
 = \cup_{n\geq 0} T_{\{ x,f_0,\ldots,f_{n}\}} \\
Pos\left(\cup_{i\leq 0} I_i\right) &= & 
\cup_{n\leq 0} \left[Pos((-\infty,-2n-1]\cup_{-n}^0 I_i)\right] 
= \cup_{n\geq 0} T_{\{ 1-x,f_{-1},\ldots,f_{-(n+1)}\}}
\\
     Pos\left(\cup_{i\geq 0} \left(I_i\cup \left\{t_i\right\}\right)\right)
     & =&
      \cup_{n\geq 0} \left[ Pos\left(\cup_0^n (I_i\cup \{t_i\})\cup 
       [2n+2,\infty)   \right) \right] 
      =\cup_{n\geq 0} T_{\{ x,g_0,\ldots, g_n,h_0\ldots,h_{n}\}}
  \\
     Pos\left(\cup_{i\in \mathbb Z} I_i\right)  &=&
     \cup_{n\geq 0} \left[Pos\left(\cup_0^n (I_i\cup I_{-i})\cup (-\infty,-2n-1]\cup [2n+2,\infty)\right)\right] \\
 &=& \cup_{n\geq 0}  T_{\{f_{-n},\ldots,f_0,\ldots f_n\}}
\end{eqnarray*}
$\hfill \square$
\end{example}


\section{Analogues of Theorem \ref{1 thm} when $\partial K\not\subset F$}

Let $S=\{g_1,\ldots,g_s\} \subset F[x]$ and $K=K_S\subset \BR$ be a basic closed semi-algebraic set.
In $(\ref{1 thm})$, under the assumption that $\partial K\subset F$, we proved $Pos(K)=T_{\mathcal N}$, where $\CN$ is the natural choice of generators of $K$. In this section, we will discuss some cases when $\partial K\not\subset F$. Note that if $c\in K$ and $g_i(c)>0$ for all $i$, then $(c-\epsilon,c+\epsilon)\subset K$ and hence $c\not\in \partial K$. Thus,
if $c\in \partial K$, then there exist $g_i \in S$ such that $g_i(c)=0$. Therefore, all elements $c\in \partial K$ are algebraic over $F$. 

The following result generalizes (\ref{CM}).

\begin{lemma}\label{CM2}
 Let $a,b\in \BR$ with $(x-a)(x-b)\in F[x]$, i.e. $a+b,ab\in F$. Let $c,d\in F$ with $a\leq c<d\leq b$.  Then $(x-c)(x-d)\in T_S$, where $S=\{(x-a)(x-b)\}$.
\end{lemma}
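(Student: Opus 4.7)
My plan is to split into two cases depending on whether $a, b \in F$. If $a, b \in F$, then the conclusion is Lemma \ref{CM} verbatim. So I focus on the case $a, b \notin F$. I would first sharpen the hypothesis $a \leq c < d \leq b$ to $a < c < d < b$: indeed, if any of $a$ or $b$ equalled $c$ or $d$, then one of $a, b$ would lie in $F$, and then $a+b \in F$ would force the other in too, contradicting $a, b \notin F$.

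The strategy is then to produce a constant $t \in F$ with $0 < t < 1$ for which
\[
\sigma_t(x) \;:=\; (x-c)(x-d) \;-\; t\,(x-a)(x-b)
\]
is strictly positive on $\BR$. Granted such a $t$, Lemma \ref{sos} will give $\sigma_t \in \sum F_{\geq 0} F[x]^2$, and then
\[
(x-c)(x-d) \;=\; \sigma_t + t\,(x-a)(x-b) \;\in\; T_S,
\]
as required. Since the leading coefficient $1-t$ of $\sigma_t$ is positive, strict positivity of $\sigma_t$ on $\BR$ is equivalent to negativity of its discriminant in $x$; a direct expansion yields
\[
\Delta(t) \;=\; (b-a)^2\,t^2 \;-\; 2\,S'\,t \;+\; (d-c)^2, \qquad S' := (c-a)(b-d) + (d-a)(b-c) \;\in\; F.
\]

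Viewing $\Delta$ as a quadratic in $t$, the plan is to verify three algebraic facts by direct computation: (i) $S' - (b-a)(d-c) = 2(c-a)(b-d) > 0$, so $\Delta$ has two distinct real roots $t_- < t_+$ and $\Delta(t) < 0$ precisely on $(t_-, t_+)$; (ii) $t_- \, t_+ = (d-c)^2/(b-a)^2 \in (0, 1)$ and $t_- + t_+ = 2 S'/(b-a)^2 > 0$, whence $0 < t_- < 1$; and (iii) the perfect-square identity
\[
(b-a)^2 + (d-c)^2 - 2\,S' \;=\; (a + b - c - d)^2 \;\geq\; 0,
\]
which rearranges to $(1 - t_-)(1 - t_+) \geq 0$ and, combined with $t_- < 1$, forces $t_+ \leq 1$. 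Consequently $(t_-, t_+) \cap (0, 1)$ is a non-empty open subinterval of $\BR$, so it meets $\BQ \subset F$; picking any such $t$ finishes the proof.

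The main obstacle will be the identity in (iii): the bound $t_+ \leq 1$ is precisely what distinguishes this setting from the Berg--Maserick situation of Lemma \ref{CM}, and the perfect-square form of $(b-a)^2 + (d-c)^2 - 2S'$ is the algebraic input that makes it go through.
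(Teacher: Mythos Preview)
Your argument is correct. The discriminant computation checks out: $S'=(a+b)(c+d)-2ab-2cd\in F$, the factorisations $S'\mp(b-a)(d-c)=2(c-a)(b-d)$ and $2(d-a)(b-c)$ are both positive under $a<c<d<b$, and the identity in (iii) is exactly $((a+b)-(c+d))^2$. One small remark: step (iii) is not actually needed. Once you know $0<t_-<1$ and $t_-<t_+$, the interval $(t_-,\min(t_+,1))$ is already a non-empty open subset of $(0,1)$ on which $\Delta<0$, so you can pick your rational $t$ there without ever bounding $t_+$.

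The paper takes a different and somewhat shorter route. Rather than analysing the discriminant of $\sigma_t$ directly, it first normalises by the affine change $x\mapsto x+\tfrac{a+b}{2}$, reducing to the symmetric case $a=-\sqrt\alpha$, $b=\sqrt\alpha$ with $\alpha\in F_{>0}$. It then inserts an intermediate rational radius $r\in F$ with $|c|,|d|<r<\sqrt\alpha$ and proceeds in two steps: first $(x-c)(x-d)\in T_{\{x^2-r^2\}}$ by Lemma~\ref{CM} (since now the endpoints $\pm r$ lie in $F$), and second $x^2-r^2\in T_{\{x^2-\alpha\}}$ via the one-line identity $(x^2-r^2)-\tfrac{r^2}{\alpha}(x^2-\alpha)=(1-\tfrac{r^2}{\alpha})x^2$. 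Your approach has the virtue of being completely direct and coordinate-free, and it makes explicit exactly which $t$ work; the paper's approach trades that transparency for brevity by leaning on Lemma~\ref{CM} as a black box after symmetrising.
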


\begin{proof}
In case $a,b\in F$, we are done by (\ref{CM}). So we assume that $a,b\not\in F$.
Further, $$(x-a)(x-b)=\left(x-\frac{a+b}2\right)^2-\alpha \hspace{.2in}\text{with}\hspace{.2in} \alpha=\left(\frac{a+b}2\right)^2-ab\in F_{> 0}.$$
Changing the variable $x$ to $x+\frac {a+b}2$, we are reduced to the case that $a=-\sqrt \alpha$ and $b=\sqrt \alpha$ with $\alpha\in F_{>0}$.
 Thus we have $-\sqrt \alpha\leq c<d\leq \sqrt \alpha$ and we need to show that $(x-c)(x-d)\in T_{\{x^2-\alpha\}}$.

Note that if $c=-\sqrt \alpha$, then $d=\sqrt \alpha$, otherwise $(x-c)(x-d)\not\in F[x]$. So we assume $-\sqrt \alpha \neq c$ and $d\neq \sqrt \alpha$.
Choose $r\in F_{>0}$ such that $-\sqrt \alpha<-r<c<d<r<\sqrt \alpha$ and  $\frac {r^2}{\alpha}\in(0,1)\cap F_{>0}$.
Since $(x-c)(x-d)\in T_{\{x^2-r^2\}}$, by (\ref{CM}), it is enough to show that $x^2-r^2\in T_{\{x^2-\alpha\}}$.
Note 
$$g(x)=(x^2-r^2)-\frac {r^2}{\alpha}(x^2-\alpha)=\left(1-\frac {r^2}\alpha\right)x^2\geq 0\hspace{.1in} \text{on}\hspace{.1in} \BR.$$ Thus $g\in T_{\{1\}}$ and hence $x^2-r^2\in T_{\{x^2-\alpha\}}$.
$\hfill \square$
\end{proof}

\begin{theorem}\label{t4.1}
Let $S=\{g_1,\ldots,g_s\} \subset F[x]$ such that $K=K_S$ is non-compact. Assume $c\in \partial K$ with $[F(c):F]\geq 3$. Then 
$Pos(K)$ is not finitely generated.
\end{theorem}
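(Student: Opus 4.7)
The plan is a proof by contradiction: suppose $Pos(K) = T_{S'}$ for some finite $S' = \{h_1, \ldots, h_t\} \subset F[x]$. First, since $K$ is non-compact, after possibly replacing $x$ by $-x$, I may assume $[a, \infty) \subseteq K$ for some $a \in F$, so that every $h_i \in S'$ has positive leading coefficient. The assumption $Pos(K) = T_{S'}$ forces $K = K_{S'}$, for otherwise one can easily build a polynomial in $Pos(K)$ taking negative values on $K_{S'} \setminus K$, contradicting $T_{S'} \subseteq Pos(K_{S'})$. Let $m \in F[x]$ denote the minimal polynomial of $c$ over $F$, so $\deg m = n \geq 3$. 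Since any $h_i \in S'$ with $h_i(c) = 0$ must be divisible by $m$ and hence have degree at least $n$, any $h_i \in S'$ of degree at most $n-1$ satisfies $h_i(c) \neq 0$, and combined with $c \in K$ and $h_i \geq 0$ on $K$ this gives $h_i(c) > 0$ strictly. In particular, every linear and every quadratic $h_i \in S'$ is strictly positive at $c$.

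Next I treat the simplest case $c = \inf K$ (so $K \subseteq [c, \infty)$). For any $d \in F$ with $d < c$, the polynomial $x - d$ lies in $Pos(K)$. A leading-coefficient argument shows that any representation $x - d = \sum_e \sigma_e h^e$ in $T_{S'}$ can only involve terms of total degree at most $1$: each $\sigma_e$ and each $h^e$ has non-negative leading coefficient, so these cannot cancel to produce a polynomial of degree $\leq 1$ out of higher-degree pieces. Hence
\[ x - d \;=\; a_0 + \sum_j a_j \ell_j, \qquad a_0, a_j \in F_{\geq 0}, \]
with $\ell_j = \beta_j x + \gamma_j$ ranging over the linear elements of $S'$. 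Matching coefficients yields $\sum_j a_j \beta_j = 1$ and $a_0 + \sum_j a_j \gamma_j = -d$; setting $r_j = \gamma_j/\beta_j$ and $t_j = a_j \beta_j$, one gets $-d = a_0 + \sum_j t_j r_j \geq \min_j r_j$. But $\ell_j(c) > 0$ forces $r_j > -c$, so $\min_j r_j > -c$, i.e.\ $-\min_j r_j < c$. Since $-\min_j r_j \in F$ and $c \notin F$, density of $F$ in $\BR$ yields $d \in F$ with $-\min_j r_j < d < c$; for this $d$ the inequality $d \leq -\min_j r_j$ is violated, contradicting $x - d \in T_{S'}$.

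For the general case where $c$ is not the infimum of $K$, there is a gap of $K$ adjacent to $c$, and an analogous argument is carried out with the quadratic test polynomial $(x - d_1)(x - d_2)$ for $d_1 < d_2$ in $F$ chosen inside this gap (it lies in $Pos(K)$ because $(d_1, d_2) \cap K = \emptyset$). The same leading-coefficient argument restricts the representation to terms of total degree at most $2$. Evaluating at $c$ and projecting onto the $F$-basis $\{1, c, c^2\}$ of the degree-$\leq 2$ part of $F(c)$ (which is $F$-linearly independent precisely because $n \geq 3$) gives three scalar equations in $F$. Combining them with the Cauchy--Schwarz inequality $B^2 \leq 4AC$ governing the SOS contribution at $c$, together with the sign constraints coming from strict positivity of linear and quadratic elements of $S'$ at $c$, forces an inequality of the form $(d_1 - d_2)^2 \leq 0$, a contradiction for $d_1 \neq d_2$.

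The main obstacle lies in this general case: the contribution of quadratic generators $Q_k \in S'$ whose middle coefficients can have either sign requires careful book-keeping to ensure that the obstruction persists as $d_1, d_2$ both approach $c$ within the gap. The hypothesis $n \geq 3$ is used crucially twice --- first, to guarantee that no linear or quadratic polynomial in $F[x]$ vanishes at $c$, so that such elements of $S'$ contribute strictly positively at $c$; and second, to provide the $F$-linear independence of $\{1, c, c^2\}$ needed to decouple the coefficient matching at $c$ in the quadratic argument.
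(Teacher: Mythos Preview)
Your handling of the special case $c = \inf K$ is correct, but the general case is not completed: you sketch an approach via evaluation at $c$, projection onto the $F$-basis $\{1,c,c^2\}$, and a Cauchy--Schwarz bound $B^2 \le 4AC$ on the SOS part, but you never carry out the computation, and you yourself flag the ``main obstacle'' of controlling the middle coefficients of quadratic generators $Q_k \in S'$. That obstacle is genuine: the three coefficient equations you would obtain, combined with $B^2 \le 4AC$, do not obviously force $(d_1-d_2)^2 \le 0$, because the quadratic generators contribute terms whose individual coefficients carry no sign information --- only the value $Q_k(c)$ is constrained. As written, the general case is a plan, not a proof.

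The paper bypasses all of this with one clean idea: rather than evaluate at $c$, evaluate at a \emph{root} $r \in F$ of the test polynomial. Concretely, pick $r<s$ in $F$ inside the gap adjacent to $c$ and close enough to $c$ that every generator $f_i \in S'$ of degree $\le 2$ satisfies $f_i(r)>0$ (possible by continuity, since $[F(c):F]\ge 3$ guarantees $f_i(c)>0$). Set $p=(x-r)(x-s)\in Pos(K)$. Your leading-coefficient argument correctly restricts any representation to
\[
p \;=\; \sigma_0 \;+\; \sum_i a_i f_i \;+\; \sum_{j,k} a_{jk} f_j f_k,
\]
with $\sigma_0$ a weighted SOS of degree $\le 2$, the $a_i,a_{jk}\in F_{\ge 0}$ constants, the $f_i$ linear or quadratic, and the $f_j,f_k$ linear. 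Now $p(r)=0$, and every term on the right is $\ge 0$ at $r$ (since each $f_i(r)>0$ and $\sigma_0(r)\ge 0$); hence each term vanishes at $r$. This forces $a_i=a_{jk}=0$ and $\sigma_0(r)=0$. But $\sigma_0$ is a weighted sum of squares, so $(x-r)^2 \mid \sigma_0$; since $\deg \sigma_0 \le 2$ this gives $p=\sigma_0 = \lambda(x-r)^2$, contradicting $r\neq s$. No coefficient matching, no Cauchy--Schwarz, and no separate treatment of the case $c=\inf K$ are needed.
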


\begin{proof}
   Since $K$ is not compact, wlog, we assume $[a,\infty) \subset K_S$ for some $a\in F$.  Note any $f\in Pos(K)$ has positive leading coefficient. Assume  $Pos(K)= T_{\{f_1,\ldots, f_t,f_1',\ldots,f_{t'}'\}}$ is finitely generated with $f_1,\ldots,f_t$ of degree $\leq 2$. Then $f_i(c)>0$ for $i=1,\ldots,t$, since $[F(c):F]\geq 3$. Thus $f_i>0$ on  $(c-\epsilon,c+\epsilon)$ for $i=1,\ldots,t$. Note $c$ is not an isolated point of $K$.
   Choose  $r<s$ in $F\cap (c-\epsilon,c)$ (resp. $F\cap (c,c+\epsilon)$) if $c$ is a left (resp. right) boundary point of $K$.
  Let $p=(x-r)(x-s)\in Pos(K)$. Then 
  $$f= \sigma_0 + \sum \sigma_i f_i + \sum \sigma_{jk}f_j f_k $$
  where $f_i$'s are linear or quadratic and $f_j,f_k$'s are linear. Now $p(r)=0$ and $f_i(r)>0$ for $i=1,\ldots,t$. Thus $\sigma_0(r)=\sigma_i(r)=\sigma_{jk}(r)=0$. Thus $(x-r)^2$ divides $\sigma_0, \sigma_i, \sigma_{jk}$ in $F[x]$. Hence $(x-r)^2$ divides $p$, which is a contradiction.
Thus $Pos(K)$ is not finitely generated. 
$\hfill\square$
\end{proof}

\begin{proposition}\label{t4.4}
  Let $K=[\sqrt[3]2,\infty)=K_{ \{x^3-2\} }$. Then
    $Pos(K)= T= \cup_{n\geq 0} T_{\{x-r_n,x^3-2\}}$ in $\BQ[x]$, where $(r_n)$ is any increasing sequence in $\BQ$ converging to $\sqrt[3]2$. 
  \end{proposition}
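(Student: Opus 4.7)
The plan is to prove the two containments of $Pos(K) = \cup_{n \geq 0} T_{\{x - r_n, x^3 - 2\}}$ separately, using Theorem \ref{1 thm} applied to the rational-boundary set $[r_n, \infty)$; the key algebraic input is that any $\BQ$-polynomial vanishing at $c := \sqrt[3]{2}$ is divisible by the minimal polynomial $x^3 - 2$. The containment $\supseteq$ is immediate since $r_n < c$ forces both $x - r_n$ and $x^3 - 2$ to be non-negative on $K$, giving $T_{\{x - r_n, x^3 - 2\}} \subseteq Pos(K)$ for every $n$; in fact the union is directed, since $r_n < r_{n+1}$ implies $x - r_n = (x - r_{n+1}) + (r_{n+1} - r_n) \in T_{\{x - r_{n+1}, x^3 - 2\}}$.

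For the inclusion $\subseteq$, fix $f \in Pos(K)$ and factor $f = (x^3 - 2)^m h$ in $\BQ[x]$, choosing $m \geq 0$ maximal so that $(x^3 - 2) \nmid h$; irreducibility of $x^3 - 2$ over $\BQ$ then forces $h(c) \neq 0$. Since $(x^3 - 2)^m > 0$ on $(c, \infty)$ and $f \geq 0$ there, $h \geq 0$ on $(c, \infty)$, hence on $K$ by continuity, and so $h(c) > 0$. Continuity of $h$ therefore produces $\delta > 0$ with $h > 0$ on $[c - \delta, c]$, which combined with $h \geq 0$ on $K$ gives $h \in Pos([c - \delta, \infty))$. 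Then choose $n$ large enough that $r_n \in (c - \delta, c)$, so that $h \in Pos([r_n, \infty))$. The set $[r_n, \infty)$ has rational boundary $\{r_n\}$ with natural choice of generators $\{x - r_n\}$, so Theorem \ref{1 thm} applies and yields $h \in T_{\{x - r_n\}}$. Finally, $(x^3 - 2)^m$ is either a square or a square times $x^3 - 2$, hence lies in $T_{\{x^3 - 2\}}$; multiplying gives $f = (x^3 - 2)^m h \in T_{\{x - r_n, x^3 - 2\}}$, as required.

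Given Theorem \ref{1 thm}, the proof is essentially routine and presents no serious obstacle. The only conceptual point worth highlighting is the factorization of $f$ by the minimal polynomial of $c$: this isolates a factor $h$ that is strictly positive at $c$, thereby reducing the problem to a rational-boundary setting that Theorem \ref{1 thm} handles directly. Without this step, one would be stuck because $\partial K = \{c\} \not\subset \BQ$ precludes any direct appeal to Theorem \ref{1 thm}.
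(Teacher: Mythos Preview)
Your proof is correct and follows essentially the same approach as the paper's. The only structural difference is that you factor out the maximal power $(x^3-2)^m$ at once, whereas the paper peels off one factor of $x^3-2$ at a time and invokes induction on degree; both routes land on the same reduction to $Pos([r_n,\infty)) = T_{\{x-r_n\}}$ via Theorem~\ref{1 thm}.
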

\begin{proof}
By (\ref{t4.1}), $Pos(K)$ is not finitely generated.
Let $f\in Pos(K)$. If $f\geq 0$ on $\BR$, then $f\in \sum \BQ[x]^2$. Assume $f$ takes negative value at some point in $(-\infty,\sqrt[3]2)$.
If $f(\sqrt[3]2)=0$, then $f=(x^3-2)h$ with $h\geq 0$ on $K_S$. Thus by induction on degree, $h\in T$, hence $f\in T$.
Assume $f(\sqrt[3]2)>0$. For large $n$, $f\geq 0$ on $[r_n,\infty)$. Thus $f\in T_{\{x-r_n\}}\subset T$ and $Pos(K)=T$.
$\hfill \square$
\end{proof}

\begin{theorem}\label{t4.5}
Let $K\subset \BR$ be a finite union of closed intervals with $[F(c):F]\leq 2$ for all $c\in \partial K$. Assume if $[F(c):F]=2$ for some $c\in \partial K$ and $c<c'$ are conjugates, then $c'\in \partial K$ and 
\begin{enumerate} 
\item either $K=[c,c']$ and $-(x-c)(x-c')\in \CN$ 
\item or $(c,c')\cap K=\emptyset$ and $(x-c)(x-c')\in \CN$. 
\end{enumerate}
Then $Pos(K)=T_{\mathcal N}$, where $\CN$ is the natural choice of generators of $K$.
\end{theorem}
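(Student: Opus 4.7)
The plan is to adapt the inductive proof of Theorem \ref{1 thm}, substituting Lemma \ref{CM2} for Lemma \ref{CM} wherever conjugate pairs of boundary points arise. Induction proceeds on $\deg f$ for $f \in F[x] \cap Pos(K)$; the base case $\deg f = 0$ is trivial, and the case $f \in Pos(\BR)$ is handled by Lemma \ref{sos}.

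The key observation is that the only genuinely new configuration beyond those treated in Theorem \ref{1 thm} is when $K = [c,c']$ is a bounded connected interval with $c,c'$ a conjugate pair of degree $2$ over $F$, so that $\mathcal N = \{-(x-c)(x-c')\}$ consists of a single quadratic generator. Indeed, a connected unbounded $K$ has a unique finite boundary point which must lie in $F$ (a conjugate pair cannot occupy a single-point boundary), so Theorem \ref{1 thm} applies directly. For a disconnected $K$, the reductions and case analysis of Theorem \ref{1 thm}, particularly the factor-out step when $f$ vanishes at a boundary point and the shadow $G_N$ construction of (\ref{3.10}), carry over verbatim: Lemma \ref{CM2} supplies $(x-r)(x-s) \in T_{\{(x-a)(x-b)\}} \subseteq T_{\mathcal N}$ for $r,s \in F$ with $a \le r < s \le b$, regardless of whether $(x-a)(x-b) \in \mathcal N$ comes from a pair of rational boundary points or from a conjugate pair bounding a gap. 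Similarly, the reductions in case (\ref{2.1}) of Theorem \ref{1 thm}, in which $K$ has an extreme element in $F$, extend to the case where the extreme element is an isolated point forming a conjugate pair with another boundary point via (\ref{CM2}).

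The central task is thus to prove $Pos([c,c']) \cap F[x] = T_{\{-(x-c)(x-c')\}}$ in the connected conjugate-pair case. After shifting $u = x - (c+c')/2$ and setting $\gamma = ((c'-c)/2)^2 \in F_{>0}$, this becomes $Pos([-\sqrt\gamma,\sqrt\gamma]) \cap F[u] = T_{\{\gamma-u^2\}}^{F[u]}$. We induct on $\deg f$. If $f(\sqrt\gamma) = 0$ then $f(-\sqrt\gamma) = 0$ by Galois conjugacy, so $(u^2-\gamma) \mid f$ in $F[u]$; writing $f = (\gamma - u^2) H$ with $H \in F[u]$, a sign analysis on $K$ shows $H \geq 0$ there, and induction yields $H \in T_{\{\gamma-u^2\}}$, whence $f \in T_{\{\gamma-u^2\}}$. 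If $f$ admits a factorization $f = p \cdot h$ in $F[x]$ with $p$ an irreducible factor whose real roots all lie outside $K$, then $p$ has constant sign on $K$; replacing $p$ by $-p$ if necessary, both factors are in $Pos(K)$ of smaller degree and induction applies to each. Otherwise, decompose $f(u) = f_e(u^2) + u f_o(u^2)$ with $f_e, f_o \in F[v]$ where $v = u^2$, and apply Theorem \ref{1 thm} over $F[v]$ with $K_v = [0,\gamma]$ to the nonnegative polynomials $f_e(v)$ and $f_e(v)^2 - v f_o(v)^2 = f(u)f(-u)$, yielding representations in $T^{F[v]}_{\{v,\gamma-v\}}$ which must be reassembled back into a representation of $f(u)$.

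The main obstacle is this final reassembly when $f(\pm\sqrt\gamma) > 0$: while the even part $f_e(u^2)$ routinely lies in $T^{F[u]}_{\{\gamma-u^2\}}$ (substituting $v = u^2$ sends $vA(v)$ to $(uh(u))^2$-type sums of squares and sends $(\gamma - v)A(v)$ to $(\gamma-u^2)A(u^2)$), the odd contribution $u f_o(u^2)$ has no direct analog and must be captured via the norm identity $(f_e + u f_o)(f_e - u f_o) = f_e^2 - v f_o^2$. An alternative direct route mirrors the shadow construction of (\ref{3.10}): using that $f(\pm\sqrt\gamma) = A \pm B\sqrt\gamma$ with $A, B \in F$ and $A \geq |B|\sqrt\gamma$, construct the explicit quadratic $\sigma_0(u) = (A/(2\gamma))u^2 + Bu + A/2 \in \sum F_{\geq 0} F[u]^2$ matching $f$ at $\pm\sqrt\gamma$, factor $f - \sigma_0 = -(\gamma - u^2) H$, and deploy a rescaling argument analogous to the $\delta_N \in (0,1) \cap F$ perturbation of (\ref{3.10}) (with accompanying adjustment preserving the boundary-matching property) to force $H \leq 0$ on $K$ so that the inductive hypothesis applies to $-H$.
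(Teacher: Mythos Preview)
Your overall plan—reduce to the connected conjugate-pair interval and otherwise rerun Theorem~\ref{1 thm} with Lemma~\ref{CM2} replacing Lemma~\ref{CM}—is sound, and the paper follows the same architecture. However, the heart of the matter, the case $K=[-\sqrt\gamma,\sqrt\gamma]$ with $\sqrt\gamma\notin F$, is not actually proved in your proposal: both suggested routes stall.

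For the $v=u^2$ substitution you yourself flag the reassembly of the odd part $u f_o(u^2)$ as ``the main obstacle'' and do not resolve it. For the alternative $\sigma_0$ route, the difficulty is genuine: your quadratic $\sigma_0$ is forced to take the values $f(\pm\sqrt\gamma)$ at the endpoints, and these need not be small, so there is no reason $f-\sigma_0\ge 0$ on $K$. Any rescaling $\delta\sigma_0$ destroys the endpoint matching, hence the factorization by $u^2-\gamma$; the one-parameter family $\sigma_\lambda(u)=\lambda u^2+Bu+(A-\lambda\gamma)$ that \emph{does} preserve matching is a sum of squares only for $\lambda$ in an interval whose endpoints $(A\pm\sqrt{A^2-\gamma B^2})/(2\gamma)$ need not lie in $F$, and in any case there is no mechanism forcing $\sigma_\lambda\le f$ on $K$. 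The analogy with the $\delta_N$ argument of (\ref{3.10}) breaks down precisely because there one matches $f$ at rational points tending to \emph{zeros} of $f$, so the shadow can be made uniformly small on $K$; here you are matching at the irrational endpoints where $f$ may be large.

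The paper avoids all of this with a short computation you are missing: for $r\in F_{<0}$,
\[
\frac{1}{-2r}\Bigl[(\gamma-u^2)+(u-r)^2\Bigr]=u-\frac{r^2+\gamma}{2r},
\]
which shows directly that every linear $u-a$ with $a\in F$, $a<-\sqrt\gamma$, lies in $T_{\{\gamma-u^2\}}$ (and symmetrically $a-u$ for $a>\sqrt\gamma$). Once linear polynomials positive on $K$ are available in $T_{\{\gamma-u^2\}}$, the induction for $f>0$ on $K$ with a zero outside $K$ is immediate: pick $r\in F$ beyond that zero and a small linear $g\in F[u]$ with $g<f$ on $K$ and $g(r)=f(r)$; then $f-g=(u-r)h$ with $h\in Pos(K)$, and one inducts on $h$. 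This is the idea you should supply in place of the $\sigma_0$ construction.

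A smaller point: for disconnected $K$ with a conjugate-pair gap $(-\sqrt d,\sqrt d)$, the paper does not rerun the full $G_N$ machinery. When $f(\pm\sqrt d)\ne 0$ it simply chooses $r,s\in F$ with $-\sqrt d<r<c_1\le c_2<s<\sqrt d$ (where $c_1,c_2$ are the extreme zeros of $f$ in the gap), observes $f\in Pos\bigl((-\infty,r]\cup[s,\infty)\bigr)$, applies Theorem~\ref{1 thm} to this set with \emph{rational} boundary, and then uses Lemma~\ref{CM2} for the inclusion $T_{\{(x-r)(x-s)\}}\subset T_{\{x^2-d\}}$. This is considerably lighter than porting (\ref{3.10}) wholesale.
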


\begin{proof}
\fbox{{\it Case 1.}} Let $K=(-\infty,-\sqrt d] \cup [\sqrt d,\infty)=K_{\{x^2-d\}}$, where $d> 0$ and $\sqrt{d}\notin F$. Note that every $f\in Pos(K)$ has even degree with positive leading coefficient.
 If $f\geq 0$ on $\BR$, then $f\in T_{\{1\}}$, by $(\ref{sos})$. Assume $f$ takes negative value in $(-\sqrt{d}, \sqrt{d})$. 
 
 Let $c_1,c_2$ be the extreme zeros of $f$ in $(-\sqrt{d}, \sqrt{d})$. 
If $deg(f)=2$, then $f=\alpha (x-c_1)(x-c_2)$ with $\alpha\in F_{>0}$ and $c_1,c_2\in F$.
   By $(\ref{CM2})$, $f\in T_{\{x^2-d\}}$. 
   Assume $deg(f)\geq 4$. 

 If $f(\pm \sqrt{d})=0$, then $f=h(x^2-d)$ for some $h\in F[x]$, $deg(h)= n-2$ and $h\in Pos(K)$. By induction on degree, $h\in T_{\{x^2-d\}}$, hence $f\in T_{\{x^2-d\}}$. 
    
 Assume $f(\pm \sqrt d)\neq 0$.  Choose $r,s \in F$ such that $-\sqrt{d}<r <c_1<c_2<s< \sqrt{d}$. Then $f\in Pos((-\infty,r] \cup [s,\infty))$. By $(\ref{1 thm}, \ref{CM2})$, 
    $f\in T_{\{(x-r)(x-s)\}}\subset T_{\{x^2-d\}}$. Hence $f\in T_{\{x^2-d\}}$. 
\medskip

\fbox{{\it Case 2.}} Let  $K=[-\sqrt d,\sqrt d]=K_{\{d-x^2\}}$ with $d\in F_{>0}$ and $\sqrt d\not\in F$. 
Let  $a\in F$ with $a<-\sqrt d$. We will show  $x-a\in T_{\{d-x^2\}}$. 
For $r\in F_{<0}$, 
$$\frac 1{-2r}[(d-x^2)+(-x+r)^2]=x+\frac {r^2+d}{-2r} =x-a'\in T_{\{d-x^2\}}$$ 
Since $a'=\beta$ has solution $r=\beta \pm \sqrt{\beta^2-d}$, we can choose $\beta\in (-\sqrt d-\epsilon,-\sqrt d)$ such that $r\in F_{<0}$ and
$a'=\frac{r^2+d}{2r}\in (a,-\sqrt d) \cap F$. 
Now $x-a=(x-a')+(a'-a)\in  T_{\{d-x^2\}}$ as $a'-a>0$. 
Similarly, for $a\in F$ with $a>\sqrt d$,  one can show $-x+a\in T_{\{d-x^2\}}$ by considering the following equation for $r\in F_{>0}$
 $$\frac 1{2r}[(d-x^2)+(x-r)^2]=-x+\frac {r^2+d}{2r} \in T_{\{d-x^2\}}$$ 
Let $f\in Pos(K)$. 
\begin{enumerate}
\item If $f\geq 0$ on $\BR$, then $f\in T_{\{1\}}$, by (\ref{sos}).

\item If $f(\pm \sqrt d)=0$, then $f=(d-x^2)h$ with $h\in Pos(K)$. Induction on degree gives $h\in  T_{\{d-x^2\}}$, hence $f\in T_{\{d-x^2\}}$. 

\item If $f(a)=0$ for $a\in K^\circ$, then $f'(a)=0$. If $g$ is minimal polynomial of $a$ over $F$, then $f=g^2h$ with $h\in Pos(K)$. Induction on degree gives $h\in T_{\{d-x^2\}}$, hence $f\in T_{\{d-x^2\}}$. 

\item Assume $f>0$ on $K$ and $f$ takes negative value on $(-\infty,-\sqrt d)$. If $a<-\sqrt d$ is a zero of $f$ in $(-\infty,-\sqrt d)$, then we can find $ r\in F$, $r<a$ and $s\in F_{>0}$
 such that the linear polynomial $g=sx+s'\in F[x]$ satisfies $g < f$ on $K$ and $g(r)=f(r)$. Now $f-g= (x-r)h$ with $h\in Pos(K)$. Since $h\in T_{\{d-x^2\}}$ by induction on degree and $x-r\in T_{\{d-x^2\}}$ as shown above, we get $f\in T_{\{d-x^2\}}$. 

\item Proof is similar in case $f$ takes negative value in $(\sqrt d,\infty)$.
\end{enumerate}

\fbox{{\it Case 3.}} The proof in the general case is same as the proof of $(\ref{1 thm})$, using case $1,2$ and (\ref{CM2}). 
$\hfill \square$
\end{proof}
\medskip 

As a consequence of $(\ref{t4.5})$, we can write down some explicit examples.

\begin{example} In $\BQ[x]$,
\begin{enumerate}
\item $Pos([-\sqrt 2,\sqrt 2])=T_{\{2-x^2\}}$.

\item If $K=[-6,-4-\sqrt 2]\cup[-4+\sqrt 2,-\sqrt 2]\cup [\sqrt 2,2] \cup \{3\}\cup [4,\infty)$, then 
$Pos(K)=T_\mathcal N$, where $\CN=\{x+6,(x+4)^2-2,x^2-2,(x-2)(x-3),(x-3)(x-4)\}$. $\hfill \square$
\end{enumerate}
\end{example}

\begin{lemma}\label{t4.31}
Let $K=[0,\sqrt d]=K_{\{x,d-x^2\}}$. Then $Pos(K)=T_{\{x,d-x^2\}}$.
\end{lemma}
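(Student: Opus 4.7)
The plan is to induct on $\deg f$, closely following Case 2 of \ref{t4.5}, which handled the symmetric analog $K=[-\sqrt d,\sqrt d]$. The two ``linear building blocks'' I will need inside $T_{\{x, d-x^2\}}$ are: (i) for $r \in F$ with $r<0$, $x - r \in T_{\{x\}}$ trivially, since $x - r = x + (-r)$ with $-r \in F_{\geq 0}$; and (ii) for $r \in F$ with $r>\sqrt d$, $r - x \in T_{\{d-x^2\}}$, by exactly the computation used in \ref{t4.5}: from the identity $\frac{1}{2s}\bigl[(d-x^2) + (x-s)^2\bigr] = -x + \frac{s^2+d}{2s}$, pick $s \in F_{>0}$ in the non-empty open interval $(r-\sqrt{r^2-d},\,r+\sqrt{r^2-d})\cap F$, so that $\frac{s^2+d}{2s} \in (\sqrt d,\,r)\cap F$, and add the non-negative constant $r - \frac{s^2+d}{2s}$.

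For the inductive step with $\deg f \geq 1$, first I clear the easy reductions. If $f \geq 0$ on all of $\BR$, apply \ref{sos}. If $f(0)=0$, pull out a factor of $x$. If $f(\sqrt d)=0$, then since $x^2-d$ is the minimal polynomial of $\sqrt d$ over $F$, $f = -(d-x^2) h$ in $F[x]$; because $d-x^2 \geq 0$ on $K$ and $f \geq 0$ on $K$, continuity forces $-h \in Pos(K)$, and induction together with multiplication by the generator $d-x^2$ closes this subcase. If $f$ has an interior zero $c \in (0,\sqrt d)$, then $c$ is a local minimum, hence a root of multiplicity $\geq 2$; letting $g \in F[x]$ be its minimal polynomial and noting that every real conjugate of $c$ is also a zero of $f$ of the same multiplicity (by Galois action on $F[x]$), one gets $g^2 \mid f$, writes $f = g^2 h$ with $h \in Pos(K)$, and closes by induction since $g^2 \in \sum F[x]^2$.

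After these reductions, I may assume $f > 0$ on $K$, so by compactness $f \geq m$ on $K$ for some $m>0$, and $f$ must take a negative value somewhere in $(-\infty, 0) \cup (\sqrt d, \infty)$. \emph{Subcase A:} $f$ negative somewhere in $(-\infty, 0)$. Let $c$ be the largest such zero, so $f>0$ on $(c, 0]$; by density of $F$ in $\BR$ and continuity, pick $r \in (c,0) \cap F$ with $0 < f(r) < m$. Set $g(x) = f(r) \in F_{>0}$, a positive constant. Since $(f-g)(r)=0$, I get $f-g = (x-r) h$ with $h \in F[x]$; on $K$, both $x-r$ and $f-g$ are positive, so $h \in Pos(K)$. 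Induction gives $h \in T_{\{x,d-x^2\}}$, and by (i), $x - r \in T_{\{x\}} \subset T_{\{x,d-x^2\}}$, whence $f = g + (x-r) h \in T_{\{x,d-x^2\}}$. \emph{Subcase B:} $f$ negative somewhere in $(\sqrt d, \infty)$. Let $a$ be the smallest such zero, pick $r \in (\sqrt d, a) \cap F$ with $0 < f(r) < m$, and repeat the construction with $g(x) = f(r)$; the only change is that $x-r<0$ on $K$ now forces $-h \in Pos(K)$, and invoking (ii) gives $r-x \in T_{\{d-x^2\}}$, so $f = g + (r-x)(-h) \in T_{\{x,d-x^2\}}$.

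The main obstacle is identity (ii): exhibiting $r-x$ as an element of $T_{\{d-x^2\}}$ when $r \in F$ sits just beyond $\sqrt d$, despite $d-x^2$ vanishing at $\sqrt d$ itself. This is the only nontrivial algebraic ingredient and is already established inside the proof of \ref{t4.5}, so it can be invoked directly; everything else reduces to polynomial division, continuity, and careful sign-tracking at the asymmetric boundary $\{0, \sqrt d\}$ of $K$.
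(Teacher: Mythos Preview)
Your proof is correct and follows essentially the same approach as the paper, which simply says to follow the proof of (\ref{t4.5}) Case~2 and notes that $r-x\in T_{\{d-x^2\}}$ for $r\in F$, $r>\sqrt d$. Your only deviation is a mild simplification: where the template in (\ref{t4.5}) subtracts a genuine linear polynomial $g=sx+s'$ before factoring out $(x-r)$, you subtract the constant $g=f(r)$, which suffices here and is cleaner.
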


\begin{proof} Follow the proof of (\ref{t4.5}) case $2$.
For $r>\sqrt d$, $r\in F$, $x-r\in T_{\{d-x^2\}}$ and $Pos(K)=T_{\{x,d-x^2\}}$.
$\hfill\square$
\end{proof}

\begin{lemma}\label{t4.3}
Let $K=[\sqrt d,\infty)=K_{\{x^2-d,x\}}$ with $d\in F_{>0}$ and $\sqrt d\not\in F$. Then $Pos(K)$ is not finitely generated. 
\end{lemma}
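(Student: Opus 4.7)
\emph{Proof plan.} The strategy is to argue by contradiction. Assume $Pos(K)=T_S$ for some finite $S\subset F[x]$. After absorbing any constant generators into the sum-of-squares part, we may assume every element of $S$ is a non-constant polynomial in $Pos(K)$, hence (since $K\supset[\sqrt d,\infty)$) has positive leading coefficient. The polynomials I will test against $S$ are the linear
$$p_n:=x-r_n,$$
where $(r_n)\subset F$ is any increasing sequence with $r_n\nearrow\sqrt d$; such a sequence exists because $\mathbb Q\subset F$ is dense in $\mathbb R$, and each $p_n\in Pos(K)$ since $r_n<\sqrt d$.

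The first key step is a degree bound analogous to the one used in the proof of Theorem~\ref{t4.1}. In any preordering representation
$$p_n=\sum_{e\in\{0,1\}^t}\sigma_e g^e,\qquad \sigma_e\in\sum F_{\geq 0}F[x]^2,$$
every summand $\sigma_e g^e$ has non-negative leading coefficient (sums of squares with non-negative coefficients have non-negative leading coefficient, and products of polynomials with positive leading coefficient have positive leading coefficient). Hence no cancellation of top-degree terms can occur, so every contributing summand must have degree $\leq \deg(p_n)=1$. Combined with the facts that $\sigma_e$ has even degree and every non-constant $g_i$ has degree $\geq 1$, this forces each $\sigma_e$ to be a non-negative constant in $F$ and $g^e$ to be either $1$ or a single linear generator from $S$.

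Once this reduction is in hand, the finish is a one-line calculation. Write each linear $g_i\in S$ as $g_i=\alpha_i(x-c_i)$ with $\alpha_i\in F_{>0}$ and $c_i\in F$. The condition $g_i\geq 0$ on $K$ gives $c_i\leq\sqrt d$, which $\sqrt d\notin F$ upgrades to $c_i<\sqrt d$. Set $C:=\max_i c_i$, a fixed element of $F$ strictly less than $\sqrt d$. Matching coefficients in
$$x-r_n=\sigma_0+\sum_i a_i\alpha_i(x-c_i),\qquad \sigma_0,a_i\in F_{\geq 0},$$
gives $\sum_i a_i\alpha_i=1$ and $\sigma_0=\sum_i(a_i\alpha_i)c_i-r_n$. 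The right-hand side is a convex combination of the $c_i$ minus $r_n$, hence $\leq C-r_n$. Choosing $n$ large enough that $r_n>C$ forces $\sigma_0<0$, contradicting $\sigma_0\in F_{\geq 0}$. (If $S$ contains no linear generator at all, the representation collapses to $p_n=\sigma_0\in F_{\geq 0}$, which is absurd since $p_n$ is non-constant.)

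The main obstacle is the degree-bound step, because a preordering is not a graded object and there is no general reason for sums-of-squares representations to have controlled degrees. The essential input is the non-compactness of $K$: the presence of the unbounded ray $[\sqrt d,\infty)$ forces every non-constant element of $Pos(K)$ to have positive leading coefficient, which rules out cancellation at the top. Conceptually, this should be read as the $[F(\sqrt d):F]=2$ counterpart of Theorem~\ref{t4.1}: in that theorem one tests with quadratic polynomials $(x-r)(x-s)$ because no generator of degree $\leq 2$ can vanish at $c$, whereas here degree-two generators such as $x^2-d$ can vanish at $\sqrt d$, so one must descend to linear test polynomials $x-r_n$.
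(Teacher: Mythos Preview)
Your proof is correct and follows essentially the same approach as the paper's: test with linear polynomials $x-r$ for $r\in F$ close to $\sqrt d$, use the positive-leading-coefficient observation to force a degree bound, and obtain a contradiction from the fact that the roots of the linear generators are bounded strictly below $\sqrt d$. The only cosmetic difference is that the paper finishes by evaluating the representation at $x=r$, whereas you match coefficients; your version is in fact cleaner, since you keep track of the constant term $\sigma_0$ explicitly.
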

\begin{proof}
      Let $f_1,\ldots,f_s\in Pos(K)$. Let $x-r\in F[x]\cap Pos(K)$ with $r<\sqrt d$.
    If $x-r=c_1f_1+\ldots+c_sf_s$ with $c_i\in F_{>0}$, then all $f_i$ are linear (note every $f\in Pos(K)$ has positive leading coefficient). Now choosing $r$ with further property that $r<r_i$, where $r_i$ are roots of $f_i$, we get $f_i(r)>0$. This contradicts that $0=\sum c_if_i(r)$. Thus, $Pos(K)$ is not finitely generated.
    $\hfill \square$
\end {proof}

\medskip

In view of (\ref{t4.1}, \ref{t4.5}), we can ask the following question.

\begin{question}\label{Qn} 
Is $Pos(K)$  a finitely generated preordering of $\BQ[x]$ in the following compact cases with $\partial K \not\subset \BQ$?
\begin{enumerate} 
\item Let $K=[0, 2^{1/3}]=K_{\{x,2-x^3\}}$.

\item $K=[\sqrt 2,\sqrt 3]=K_{\{x,x^2-2,3-x^2\}}$. Does $x-1\in T_{\{x,x^2-2,3-x^2\}}$?

\item $K=[-\sqrt[4] 2,\sqrt[4] 2]=K_{\{2-x^{4}\}}$. Does $x-\alpha\in T_{\{2-x^4\}}$ for $\alpha\in \BQ\cap (\sqrt[4]2,\sqrt2 )$?
\end{enumerate}
\end{question}

\section{A question of Powers}

In this section we give a partial answer to a question of Powers \cite{Vict2}. 

Let $S=\{g_1, \ldots, g_s\}\subset {\mathbb{Q}}[x_1,\ldots,x_n]$ such that $M_S\otimes \BR$ is  archimedean, i.e. $N-\sum_{i=1}^{n} x_i^2\in M_S\otimes \BR$ for some $N\in \mathbb{N}$. Powers asked \cite{Vict2}  whether $M_S$ is archimedean.  We will prove this in  case $\dim({\mathbb{Q}[x_1,\cdots,x_n]}/(M_S\cap -M_S))=0$.
For the definition of ordering, real spectrum $Sper(A,S)$ and semi-ordering, see \cite[Section 2.4, 2.5, 5.3]{Mur1}.

Following result is from  \cite[Thm 5.3.2]{Mur1}.

\begin{theorem}\label{Semi-order}
    Let $M\subset A:=\mathbb Q[x_1,\ldots,x_n] $ be a quadratic module and $a \in A$. Then  $a\in Q\setminus -Q$ for all semi-ordering $Q$ of $A$ containing $M$  if and only if $-1 \in M - \sum A^2a$.
\end{theorem}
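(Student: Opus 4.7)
The plan is to use a Zorn's lemma/Positivstellensatz style argument, where the easy direction follows from the defining closure properties of a semi-ordering and the hard direction uses that maximal proper quadratic modules are semi-orderings.

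For the direction $(\Leftarrow)$, suppose $-1 = m - \sigma a$ with $m \in M$ and $\sigma \in \sum A^2$, and let $Q$ be any semi-ordering of $A$ containing $M$. I want to show $a \in Q$ and $-a \notin Q$. Assume for contradiction that $-a \in Q$. Since semi-orderings are closed under multiplication by sums of squares, $\sigma(-a) = -\sigma a \in Q$. But $-\sigma a = -(1+m) = -1 - m$, so $-1 - m \in Q$, and adding $m \in Q$ gives $-1 \in Q$, contradicting that $Q$ is proper. Hence $-a \notin Q$, and by the totality property $Q \cup -Q = A$ of semi-orderings, $a \in Q$. Thus $a \in Q \setminus -Q$.

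For the direction $(\Rightarrow)$, I argue the contrapositive. Suppose $-1 \notin M - \sum A^2 a$. First, I verify that $M' := M - \sum A^2 a = \{m - \sigma a : m \in M,\ \sigma \in \sum A^2\}$ is a quadratic module: it contains $0$ and every square (via $r^2 = r^2 - 0\cdot a$), is closed under addition, and is closed under multiplication by sums of squares. By assumption $-1 \notin M'$, so $M'$ is proper. By Zorn's lemma, extend $M'$ to a maximal proper quadratic module $Q$. The standard structural lemma that every maximal proper quadratic module in a commutative ring is a semi-ordering (Marshall's book, cf.\ also the reference \cite{Mur1}) then tells me that $Q$ is a semi-ordering. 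Since $M \subseteq M' \subseteq Q$, the hypothesis gives $a \in Q \setminus -Q$, so in particular $-a \notin Q$. But taking $m = 0$ and $\sigma = 1$ shows $-a = 0 - 1\cdot a \in M' \subseteq Q$, a contradiction.

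The main obstacle is the structural lemma that maximal proper quadratic modules are automatically semi-orderings, i.e.\ that $Q \cup -Q = A$ whenever $Q$ is proper-maximal. This is the heart of why the argument works, and its proof requires a short but non-trivial computation: if both $a \notin Q$ and $-a \notin Q$, then maximality forces $-1 \in Q + \sum A^2 a$ and $-1 \in Q - \sum A^2 a$, and a multiplication-and-addition trick on these two relations produces $-1 \in Q$. Once this lemma is in hand (it is standard and is exactly the fact I would cite from Marshall), the rest of the proof is the clean Zorn's lemma argument above.
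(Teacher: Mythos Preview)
Your argument is correct and is precisely the standard Zorn's lemma / maximal quadratic module argument behind this Positivstellensatz-type statement. Note, however, that the paper does not actually prove this theorem: it is stated with the sentence ``Following result is from \cite[Thm 5.3.2]{Mur1}'' and no proof is given. So there is nothing to compare your approach against; you have supplied the proof that the paper merely cites. The structural lemma you isolate (maximal proper quadratic modules are semi-orderings) is exactly the ingredient Marshall uses, and your handling of both directions is clean.
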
 

The following result is from \cite[Ex 5.5.2]{Prest}.

\begin{lemma}\label{order}
Let $F\subset \mathbb R$ be an algebraic extension of $\mathbb Q$. Then every semi-ordering on $F$ is an ordering. 
\end{lemma}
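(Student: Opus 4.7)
The plan is to reduce to the case where $F$ is a number field, and then pin down all semi-orderings in terms of the real embeddings. Let $P$ be a semi-ordering on $F$; the task is to show $P \cdot P \subseteq P$. Writing $F$ as the directed union of its finite subextensions $F_\alpha$ (each a number field inside $\BR$) and noting that $P \cap F_\alpha$ is a semi-ordering of $F_\alpha$, it suffices to prove the result when $F$ is itself a number field. Fix such $F$ with real embeddings $\sigma_1, \dots, \sigma_r$, one of which is the given inclusion $F \hookrightarrow \BR$.

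Next define $\chi_P : F^\times / F^{\times 2} \to \{\pm 1\}$ by $\chi_P(a) = 1$ iff $a \in P$; this is well-defined because $a^2 P \subseteq P$. Since $P$ contains every square and is closed under addition, it contains every nonzero sum of squares. By Siegel's classical theorem, every totally positive element of $F$ (positive under every $\sigma_i$) is a sum of squares in $F$, and hence lies in $P$. So $\chi_P$ is trivial on the kernel of the total signature map $\rho : F^\times / F^{\times 2} \to \{\pm 1\}^r$ defined by $\rho(a) = (\mathrm{sign}\,\sigma_1(a), \dots, \mathrm{sign}\,\sigma_r(a))$; and by real weak approximation $\rho$ is surjective. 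Hence $\chi_P = \psi \circ \rho$ for a uniquely determined $\psi : \{\pm 1\}^r \to \{\pm 1\}$.

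It remains to show $\psi$ is a coordinate projection. The semi-ordering axioms translate to: $\psi(+,\dots,+) = 1$, $\psi(-\epsilon) = -\psi(\epsilon)$, and an additivity condition --- if $\psi(\epsilon) = \psi(\epsilon') = 1$ and $\epsilon''$ satisfies $\epsilon''_i = \epsilon_i$ at every coordinate $i$ with $\epsilon_i = \epsilon'_i$, then $\psi(\epsilon'') = 1$ --- where real weak approximation is used to realize every such $\epsilon''$ as $\rho(a+b)$ for suitable $a,b$ with $\rho(a) = \epsilon$, $\rho(b) = \epsilon'$. Set $E = \psi^{-1}(1)$ (which has size $2^{r-1}$ by the parity condition), write $\epsilon^{(i)}$ for the sign vector with a single $-1$ at position $i$, and let $S = \{i \in \{1,\ldots,r\} : \epsilon^{(i)} \in E\}$. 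Applying the additivity condition to pairs $(\epsilon^{(i)}, \epsilon^{(j)})$ with $i,j \in S$, and to pairs $(-\epsilon^{(j)}, -\epsilon^{(k)})$ with $j,k \notin S$, together with the parity constraint and $|E| = 2^{r-1}$, a case analysis on $|S|$ forces $|S| = r - 1$ and then $E = \{\epsilon : \epsilon_{i_0} = +1\}$ for the unique $i_0 \in \{1,\ldots,r\} \setminus S$. Thus $\psi$ is the projection onto coordinate $i_0$, hence a group homomorphism, so $\chi_P$ is multiplicative and $P$ is the ordering pulled back from $\sigma_{i_0}$.

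The main obstacle is this last combinatorial step --- ruling out all non-projection $\psi$ compatible with the axioms. The bridge between the algebraic hypothesis (semi-ordering) and the archimedean combinatorics on $\{\pm 1\}^r$ is provided by Siegel's theorem, while real weak approximation is what gives the additivity condition enough freedom to force the required rigidity.
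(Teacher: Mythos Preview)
The paper does not give a proof of this lemma; it simply cites Prestel--Delzell, Exercise~5.5.2. Your argument is therefore a genuine addition rather than a comparison, and it is essentially correct.

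The reduction to number fields, the use of Siegel's theorem to show $\chi_P$ kills totally positive elements, and the factorization through $\rho$ via real weak approximation are all fine. The one place that is too compressed is the final combinatorial step. Your treatment of $|S|\le r-2$ (pairing $-\epsilon^{(j)},-\epsilon^{(k)}$ for $j,k\notin S$ forces $(-,\dots,-)\in E$) is correct. But for $|S|=r$ the single application to pairs $(\epsilon^{(i)},\epsilon^{(j)})$ only yields $\epsilon^{(i,j)}\in E$, and for $r\ge 5$ the resulting count $1+r+\binom{r}{2}$ does not yet exceed $2^{r-1}$; you must \emph{iterate} (pair $\epsilon^{(I)}$ with $\epsilon^{(k)}$ to get $\epsilon^{(I\cup\{k\})}\in E$) until you reach $(-,\dots,-)\in E$. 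The same iteration is needed after $|S|=r-1$ to obtain all of $\{\epsilon:\epsilon_{i_0}=+1\}\subseteq E$, whereupon $|E|=2^{r-1}$ gives equality. A slicker packaging: your additivity condition says precisely that $E=\psi^{-1}(1)$ is \emph{interval-convex} in the Hamming cube, and a two-line argument shows every nonempty interval-convex subset of $\{\pm 1\}^r$ is a subcube; then $|E|=2^{r-1}$ and $(+\cdots+)\in E$ force $E=\{\epsilon:\epsilon_{i_0}=+1\}$ immediately.

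For context, the route more commonly taken (and presumably the one Prestel--Delzell have in mind) is to show that any semi-ordering on an algebraic extension of $\BQ$ is archimedean, and that archimedean semi-orderings on fields are orderings. Your approach via Siegel's theorem and weak approximation is a pleasant alternative that makes the link to the real places explicit.
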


The following proof is inspired from W\"{o}rmann's trick \cite{Wor}.

\begin{theorem}\label{quad in rp}
Let $S=\{g_1, \ldots, g_s\}\subset A:=\mathbb Q[x_1,\ldots,x_n]$ such that
$M_S\otimes \BR$ is  archimedean and  $\dim (A/(M_S\cap -M_S))=0$. Then $M_S$ is archimedean.
\end{theorem}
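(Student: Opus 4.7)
The plan is to exploit the zero-dimensionality hypothesis to reduce every individual semi-ordering containing $M_S$ to an ordering on a number field, and then apply a semi-ordering criterion for archimedeanness.

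First, I would invoke the semi-ordering criterion (Prestel--Marshall): a quadratic module $M\subseteq A$ is archimedean if and only if every semi-ordering $Q$ of $A$ containing $M$ is archimedean, i.e.\ $\forall a\in A,\ \exists N\in\BN,\ N\pm a\in Q$. This equivalence is standard in real algebra (cf.\ \cite[Chapter 5]{Mur1}) and is derivable from Theorem \ref{Semi-order} together with Tychonoff-compactness of the space of semi-orderings extending $M_S$. Granting this, the problem reduces to showing that every semi-ordering $Q\supseteq M_S$ is archimedean.

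Second, fix such a $Q$ and let $\pri:=Q\cap -Q$ be its support. This is a prime ideal of $A$ containing $M_S\cap -M_S$, so $\dim A/\pri\leq \dim A/(M_S\cap -M_S)=0$. Hence $A/\pri$ is a finitely generated $\BQ$-algebra that is an integral domain of Krull dimension zero; by Zariski's lemma (a form of the Nullstellensatz), $A/\pri$ is a finite algebraic extension $F/\BQ$. The induced semi-ordering $\bar Q:=Q/\pri$ on $F$ is an ordering by Lemma \ref{order}. Since $(F,\bar Q)$ is an ordered number field, it embeds in $(\BR,\leq)$ and is archimedean: for every $a\in A$ with image $\bar a\in F$ there exists $N\in\BN$ with $N\pm\bar a\in\bar Q$, and lifting gives $N\pm a\in Q+\pri=Q$. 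Thus $Q$ is archimedean, and by the first step $M_S$ is archimedean.

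The main obstacle is the Prestel--Marshall reduction in the first step. The forward implication ($M$ archimedean $\Rightarrow$ every $Q\supseteq M$ is archimedean) is immediate, but the converse requires combining Theorem \ref{Semi-order} (applied to $a=N+1\pm x_i$) with compactness of the space of semi-orderings extending $M_S$ to obtain a uniform bound $N\in\BN$, followed by a Wörmann--Jacobi-type identity \cite{Wor} to pass from $-1\in M_S-\sum A^2(N+1\pm a)$ to an actual bound $N'\pm a\in M_S$. The remaining steps are a routine application of Zariski's lemma together with the author-recalled Lemma \ref{order}.
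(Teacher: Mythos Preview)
Your strategy shares the core of the paper's proof: both exploit the zero-dimensionality to show that every semi-ordering $Q\supseteq M_S$ has support a maximal ideal whose residue field is a number field, so by Lemma~\ref{order} the induced semi-ordering is an ordering (hence archimedean). The difference lies in the endgame. The paper does not invoke a black-box ``all semi-orderings archimedean $\Rightarrow$ $M$ archimedean'' criterion; instead it fixes the single element $a=N-\sum x_i^2$, shows $a>0$ on every \emph{ordering} via rational Schm\"udgen (Theorem~\ref{Pow thm1}, which is where the hypothesis $M_S\otimes\BR$ archimedean enters), observes that every semi-ordering is an ordering, applies Theorem~\ref{Semi-order} to get $pa\in 1+M_S$, and then carries out W\"ormann's trick explicitly. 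Your route replaces the appeal to rational Schm\"udgen by a compactness argument---and in fact, since $A/(M_S\cap -M_S)$ is Artinian there are only \emph{finitely many} semi-orderings, so a uniform $N$ exists trivially. This is valid and, interestingly, shows that the hypothesis ``$M_S\otimes\BR$ archimedean'' is redundant once $\dim A/(M_S\cap -M_S)=0$.

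There is, however, a gap in your sketched endgame. You propose applying Theorem~\ref{Semi-order} to the elements $N+1\pm x_i$ and then invoking a ``W\"ormann--Jacobi-type identity'' to pass to $N'\pm x_i\in M_S$. But W\"ormann's trick (as executed in the paper) needs $M_{\{a\}}$ to be archimedean, and $M_{\{N+1\pm x_i\}}$ is not: it bounds $x_i$ on one side only and says nothing about the other variables, so the step ``choose $m$ with $m-q\in M_{\{a\}}$'' fails. Combining the $2n$ separate certificates by multiplication does not work either, since products of elements of $M_S$ need not lie in $M_S$. The fix is exactly the paper's choice: apply Theorem~\ref{Semi-order} to $a=N-\sum_i x_i^2$ (your finiteness argument gives a uniform $N$ with $a>_Q 0$ for all $Q$), after which W\"ormann's trick goes through verbatim because $M_{\{N-\sum x_i^2\}}$ \emph{is} archimedean. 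With this correction your argument is complete and arguably slightly cleaner than the paper's, since it avoids Theorem~\ref{Pow thm1} altogether.
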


\begin{proof}
    Since $M_S\otimes \BR$ is  archimedean, $K_S$ is compact. Choose $N\in \mathbb N$ such that if
$$a:= N-\sum_{i=1}^ n x_i ^2$$
then $a> 1$ on $K_S.$ Thus $a-1>0$ on $K_S$.  By  $(\ref{Pow thm1})$, $a-1\in T_S$, so $a\in 1+T_S$. Therefore, $a>0 $ on    $Sper(A,M_S)$, i.e. $a\in Q\setminus -Q$ for all ordering $Q$ of $A$ containing $M_S$. $\hfill (\star)$

For any semi-ordering $Q$ of $A$ containing $M_S$, $Q\cap -Q$ is a prime ideal of $A$ containing $M_S \cap -M_S$. Since $\dim(A/(M_S \cap -M_S))=0$, we get  $Q\cap -Q$ is a maximal ideal. Therefore  $L=A/(Q\cap -Q)$  is an algebraic extension of $\mathbb Q$. By (\ref{order}), the extension  of semi-ordering $Q$ is an ordering $\overline Q$ in $A/(Q\cap -Q)$. Since the contraction of an ordering is an ordering \cite{Mur1} and $Q$  is the contraction of $\overline{Q}$, we get $Q$  is an ordering of $A$.

      By $(\star)$ and (\ref{Semi-order}),  $-1\in M_S-\sum A^2 a$. Thus,  $1+q=pa$ for some
 $q\in M_S$ and  $p\in \sum A^{2}$.
 So $$
(1+q)a = pa^2 \in \sum A^2.$$ Rest of the proof is similar to W\"{o}rmann's (see \cite[Thm 6.1.1]{Mur1}. We will complete the proof below.

Since $M_{\{a\}}$ is archimedean, there exists  $m \in \BN$ such that $m -q \in M_{\{a\}}$. Then $m -q = t_1 + at_2$ for some $t_1, t_2 \in \sum A^{2}$. So
\[
(m -q)(1 + q) = t_1(1 + q) + at_2(pa) =(t_1+t_2pa^2)+t_1q\in M_S
\]
and
\[
m + \frac{m^2}{4} - q = (m - q)(1 + q) + \left(\frac{m}{2} - q\right)^2 \in M_S.
\]
Thus, noting $q\sum x_i^2\in M_S$, we get
\[N\left(m + \frac{m^2}{4} - q\right)+(1+q)a+q\sum x_i^2 =
N\left(\frac{m}{2} + 1\right)^2 - \sum_{i=1}^n x_i^2 \in M_S.
\]
Hence, $M_S$ is archimedean. $\hfill \square$
\end{proof}

\medskip

\noindent\textbf{Acknowledgement:} 
The second author would like to thank Council of Scientific and Industrial Research India for the fellowship  and the third author would like to thank  National Board of Higher Mathematics India for the fellowship.

\bibliographystyle{amsplain}

\Addresses

\end{document}